\newcommand{\nc}{\newcommand}
\newtheorem{thrm}{Theorem}[section]
\newtheorem{prop}[thrm]{Proposition}
\newtheorem{lemma}[thrm]{Lemma}
\theoremstyle{definition}
\newtheorem{conj}{Conjecture}
\newtheorem{defn}{Definition}
\newtheorem{exam}{Example}
\theoremstyle{remark}
\newtheorem{rmk}{Remark}
\newcommand{\rmkend}{\ensuremath{\diameter}}
\newcommand{\examend}{\ensuremath{\diameter}}
\newcommand{\defnend}{\ensuremath{\diameter}}
\nc{\ot}{\otimes}
\nc{\mc}{\mathcal}
\nc{\wt}{\widetilde}
\newcommand*\rel@kern[1]{\kern#1\dimexpr\macc@kerna}
\newcommand*\wb[1]{%
  \begingroup
  \def\mathaccent##1##2{%
    \rel@kern{0.8}%
    \overline{\rel@kern{-0.8}\macc@nucleus\rel@kern{0.2}}%
    \rel@kern{-0.2}%
  }%
  \macc@depth\@ne
  \let\math@bgroup\@empty \let\math@egroup\macc@set@skewchar
  \mathsurround\z@ \frozen@everymath{\mathgroup\macc@group\relax}%
  \macc@set@skewchar\relax
  \let\mathaccentV\macc@nested@a
  \macc@nested@a\relax111{#1}%
  \endgroup
}
\nc{\al}{\alpha}
\nc{\ga}{\gamma}
\nc{\del}{\delta}
\nc{\la}{\lambda}
\nc{\om}{\omega}
\nc{\si}{\sigma}
\nc{\ze}{\zeta}
\nc{\Ga}{\Gamma}
\nc{\Del}{\Delta}
\nc{\Si}{\Sigma}
\nc{\Z}{\mathbb{Z}}
\nc{\R}{\mathbb{R}}
\nc{\C}{\mathbb{C}}
\nc{\K}{\mathbb{K}}
\nc{\mfg}{\mathfrak{g}}
\nc{\mfh}{\mathfrak{h}}
\nc{\mfk}{\mathfrak{k}}
\nc{\mfn}{\mathfrak{n}}
\nc{\mfz}{\mathfrak{z}}
\nc{\ad}{{\rm ad}}
\nc{\id}{\mathrm{id}}
\nc{\Ad}{{\rm Ad}}
\nc{\Aut}{{\rm Aut}}
\nc{\Autinv}{{\rm Aut}^{\rm inv}}
\nc{\CD}{\mathrm{CDec}}
\nc{\End}{\mathrm{End}}
\nc{\GL}{\mathrm{GL}}
\nc{\GSat}{\mathrm{GSat}}
\nc{\Hom}{\mathrm{Hom}}
\nc{\Id}{\mathrm{Id}}
\nc{\Sat}{\mathrm{Sat}}
\nc{\Sp}{\mathrm{Sp}}
\nc{\WSat}{\mathrm{WSat}}
\nc{\Idiff}{I_{\rm diff}}
\nc{\Ins}{I_{\rm ns}}
\nc{\Insf}{I_{\rm nsf}}
\nc{\mfgl}{\mathfrak{gl}}
\nc{\mfsl}{\mathfrak{sl}}
\nc{\mfso}{\mathfrak{so}}
\nc{\mfsp}{\mathfrak{sp}}
\nc{\qu}{\quad}
\nc{\qq}{\qquad}
\nc{\eq}[1]{\begin{equation}#1\end{equation}}
\nc{\eqa}[1]{\begin{equation}\begin{alignedat}{50}#1\end{alignedat}\end{equation}}
\nc{\eqn}[1]{\begin{equation*}\begin{alignedat}{50}#1\end{alignedat}\end{equation*}}
\nc{\eqg}[1]{\begin{equation}\begin{gathered}#1\end{gathered}\end{equation}}
\nc{\ali}[1]{\begin{alignat}{50}#1\end{alignat}}
\nc{\als}[1]{\begin{subequations}\begin{alignat}{50}#1\end{alignat}\end{subequations}}
\nc{\aln}[1]{\begin{alignat*}{50}#1\end{alignat*}}
\nc{\gat}[1]{\begin{gather}#1\end{gather}}
\nc{\gas}[1]{\begin{subequations}\begin{gather}#1\end{gather}\end{subequations}}
\nc{\gan}[1]{\begin{gather*}#1\end{gather*}}
\nc{\eqrefs}[2]{\text{(\ref{#1}-\ref{#2})}}
\numberwithin{equation}{section}
\nc{\red}{\color{red}}
\nc{\blu}{\color{blue}}
\nc{\br}{\color{brown}}
\nc{\grn}{\color{green!55!black}}
\nc{\gry}{\color{gray}}
\nc\nonweak{\begin{tikzpicture}[baseline=-0.25em,line width=0.7pt,scale=75/100]
\draw[double,->] (.9,0) -- (.95,0);
\draw[line width=.5pt] (.5,0) -- (1,0);
\draw[line width=.5pt] (.5,0.05) -- (1,0.05);
\draw[line width=.5pt] (.5,-0.05) -- (1,-0.05);
\filldraw[fill=black] (.5,0) circle (.1);
\filldraw[fill=white] (1,0) circle (.1);
\end{tikzpicture}}
\nc\weakmin{\begin{tikzpicture}[baseline=-0.25em,line width=0.7pt,scale=75/100]
\draw[double,<-] (4.05,0) --  (4.4,0);
\filldraw[fill=white] (4,0) circle (.1);
\filldraw[fill=black] (4.5,0) circle (.1);
\end{tikzpicture}}
\begin{document}

\title[Quasitriangular coideal subalgebras of $U_q(\mfg)$ in terms of generalized Satake diagrams]{Quasitriangular coideal subalgebras of $U_q(\mfg)$ \\ in terms of generalized Satake diagrams}

\author{Vidas Regelskis}
\address{Department of Mathematics, University of York, York, YO10 5DD, UK and \newline \mbox{\hspace{.35cm}} Institute of Theoretical Physics and Astronomy, Vilnius University, Saul\.etekio av.~3, Vilnius 10257, Lithuania.}
\email{vidas.regelskis@gmail.com}

\author{Bart Vlaar}
\address{
Department of Mathematics, University of York, York, YO10 5DD, UK and \newline \mbox{\hspace{.31cm}} Department of Mathematics, Heriot-Watt University, Edinburgh, EH14 4AS, UK}
\email{b.vlaar@hw.ac.uk}

\begin{abstract} 
Let $\mfg$ be a finite-dimensional semisimple complex Lie algebra and $\theta$ an involutive automorphism of $\mfg$.
According to G.~Letzter, S.~Kolb and M.~Balagovi\'c the fixed-point subalgebra $\mfk = \mfg^\theta$ has a quantum counterpart $B$, a coideal subalgebra of the Drinfeld-Jimbo quantum group $U_q(\mfg)$ possessing a universal K-matrix $\mc{K}$.
The objects $\theta$, $\mfk$, $B$ and $\mc{K}$ can all be described in terms of Satake diagrams.
In the present work we extend this construction to generalized Satake diagrams, combinatorial data first considered by A.~Heck.
A generalized Satake diagram naturally defines a semisimple automorphism $\theta$ of $\mfg$ restricting to the standard Cartan subalgebra $\mfh$ as an involution.
It also defines a subalgebra $\mfk\subset \mfg$ satisfying $\mfk \cap \mfh = \mfh^\theta$, but not necessarily a fixed-point subalgebra.
The subalgebra $\mfk$ can be quantized to a coideal subalgebra of $U_q(\mfg)$ endowed with a universal K-matrix in the sense of Kolb and Balagovi\'c.
We conjecture that all such coideal subalgebras of $U_q(\mfg)$ arise from generalized Satake diagrams in this way.
\end{abstract}

\subjclass[2010]{Primary 17B37, 17B05; Secondary 81R50}

\maketitle

\vspace{-1em}
{
\parskip=0.1em
\setcounter{tocdepth}{1}
\tableofcontents
\setcounter{tocdepth}{2} % full TOC in PDF outline
}
\vspace{-3em}

%%%%%%%%%%%%%%%%%%%%%%%%%%%%%%%%%%%%%%%%%%%%%%%%%%%%%%%%%%%%%%%%%%%%%%%%%%%%%%
% Section 1: Introduction
%%%%%%%%%%%%%%%%%%%%%%%%%%%%%%%%%%%%%%%%%%%%%%%%%%%%%%%%%%%%%%%%%%%%%%%%%%%%%%

\section{Introduction}

Given a finite-dimensional semisimple complex Lie algebra $\mfg$ and an involutive Lie algebra automorphism $\theta \in \Aut(\mfg)$, a \emph{symmetric pair} is a pair $(\mfg,\mfk)$ where $\mfk = \mfg^\theta$ is the corresponding fixed-point subalgebra of $\mfg$, see \cite{Ar62,Sa71}.
\emph{Quantum symmetric pairs} are their quantum analogons.
That is, the enveloping algebra $U(\mfg)$ can be quantized to a quasitriangular Hopf algebra, the Drinfeld-Jimbo quantum group $U_q(\mfg)$ endowed with the universal R-matrix $\mc{R}$, see \cite{Ji85,Dr87}.
The quantum analogon of $\mfg^\theta$ is a coideal subalgebra $B \subseteq U_q(\mfg)$ \cite{Le99,Le02,Ko14} having a compatible quasitriangular structure, the universal K-matrix $\mc{K}$ \cite{BK19,Ko20} (see also \cite[Sec.~2.5]{BW18a} for the case of quantum symmetric pairs of type AIII/AIV).
Quantizations of symmetric pairs appeared earlier in a different approach in \cite{NDS95,NS95} (also see \cite{KS09}).
A prior notion of a universal K-matrix, not directly linked to a quantum symmetric pair, appeared in \cite{DKM03}.

The map $\theta$, the fixed-point subalgebra $\mfk$, the coideal subalgebra $B$ and the universal object $\mc{K}$ are all defined in terms of combinatorial information, the so-called Satake diagram $(X,\tau)$.
Here $X$ is a subdiagram of the Dynkin diagram of $\mfg$ and $\tau$ is an involutive diagram automorphism stabilizing $X$ and satisfying certain compatibility conditions, see \cite{Le02,Ko14}.

It is the aim of this paper to extend some of this work to a more general setting.
A direct motivation for this is the fact that the correct quantum group analogue of the fixed-point subalgebra in the Letzter-Kolb theory is not a fixed-point subalgebra itself, but merely tends to one as $q \to 1$, see \cite[Sec.~4]{Le99} and \cite[Ch.~10]{Ko14}.
This suggests that there may be a generalization of this theory that does not require a fixed-point subalgebra as input.

A careful analysis of \cite{Ko14,BK15,BK19} indeed indicates that the compatibility conditions for $X$ and $\tau$ can be weakened.
Indeed, in \cite[Rmks.~2.6 and 3.14]{BK15} it is explicitly suggested that some key passages of the theory are amenable for generalizations.
This leads to the notion of a \emph{generalized Satake diagram}, see Definition \ref{defn:GSat}, and the whole theory survives in this setting with minor adjustments.
The resulting Lie subalgebra $\mfk = \mfk(X,\tau)$ is given in Definition \ref{defn:k} and the corresponding coideal subalgebra $B = B(X,\tau)$ in Definition \ref{defn:B}.
For $\mfg$ of type A, all generalized Satake diagrams are Satake diagrams.
For other $\mfg$, the generalized Satake diagrams that are not Satake diagrams are listed in Table \ref{tab:nonSatake}.

Our proposed generalization of Satake diagrams can be traced back to the work of A.~Heck \cite{He84}.
These diagrams classify involutions of the root system of $\mfg$ such that the corresponding restricted Weyl group is the Weyl group of the restricted root system.
The characterization in terms of the restricted Weyl group is relevant in the context of the universal R- and K-matrices for quantum symmetric pairs.
The universal R-matrix $\mc{R}$ has a distinguished factor called quasi R-matrix playing an important role in the theory of canonical bases for $U_q(\mfg)$, see \cite{Ka90} and \cite[Part IV]{Lu94}.
The quasi R-matrix possesses a remarkable factorization property expressed in terms of the braid group action on $U_q(\mfg)$ of the Weyl group of~$\mfg$, see \cite{KR90,LS90}.
Recently it has become clear that many of these properties extend to the universal K-matrix~$\mc{K}$.
It has a distinguished factor called quasi K-matrix, introduced in \cite{BW18a} for certain coideal subalgebras of $U_q(\mfsl_N)$ and in a more general setting in \cite{BK15}.
This object plays a prominent role in the theory of canonical bases for quantum symmetric pairs \cite{BW18b}; for a historical note we refer the reader to \cite[Rmk.~4.9]{BW18b}.
In \cite{DK19} a factorization property is established for the quasi K-matrix using a braid group action of the restricted Weyl group.
As a consequence of the present work, this factorization property naturally extends to quasi K-matrices defined in terms of generalized Satake diagrams.

The Kac-Moody generalization of this approach will be addressed in a future work.
Another outstanding issue is a Lie-theoretic motivation of the subalgebra $\mfk$, which we define in an \emph{ad hoc} manner directly in terms of the combinatorial data $(X,\tau)$, see Definition \ref{defn:k}.
Therefore we now provide a further motivation for the study of the subalgebra $\mfk$ and its quantization $B$.

%%%%%%%%%%%%%%%%%%%%%%%%%%%%%%%%%%%%%%%%%%%%%%%%%%%%%%%%%%%%%%%%%%%%%%%%%%%%%%

\subsection{Some remarks on the representation theory of $(U_q(\mfg),B)$}

Consider the completion $\mc{U}$ of $U_q(\mfg)$ with respect to the category of integrable $U_q(\mfg)$-modules, so that objects in them have well-defined images under any finite-dimensional representation, see e.g.~\cite{Lu94,Jan96}.
Then $\mc{U} \ot \mc{U}$ can be embedded in a completion $\mc{U}^{(2)}$ of $U_q(\mfg)^{\ot 2}$ and one can construct an invertible $\mc R \in \mc{U}^{(2)}$ satisfying
\[
\mc{R} \Del(a) = \Del^{\rm op}(a) \mc{R} \text{ for all } a \in U_q(\mfg), \qq (\Del \ot \id)(\mc{R}) = R_{13}R_{23}, \qq (\id \ot \Del)(\mc{R}) = R_{13}R_{12},
\]
where $\Del$ is the coproduct and $\Del^{\rm op}$ the opposite coproduct (these can be viewed as maps from $\mc{U}$ to $\mc{U}^{(2)}$).
Analogously, according to \cite{BK19,Ko20}, one can construct an invertible $\mc{K} \in \mc{U}$ and an involutive Hopf algebra automorphism $\phi$ of $\mc{U}$ such that $(\phi \ot \phi)(\mc{R}) = \mc{R}$ and
\gat{
\label{Uintw} \mc{K} b = \phi(b) \mc{K} \qq \text{for all } b \in B, \\
\label{inBU} (\mc{R}^\phi)_{21} \mc{K}_2 \mc{R} \in \mc{B}^{(2)} , \\
\label{copr} \Del(\mc{K}) = \mc{R}_{21} (1 \ot \mc{K}) \mc{R}^\phi (\mc{K} \ot 1) ,
}
where $\mc{R}^\phi = (\phi \ot \id)(\mc{R})$, the subscript $_{21}$ denotes the simple transposition of tensor factors in $\mc{U}^{(2)}$ and $\mc{B}^{(2)} \subseteq \mc{U}^{(2)}$ is a particular completion of $B \ot U_q(\mfg)$, see \cite[Eq.~(3.31)]{Ko20}.
As a consequence, the \emph{(universal) $\phi$-twisted reflection equation} is satisfied:
\eq{ \label{URE}
\mc{R}_{21}\, (1 \ot \mc{K}) \, \mc{R}^\phi \, (\mc{K} \ot 1) = (\mc{K} \ot 1) \, (\mc{R}^\phi)_{21} \, (1 \ot \mc{K}) \,\mc{R} \qq \in \mc{U}^{(2)}.
}
The automorphism $\phi$ is given by $\tau \tau_0$ where $\tau_0$ is the diagram automorphism corresponding to the longest element of the Weyl group of $\mfg$.
The expression for $\mc{K}$ is given in \cite[Cor.~7.7]{BK19}.

One could argue in favour of making the automorphism $\phi$ inner: adjoin to $\mc{U}$ a group-like element $c_\phi$ such that $\phi(u) = c_\phi u c_\phi^{-1}$ for all $u \in \mc{U}$.
Then the object $\mc{K}_\phi := c_\phi^{-1} \mc{K}$ satisfies \eqrefs{Uintw}{copr} with $\phi$ replaced by $\id$.
However, for certain nontrivial diagram automorphisms $\phi$, $c_\phi$ cannot be chosen inside $\mc{U}$  so that $\mc{K}_\phi$ cannot be evaluated in all finite-dimensional representations.
This relates to the fact that the weights defining certain fundamental representations are not fixed by $\phi$.
For instance, if $\rho$ is the vector representation of $U_q(\mfsl_N)$ with $N>2$ one checks that the matrices $\rho(\phi(u))$ and $\rho(u)$ are not simultaneously similar for all $u \in U_q(\mfg)$.

Now let $\rho$ the vector representation of $U_q(\mfg)$; if $\mfg$ is of exceptional type by this we mean the smallest fundamental representation (for ${\rm E}_6$ one has a choice of two representations).
Choose $R \in \GL(V \ot V)$ proportional to $(\rho \ot \rho)(\mc{R})$, $R^\phi \in \GL(V \ot V)$ proportional to $(\rho \ot \rho)(\mc{R}^\phi)$ and $K \in \GL(V)$ proportional to $\rho(\mc{K})$.
Applying $\rho \ot \rho$ to \eqref{URE} one obtains the \emph{matrix reflection equation}
\eq{ \label{mRE}
R_{21}\, (\Id \ot K) \, R^\phi \, (K \ot \Id) = (K \ot \Id) \, (R^\phi)_{21}\, (\Id \ot K) \, R \qq \in \End(V\ot V)
}
where the subscript $_{21}$ indicates conjugation by the permutation operator in $\GL(V \ot V)$.
Starting with $\mfg$ of classical Lie type and a coideal subalgebra $B=B(X,\tau)$ where $(X,\tau)$ is a Satake diagram, the matrices $\rho(\mc{K})$ correspond to the solutions of \eqref{mRE} used in \cite{NDS95,NS95} to define quantum symmetric pairs.

Treating the matrix $R$ as given, one can of course solve \eqref{mRE} for $K \in \GL(V)$.
For $U_q(\mfsl_N)$ and $V=\C^N$ this was done by A.~Mudrov \cite{Mu02}.
From this result and computations for $U_q(\mfg)$ whose vector representation is of dimension at most 9 (i.e.~with $\mfg$ of types ${\rm B}_n$, ${\rm C}_n$, ${\rm D}_n$ ($n\le 4$) and ${\rm G}_2$) one obtains a classification of solutions $K$ of \eqref{mRE} for those pairs $(U_q(\mfg),\rho)$.
One can match this list of solutions $K$ one-to-one with a list of generalized Satake diagrams $(X,\tau)$ by checking which $K$ satisfies $K \rho(b) = \rho(\phi(b)) K$ for all $b \in B = B(X,\tau)$, i.e.~the image of \eqref{Uintw} under $\rho$.
Although this intertwining equation does not determine $K$ uniquely, it turns out that, provided $K\notin \C\,\Id$, each $K$ intertwines $\rho|_{B}$ for a unique $B = B(X,\tau)$ with $X$ not equal to the whole Dynkin diagram $I$.
In the case $X=I$ we must have $\tau = \tau_0$ and $B = U_q(\mfg)$ so that the excluded case $K \in \C \, \Id$ can be matched to it.
It leads to the following conjecture.

\begin{conj} \label{conj}
Let $\rho: U_q(\mfg) \to \End(V)$ be the vector representation of $U_q(\mfg)$.
\begin{enumerate}
\item
If $K \in \GL(V)$ is a solution of \eqref{mRE} there exists a generalized Satake diagram $(X,\tau)$ such that $K$ is proportional to $\rho(\mc{K})$ where $\mc{K} = \mc{K}(X,\tau)$ is the universal K-matrix for the subalgebra $B=B(X,\tau)$.\smallskip
\item
The only quasitriangular coideal subalgebras of $U_q(\mfg)$ are of the form $(B(X,\tau),\mc{K}(X,\tau))$ with $(X,\tau)$ a generalized Satake diagram.
\end{enumerate}
\end{conj}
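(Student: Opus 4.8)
The plan is to prove the second part by reducing it to the first and then to attack the first part by combining the known low-rank classifications with a uniform structural argument. For the reduction, let $(B,\mc K)$ be an arbitrary quasitriangular coideal subalgebra and set $K := \rho(\mc K) \in \GL(V)$. Applying $\rho \ot \rho$ to the universal $\phi$-twisted reflection equation \eqref{URE} shows that $K$ solves the matrix reflection equation \eqref{mRE}, so by the first part there is a generalized Satake diagram $(X,\tau)$ with $K$ proportional to $\rho(\mc K(X,\tau))$. The image of the intertwining relation \eqref{Uintw} under $\rho$ reads $K\rho(b) = \rho(\phi(b))K$ for all $b \in B$, whence $B \subseteq B_K := \{u \in U_q(\mfg) : K\rho(u) = \rho(\phi(u))K\}$; since $\phi$ is an algebra automorphism, $B_K$ is a subalgebra. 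It then remains to identify the coideal $B$ inside $B_K$ with $B(X,\tau)$, after which the uniqueness of the universal K-matrix attached to a fixed coideal (from \cite{BK19,Ko20}) forces $\mc K = \mc K(X,\tau)$ up to normalization.

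For the first part I would first fix the spectral decomposition of $R$ on $V \ot V$ and the isotypic decomposition of $V \ot V$ under $\Del(U_q(\mfg))$, which has only a few components for the vector representation. Substituting these into \eqref{mRE} converts the reflection equation into a finite system of quadratic constraints on the blocks of $K$ relative to the weight grading of $V$. In outline one wants to show: (a) that $K$ is forced to be block-diagonal for the coarsening of the weight spaces dictated by $\phi = \tau\tau_0$; (b) that the residual freedom reduces to the choice of an involution-type pairing on the simple roots; and (c) that this combinatorial datum is exactly a generalized Satake diagram in the sense of Definition \ref{defn:GSat}. For $U_q(\mfsl_N)$ step (a) is Mudrov's classification \cite{Mu02}, and for $\mfg$ of types ${\rm B}_n$, ${\rm C}_n$, ${\rm D}_n$ ($n\le 4$) and ${\rm G}_2$ it follows from the finite computation already referenced above. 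A type-independent proof would instead pass to the classical limit $q \to 1$, where \eqref{mRE} degenerates to an infinitesimal reflection equation whose solutions should correspond to the semisimple automorphisms $\theta$ restricting to $\mfh$ as an involution; by A.~Heck's work \cite{He84} these are precisely the data recorded by generalized Satake diagrams.

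The \emph{main obstacle} is twofold, and both difficulties lie in the passage from the matrix datum $K$ back to the algebra $B$. First, the classical-limit argument must be made reversible: one has to deform a classical solution back to a genuine solution of \eqref{mRE} and verify that no solutions are created or lost in the limit, which is delicate precisely because $B(X,\tau)$ does not converge to a fixed-point subalgebra but only approximates one as $q \to 1$. Second, even granting the classification of the matrices $K$, showing that the largest coideal subalgebra of $B_K$ is $B(X,\tau)$ rather than some strictly larger or exotic coideal requires proving that the intertwining relation on the single representation $\rho$ already detects the whole coideal; ruling out spurious intertwiners will likely force one to supplement $\rho$ with enough further fundamental representations or to argue directly with the coideal and specialization structure. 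I expect this reconstruction step, together with the absence of a uniform classification of \eqref{mRE} across all types, to be the decisive difficulty, which is exactly why the statement is recorded here as a conjecture.
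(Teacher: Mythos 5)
The statement you are asked to prove is labelled a conjecture, and the paper offers no proof of it: the only support given is (a) Mudrov's classification of solutions of \eqref{mRE} for $U_q(\mfsl_N)$, (b) finite computations for $\mfg$ of types ${\rm B}_n$, ${\rm C}_n$, ${\rm D}_n$ ($n\le 4$) and ${\rm G}_2$, and (c) the observation that each resulting $K\notin\C\,\Id$ intertwines $\rho|_B$ for a unique $B=B(X,\tau)$ with $X\ne I$. Your outline is consistent with that evidence, you do not claim a complete argument, and the two obstacles you isolate (reversibility of the $q\to 1$ degeneration, and reconstruction of $B$ from the single matrix $K$) are genuine; so there is no false step to point at, only an honest research plan for an open statement.

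Two refinements are worth recording. First, your reduction of part (ii) to part (i) silently assumes that $\rho$ separates universal K-matrices, i.e.\ that $\rho(\mc K)\propto\rho(\mc K(X,\tau))$ forces $\mc K\propto\mc K(X,\tau)$ in the completion $\mc U$; this is exactly the same faithfulness issue you flag for $B\subseteq B_K$, and it cannot be dispatched by the uniqueness results of \cite{BK19,Ko20}, which presuppose the coideal is already of Letzter--Kolb form. The paper itself concedes that the intertwining equation does not determine $K$ uniquely, so the map from solutions to diagrams is many-to-one at the level of matrices and must be controlled at the level of parameters. Second, your sketch treats the correspondence as ``solution $\leftrightarrow$ diagram'', whereas the paper's sharpened form of the conjecture makes clear it is ``solution $\leftrightarrow$ diagram plus parameters $(\bm\ga,\bm\si)$'', with the admissible $\si_i$ for $i\in\Ins$ satisfying a condition (from \cite{BB10}) that lies \emph{outside} the set $\Si_q$ used in the rest of the theory. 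Any classical-limit argument must therefore also track these parameters, which is especially delicate for weak Satake diagrams where \eqref{gamma:bar} forces $\ga_i\to 0$ as $q\to 1$ and the limit of $B$ is not a fixed-point subalgebra (Propositions \ref{prop:k:weak:ideal} and \ref{prop:notfixed}). Your step (c), identifying the residual combinatorial datum with a generalized Satake diagram, would need to absorb both of these points before the degeneration argument could close.
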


In the Letzter-Kolb approach, the generators of the coideal subalgebra $B$ associated to a node $i \in I \backslash X$ carry extra parameters: scalars $\gamma_i \ne 0$ and $\si_i$, see Definition \ref{defn:B} and we can sharpen Conjecture \ref{conj} (i).
Namely, let $d_i$ denote the squared length of root $\al_i$ and write $q_i=q^{d_i}$.
Consider the set $I_{\rm ns} = \{ i \in I \backslash X \,|\, i \text{ does not neighbour } X, \, \tau(i)=i \}$, see~\eqref{specialorbits}, and the sets $\Ga_q$ and $\Si_q$, see~\eqref{parametersets:defn}; these definition go back to \cite{Le03,Ko14}.
Conjecturally, any invertible matrix solution $K$ of \eqref{mRE} is proportional to $\rho(\mc{K})$ for some $B(X,\tau)$ with $(X,\tau)$ a generalized Satake diagram whose parameters satisfy $(\ga_i)_{i \in I \backslash X} \in \Ga_q$, $\si_i = 0$ if $i \notin I_{\rm ns}$ and for all $(i,j) \in I_{\rm ns} \times I_{\rm ns}$ such that $i\ne j$ one of three conditions must hold: the Cartan integer $a_{ij}$ is even, $\si_j = 0$, or $(q_i-q_i^{-1})^2 \si_i^2 = - (q_i^{r/2}+q_i^{-r/2})^2 q_i \gamma_i$ for some odd positive $r \le -a_{ij}$.
The set $\Si_q$ does not cover the third possibility, which appeared in \cite{BB10} for $a_{ij} \in \{-1,-3\}$.
Conjecture \ref{conj} (ii) can be made more precise in an analogous way.

The approach in \cite{BK19} requires also certain constraints on $\ga_i$ and $\si_i$ under the transformation $q \to q^{-1}$ which are given in \eqref{gamma:bar} and \eqref{sigma:bar} in the present notation and generality.

%%%%%%%%%%%%%%%%%%%%%%%%%%%%%%%%%%%%%%%%%%%%%%%%%%%%%%%%%%%%%%%%%%%%%%%%%%%%%%

\subsection{Outline}

The paper is organized as follows.
In Section \ref{sec:Lie} we define the basic objects associated to a finite-dimensional semisimple complex Lie algebra $\mfg$ and its Cartan subalgebra~$\mfh$.
We introduce generalized Satake diagrams and explain how they emerge in the work of A.~Heck.

In Section \ref{sec:k} we define the Lie subalgebra $\mfk\subseteq\mfg$ in terms of $(X,\tau)$.
In Theorem \ref{mainthrm}, the main result of this section, we show that $\mfk$ satisfies $\mfk \cap \mfh = \mfh^\theta$ precisely if $(X,\tau)$ is a generalized Satake diagram.
In Propositions \ref{prop:kprime} and \ref{prop:k:weak:ideal} we describe the derived subalgebra of $\mfk$ and establish that when $\mfk$ is not reductive it is a semidirect product of a reductive subalgebra and a nilpotent ideal of class 2.
We end this section discussing the universal enveloping algebra $U(\mfk)$.

In Section \ref{sec:quantum} we indicate the necessary modifications to the papers \cite{Ko14,BK15,BK19,Ko20,DK19} so that they apply to the quantum pair algebras $B = U_q(\mfk)$ associated to generalized Satake diagrams.

\enlargethispage{1em}

Appendix \ref{appendix} contains three technical lemmas in aid of Section \ref{sec:k}.

We use the symbol $\defnend$ to indicate the end of definitions, examples and remarks.

%%%%%%%%%%%%%%%%%%%%%%%%%%%%%%%%%%%%%%%%%%%%%%%%%%%%%%%%%%%%%%%%%%%%%%%%%%%%%%
% Section 2
%%%%%%%%%%%%%%%%%%%%%%%%%%%%%%%%%%%%%%%%%%%%%%%%%%%%%%%%%%%%%%%%%%%%%%%%%%%%%%

\section{Finite-dimensional semisimple Lie algebras and root system involutions} \label{sec:Lie}

Let $I$ be a finite set and $A = (a_{ij})_{i,j \in I}$ a Cartan matrix.
In particular, there exist positive rationals $d_i$ ($i \in I$) such that $d_i a_{ij} = d_j a_{ji}$.
Let $\mfg = \mfg(A)$ be the corresponding finite-dimensional semisimple Lie algebra over $\C$.
It is generated by $\{ e_i, f_i,h_i \}_{i \in I}$ subject to
\gat{
\label{g:rel1} [h_i,h_j]=0, \qq [h_i,e_j] = a_{ij}e_j, \qq [h_i,f_j] = -a_{ij} f_j, \qq [e_i,f_j] = \del_{ij} h_i, \\
\label{g:Serre} \ad(e_i)^{M_{ij}}(e_j) = \ad(f_i)^{M_{ij}}(f_j) = 0 \qq \text{if } i \ne j,
}
for all $i,j \in I$, where we have set $M_{ij}:=1-a_{ij} \in \Z_{>0}$ if $i \ne j$.
The standard Cartan and nilpotent subalgebras are $\mfh = \langle h_i \,|\, i \in I \rangle$, $\mfn^+ = \langle e_i \,|\, i \in I \rangle$ and $\mfn^- = \langle f_i \,|\, i \in I \rangle$.

The simple roots $\al_i \in \mfh^*$ ($i \in I$) satisfy $\al_j(h_i) = a_{ij}$ for $i,j \in I$.
Let $Q = \sum_{i \in I} \Z \al_i$ denote the root lattice and write $Q^+ = \sum_{i \in I} \Z_{\ge 0} \al_i$.
For all $\al, \beta \in Q$, we write $\al > \beta$ if $\al-\beta \in Q^+ \backslash \{0\}$.
The Lie algebra $\mfg$ is $Q$-graded in terms of the root spaces $\mfg_\al = \{ x \in \mfg \, | \, [h,x] = \al(h)\, x \text{ for all } h \in \mfh \}$ and we have the following identities for $\mfh$-modules:
\eq{ \label{g:rootdecomp}
\mfg = \mfn^+ \oplus \mfh \oplus \mfn^-, \qq \mfn^\pm = \bigoplus_{\al \in Q^+} \mfg_{\pm \al}, \qq \mfh = \mfg_0.
}
Hence the root system $\Phi := \{ \al \in Q \, | \, \mfg_\al \ne \{0\}, \, \al \ne 0 \}$ satisfies $\Phi = \Phi^+ \cup \Phi^-$ where $\Phi^\pm = \pm(\Phi \cap Q^+)$.
The Weyl group $W$ is the (finite) subgroup of $\GL(\mfh^*)$ generated by the simple reflections $s_i$ ($i \in I$) acting via $s_i(\al) = \al - \al(h_i)\,\al_i$ for all $i \in I$, $\al \in \mfh^*$.
We define
\begin{align}
\Aut(\Phi) &= \{ g \in \GL(\mfh^*) \, | \, g(\Phi) = \Phi \}, \\
\Aut(A) &= \{ \si : I \to I \text{ invertible} \,\, | \,\, a_{\si(i) \, \si(j)} = a_{ij} \text{ for all } i,j \in I \}.
\end{align}
Then $\Aut(\Phi) = W \rtimes \Aut(A)$, with $\Aut(A)$ acting by relabelling.

We briefly review some important subgroups of
\eq{
\Aut(\mfg,\mfh) = \{ \si \in \Aut(\mfg) \, | \, \si(\mfh) = \mfh \} < \Aut(\mfg).
}
We have $\Aut(A) < \Aut(\mfg,\mfh)$ (acting by relabelling).
Also, a braid group action on $\mfg$ is given by $\Ad(s_i) = \exp(\ad(e_i)) \exp(\ad(-f_i)) \exp(\ad(e_i)) \in \Aut(\mfg,\mfh)$ for $i \in I$.
It extends the action of $W$ on $\mfh$ dual to the one on $\mfh^*$ and satisfies $\Ad(W) < \Aut(\mfg,\mfh)$.
The Chevalley involution $\om \in \Aut(\mfg,\mfh)$ is defined by swapping $e_i$ and $-f_i$ for all $i \in I$; it commutes with $\Ad(W)$ and with $\Aut(A)$.
Finally, the group $\wt H := \Hom(Q,\C^\times)$ naturally induces a subgroup $\Ad(\wt H) < \Aut(\mfg,\mfh)$ via $\Ad(\chi)|_{\mfg_\al} = \chi(\al)\, \id_{\mfg_\al}$ for all $\chi \in \wt H$, $\al \in Q$.

The elements of $\Aut(\mfg,\mfh)$ can be dualized to elements of $\Aut(\Phi)$.
Conversely, since $-\id_{\mfh^*} \in \Aut(\Phi)$ and $\Aut(\Phi) = W \rtimes \Aut(A)$, given $g \in \Aut(\Phi)$ there exists a unique $(w,\tau) \in W \times \Aut(A)$ such that $g = - w \tau$.
Then $\psi = \Ad(w)\, \omega \tau \in \Aut(\mfg,\mfh)$ satisfies $(\psi|_{\mfh})^* = g$.

%%%%%%%%%%%%%%%%%%%%%%%%%%%%%%%%%%%%%%%%%%%%%%%%%%%%%%%%%%%%%

\subsection{Compatible decorations and involutions of $\Phi$}

Given a subset $X \subseteq I$ denote the corresponding Cartan submatrix by $A_X = (a_{ij})_{i,j \in X}$ and consider the semisimple Lie algebra $\mfg_X := \langle e_i,f_i,h_i \, | \, i \in X \rangle \subseteq \mfg$ with Cartan subalgebra $\mfh_X= \mfh \cap \mfg_X$ and dual Weyl vector $\rho^\vee_X \in \mfh_X$.
The unique longest element $w_X$ of the Weyl group $W_X := \langle s_i \, | \, i \in X \rangle$ is an involution and there exists $\tau_{0,X} \in \Aut(A_X)$ which satisfies 
\eq{ \label{wX:node}
-w_X(\al_i) = \al_{\tau_{0,X}(i)} \qq \text{for all } i \in X.
}
We recall here the basic fact that both $w_X$ and $\tau_{0,X}$ naturally factorize with respect to the decomposition of $X$ into connected components.
Furthermore, if $X$ is connected then $\tau_{0,X}$ is trivial unless $X$ is of type ${\rm A}_n$ with $n>1$, ${\rm D}_n$ with $n>4$ odd or ${\rm E}_6$ (in each case of which there is a unique nontrivial diagram automorphism).
Note that $\Ad(w_X)|_{\mfg_X} =  \tau_{0,X}\,\om|_{\mfg_X}$ and $\Ad(w_X)^2 = \Ad(\zeta)$, where $\zeta \in \wt H$ is defined by $\zeta(\al) = (-1)^{\al(2\rho^\vee_X)}$ for all $\al \in Q$.

We can describe
\ali{
\Autinv(\mfg,\mfh) &:= \{ \psi \in \Aut(\mfg,\mfh) \,\, | \,\, \psi^2|_{\mfh} = \id_{\mfh} \} , \\
\Autinv(\Phi) &:= \{ g \in \Aut(\Phi) \,\, | \,\, g^2 = \id_{\mfh^*} \}
}
by combinatorial data.
Define the set of \emph{compatible decorations} as
\eq{
\CD(A) = \{ (X,\tau) \,\, | \,\, X \subseteq I, \, \tau \in \Aut(A), \, \tau^2 = \id_I,  \, \tau(X) = X, \, \tau|_X = \tau_{0,X} \}.
}
In the associated Dynkin diagram one marks a compatible decoration by filling the nodes corresponding to $X$ and drawing bidirectional arrows for the nontrivial orbits of~$\tau$.

\begin{exam} \label{ex1} 
Let $A$ be of type ${\rm A}_n$, $n\ge 2$.
The compatible decorations are 
\[
\begin{tikzpicture}[baseline=-2em,line width=0.7pt,scale=75/100]
\clip (-.7,-.4) rectangle (1.7,.4);
\draw[thick] (-.5,0) -- (0,0);
\draw[thick,dashed] (0,0) -- (1,0);
\draw[thick] (1,0) --  (1.5,0);
\filldraw[fill=white] (-.5,0) circle (.1);
\filldraw[fill=white] (0,0) circle (.1);
\filldraw[fill=white] (1,0) circle (.1);
\filldraw[fill=white] (1.5,0) circle (.1);
\end{tikzpicture}
\qq
\begin{tikzpicture}[baseline=-2em,line width=0.7pt,scale=75/100]
\draw[thick,dashed] (0,0) -- (1,0);
\draw[thick] (1,0) --  (2,0);
\draw[thick,dashed] (2,0) -- (3,0);
\draw[thick] (3,0) --  (4,0);
\draw[thick,dashed] (4,0) -- (5.5,0);
\draw[thick] (5.5,0) -- (6.5,0);
\draw[thick,dashed] (6.5,0) -- (7.5,0);
\draw[snake=brace] 
(-.1,.15) -- (1.1,.15) node[midway,above]{\scriptsize $p_1$}
(1.9,.15) -- (3.1,.15) node[midway,above]{\scriptsize $p_2$}
(6.4,.15) -- (7.6,.15) node[midway,above]{\scriptsize $p_k$};
\filldraw[fill=white] 
(0,0) circle (.1) (1,0) circle (.1) (2,0) circle (.1) (3,0) circle (.1) (4,0) circle (.1) (1,0) circle (.1) (5.5,0) circle (.1) (6.5,0) circle (.1) (7.5,0) circle (.1);
\filldraw[fill=black] (1.5,0) circle (.1) (3.5,0) circle (.1) (6,0) circle (.1);
\end{tikzpicture}
\qq
\begin{tikzpicture}[baseline=-2em,line width=0.7pt,scale=75/100]
\clip (1.3,-.6) rectangle (4.6,1.1);
\draw[thick,dashed] (1.5,.4) -- (2.5,.4);
\draw[thick] (2.5,.4) -- (3,.4);
\draw[thick,dashed] (3,.4) -- (4,.4);
\draw[thick] (4,.4) -- (4.5,0) -- (4,-.4);
\draw[thick,dashed] (1.5,-.4) -- (2.5,-.4);
\draw[thick] (2.5,-.4) -- (3,-.4);
\draw[thick,dashed] (3,-.4) -- (4,-.4);
\draw[snake=brace] (2.9,.55) -- (4.6,.55) node[midway,above]{\scriptsize $r$};
\filldraw[fill=white] (1.5,.4) circle (.1);
\filldraw[fill=white] (1.5,-.4) circle (.1);
\draw[<->,gray] (1.5,.3) -- (1.5,-.3);
\filldraw[fill=white] (2.5,.4) circle (.1);
\filldraw[fill=white] (2.5,-.4) circle (.1);
\draw[<->,gray] (2.5,.3) -- (2.5,-.3);
\filldraw[fill=black] (3,.4) circle (.1);
\filldraw[fill=black] (3,-.4) circle (.1);
\draw[<->,gray] (3,.3) -- (3,-.3);
\filldraw[fill=black] (4,.4) circle (.1);
\filldraw[fill=black] (4,-.4) circle (.1);
\draw[<->,gray] (4,.3) -- (4,-.3);
\filldraw[fill=black] (4.5,0) circle (.1);
\end{tikzpicture} 
\qq
\begin{tikzpicture}[baseline=-2em,line width=0.7pt,scale=75/100]
\clip (1.3,-.6) rectangle (4.6,1.1);
\draw[thick,dashed] (1.5,.4) -- (2.5,.4);
\draw[thick] (2.5,.4) -- (3,.4);
\draw[thick,dashed] (3,.4) -- (4,.4);
\draw[thick,domain=270:450] plot({4+.4*cos(\x)},{.4*sin(\x)});
\draw[thick,dashed] (1.5,-.4) -- (2.5,-.4);
\draw[thick] (2.5,-.4) -- (3,-.4);
\draw[thick,dashed] (3,-.4) -- (4,-.4);
\draw[snake=brace] (2.9,.55) -- (4.1,.55) node[midway,above]{\scriptsize $r$};
\filldraw[fill=white] (1.5,.4) circle (.1);
\filldraw[fill=white] (1.5,-.4) circle (.1);
\draw[<->,gray] (1.5,.3) -- (1.5,-.3);
\filldraw[fill=white] (2.5,.4) circle (.1);
\filldraw[fill=white] (2.5,-.4) circle (.1);
\draw[<->,gray] (2.5,.3) -- (2.5,-.3);
\filldraw[fill=black] (3,.4) circle (.1);
\filldraw[fill=black] (3,-.4) circle (.1);
\draw[<->,gray] (3,.3) -- (3,-.3);
\filldraw[fill=black] (4,.4) circle (.1);
\filldraw[fill=black] (4,-.4) circle (.1);
\draw[<->,gray] (4,.3) -- (4,-.3);
\end{tikzpicture}
\]
where $k \in \Z_{\ge 2}$, $p_1,p_k\in\Z_{\ge0}$, $p_2,\dots,p_{k-1}\in\Z_{\ge1}$ and $r$, the number of $\tau$-orbits in $X$, is constrained by $0\le r \le \lceil n/2 \rceil$.
\hfill \examend
\end{exam}

Given $(X,\tau) \in \CD(A)$, we define 
\eq{ \label{theta:defn}
\theta = \theta(X,\tau) = - w_X \tau \in \Autinv(\Phi).
}
As explained above, the map dual to $\theta$ can be extended to an element of $\Autinv(\mfg,\mfh)$, also called $\theta$ and given by $\theta =\Ad(w_X)\,\tau \om$.
Owing to aforementioned properties of $\Ad(w_X)$ we have
\eq{ 
\label{theta:basic} \theta|_{\mfg_X} = \id_{\mfg_X}, \qq \theta^2 = \Ad(\zeta).
}
Note that $(\id-\theta)(h_i-h_{\tau(i)})$ lies in $\mfh_X \subseteq \mfh^\theta$ for all $i \in I$.
Hence it vanishes so that
\eq{
\label{theta:h} \theta(h_i-h_{\tau(i)}) = h_i-h_{\tau(i)}.
}
We fix a subset $I^* \subseteq I \backslash X$ containing precisely one element from each $\tau$-orbit in $I\backslash X$.
As a consequence of \eqref{theta:h} we have
\eq{
\label{htheta:expl}
\mfh^\theta = \bigoplus_{i \in X} \C h_i \oplus \bigoplus_{\substack{i \in I^* \\ i \ne \tau(i)}} \C(h_i-h_{\tau(i)}).
}

%%%%%%%%%%%%%%%%%%%%%%%%%%%%%%%%%%%%%%%%%%%%%%%%%%%%%%%%%%%%%

\subsection{Generalized Satake diagrams and the restricted Weyl group}

For $i \in I \backslash X$ denote by $\check X(i)$ the union of connected components of $X$ neighbouring $\{ i, \tau(i) \}$.

\begin{defn} \label{defn:GSat}
\emph{Generalized Satake diagrams} are elements of the set 
\begin{flalign*}
&& \GSat(A) := \Big\{ (X,\tau) \in \CD(A) \,\, \Big| \,\, \forall i \in I \backslash X: \, \check X(i) \cup \{ i, \tau(i)\} \ne \begin{tikzpicture}[baseline=-0.3em,line width=0.7pt,scale=75/100]
\draw[thick] (.5,0) -- (1,0);
\filldraw[fill=white] (.5,0) circle (.1);
\filldraw[fill=black] (1,0) circle (.1);
\end{tikzpicture}
\Big\}.
&& \defnend
\end{flalign*}
\end{defn}

The compatible decorations considered in Example \ref{ex1} lie in $\GSat(A)$ if and only if $p_1=p_k=0$ and $p_2=\ldots=p_{k-1}=1$.

\begin{rmk}
Generalized Satake diagrams were first considered by A.~Heck in \cite{He84}.
He uses the term ``Satake diagrams'' in a more general setting, see \cite[\S 1 - \S 2]{He84}: he starts with $\si = -\theta \in \Aut^{\rm inv}(\Phi)$ and calls a base $\Pi$ of $\Phi$ $\si$-fundamental if for all $\al \in \Pi$ either $\theta(\al)=\al$ or $\theta(\al) \in \Z_{\le 0} \Pi$.
Letting $X$ consist of the nodes corresponding to $\Pi^\theta$ in the Dynkin diagram corresponding to $\Pi$, it follows that $\tau := \si w_X$ is an involutive diagram automorphism restricting on $X$ to $\tau_{0,X}$.
He calls $(X,\tau)$ the Satake diagram of $\si$, which we call a compatible decoration; what he calls an ``admissible Satake diagram'' is in our case a generalized Satake diagram.
Since the term ``Satake diagram'' has come to be associated to involutions of the complex Lie algebra $\mfg$, we prefer the nomenclature ``compatible decoration'' and ``generalized Satake diagram''.
\hfill \rmkend
\end{rmk}

Note that $(X,\tau) \in \CD(A)$ is a generalized Satake diagram precisely if 
\eq{ \label{GSat:rephrased}
 \forall (i,j) \in I\backslash X \times X \,: \, \tau(i)=i, \, w_X(\al_i) = \al_i+\al_j \, \implies \, a_{ij} \ne -1 ,
}
which is the condition needed in \cite[Proof of Lemma 5.11, Step 1]{Ko14} and \cite[Proof of Lemma 6.4]{BK19}.
Straightforwardly one checks that it is equivalent to any of the following conditions:
\begin{gather}
\forall (i,j) \in I\backslash X \times X \,: \, \tau(i)=i, \, X(i) = \{ j \} \, \implies a_{ij}a_{ji} \ne 1, \\
\forall i,j \in I: \, \theta(\al_i) = -(\al_i+\al_j) \, \implies \, a_{ij} \ne -1, \\
\forall i \in I: \, (\theta(\al_i))(h_i) \ne -1.
\end{gather}

Satake diagrams can be defined as the following subset of compatible decorations of $A$:
\eq{ \label{Sat:defn}
\Sat(A) = \{ (X,\tau) \in \CD(A) \,\, | \,\, \forall i \in I \backslash X:\, i = \tau(i) \, \implies \, \al_i(\rho^\vee_X) \in \Z \}.
}
Satake diagrams classify involutive Lie algebra automorphisms up to conjugacy, see e.g.~\cite{Ar62}.
In our notation, for $(X,\tau) \in \GSat(A)$ and $\bm \ga \in (\C^\times)^{I^*}$ define $\chi_{\bm \ga} \in \wt H$ and $\theta_{\bm \ga} \in \Aut(\mfg)$ by 
\eq{ \label{theta_gamma}
\chi_{\bm \ga}(\al_i) = \begin{cases} 
1 & \text{if } i \in X , \\
\ga_i & \text{if } i \in I^*, \\
\ga_{\tau(i)} \zeta(\al_i) & \text{if } i \in I \backslash (X \cup \backslash I^*),
\end{cases} 
\qq \qq \qq
\theta_{\bm \ga} = \Ad(\chi_{\bm \ga})\,\theta
}
and note that \eqref{theta:basic} implies $\theta_{\bm \ga}^2 = \id_{\mfg}$ if $(X,\tau) \in \Sat(A)$.

We have $\Sat(A) \subseteq \GSat(A)$; indeed, if $(X,\tau) \in \CD(A) \backslash \GSat(A)$ there exists $(i,j) \in I \backslash X \times X$ such that $\tau(i)=i$, $\check X(i) = \{j\}$ and $a_{ij}=a_{ji}=-1$, so that $\al_i(\rho^\vee_X) = a_{ji}/2 \notin \Z$ implying $(X,\tau) \notin \Sat(A)$.
We refer the reader to the classification of generalized Satake diagrams in \cite[Table I]{He84}.
Since this does not distinguish between elements of $\Sat(A)$ and $\GSat(A) \backslash \Sat(A)$, for convenience we list the elements of $\GSat(A) \backslash \Sat(A)$, see Table \ref{tab:nonSatake}; note that outside type ${\rm A}_n$ we have $\GSat(A) \ne \Sat(A)$.

\begin{table}[h] 
\caption{The set $\GSat(A) \backslash \Sat(A)$ for indecomposable Cartan matrices $A$.
By a case-by-case analysis there is a unique $i \in I$ such that $i = \tau(i)$ and $\al_i(\rho^\vee_X) \notin \Z$ and we have indicated that node in the diagrams.
The classical families of diagrams are labelled in the standard way.
For types ${\rm C}_n$ and ${\rm D}_n$ upper bounds on $i$ are imposed to avoid the cases when $\theta$ is an involution whose fixed-point subalgebra is isomorphic to $\mfgl_n$.
}
\label{tab:nonSatake}

\vspace{-1em}

\begin{gather*}
\begin{array}{cccc}
\begin{tikzpicture}[baseline=-0.25em,line width=0.7pt,scale=75/100]
\draw[thick] (1,0) -- (1.5,0);
\draw[thick,dashed] (1.5,0) -- (2.5,0);
\draw[thick] (2.5,0) -- (3.5,0);
\draw[thick,dashed] (3.5,0) -- (4.5,0);
\draw[double,->] (4.6,0) --  (4.95,0);
\filldraw[fill=black] (1,0) circle (.1) node[above]{\scriptsize $1$};
\filldraw[fill=white] (1.5,0) circle (.1);
\filldraw[fill=black] (2.5,0) circle (.1);
\filldraw[fill=white] (3,0) circle (.1) node[above]{\scriptsize $i$};
\filldraw[fill=black] (3.5,0) circle (.1);
\filldraw[fill=black] (4.5,0) circle (.1);
\filldraw[fill=black] (5,0) circle (.1) node[above]{\scriptsize $n$};
\end{tikzpicture} 
&
\begin{tikzpicture}[baseline=-0.25em,line width=0.7pt,scale=75/100]
\draw[thick,dashed] (1.5,0) -- (2.5,0);
\draw[thick] (2.5,0) -- (3,0);
\draw[thick,dashed] (3,0) -- (4,0);
\draw[double,<-] (4.05,0) --  (4.4,0);
\filldraw[fill=white] (1.5,0) circle (.1) node[above]{\scriptsize $1$};
\filldraw[fill=white] (2.5,0) circle (.1) node[above]{\scriptsize $i$};
\filldraw[fill=black] (3,0) circle (.1);
\filldraw[fill=black] (4,0) circle (.1);
\filldraw[fill=black] (4.5,0) circle (.1) node[above]{\scriptsize $n$};
\end{tikzpicture} 
&
\begin{tikzpicture}[baseline=-0.25em,line width=0.7pt,scale=75/100]
\draw[thick] (1,0) -- (1.5,0);
\draw[thick,dashed] (1.5,0) -- (2.5,0);
\draw[thick] (2.5,0) -- (3.5,0);
\draw[thick,dashed] (3.5,0) -- (4.5,0);
\draw[thick] (4.9,.3) -- (4.5,0) -- (5.1,-.3);
\filldraw[fill=black] (1,0) circle (.1) node[above]{\scriptsize $1$};
\filldraw[fill=white] (1.5,0) circle (.1);
\filldraw[fill=black] (2.5,0) circle (.1);
\filldraw[fill=white] (3,0) circle (.1) node[above]{\scriptsize $i$};
\filldraw[fill=black] (3.5,0) circle (.1);
\filldraw[fill=black] (4.5,0) circle (.1);
\filldraw[fill=black] (4.9,.3) circle (.1) node[right]{\scriptsize $n\!-\!1$};
\filldraw[fill=black] (5.1,-.3) circle (.1) node[right]{\scriptsize $n$};
\end{tikzpicture}
&
\begin{tikzpicture}[baseline=-0.25em,line width=0.7pt,scale=75/100]
\draw[thick] (1,0) -- (1.5,0);
\draw[thick,dashed] (1.5,0) -- (2.5,0);
\draw[thick] (2.5,0) -- (3.5,0);
\draw[thick,dashed] (3.5,0) -- (4.5,0);
\draw[thick] (5,.3) -- (4.5,0) -- (5,-.3);
\draw[<->,gray] (5,.2) -- (5,-.2);
\filldraw[fill=black] (1,0) circle (.1) node[above]{\scriptsize $1$};
\filldraw[fill=white] (1.5,0) circle (.1);
\filldraw[fill=black] (2.5,0) circle (.1);
\filldraw[fill=white] (3,0) circle (.1) node[above]{\scriptsize $i$};
\filldraw[fill=black] (3.5,0) circle (.1);
\filldraw[fill=black] (4.5,0) circle (.1);
\filldraw[fill=black] (5,.3) circle (.1) node[right]{\scriptsize $n\!-\!1$};
\filldraw[fill=black] (5,-.3) circle (.1) node[right]{\scriptsize $n$};
\end{tikzpicture}
\\
i \text{ even} & i<n & i<n-1, \, i \text{ even}, n \text{ even} & i<n-2, \, i \text{ even}, n \text{ odd}
\end{array}
\\[1mm]
\begin{tikzpicture}[baseline=-0.35em,line width=0.7pt,scale=75/100]
\draw[thick] (1,.3) -- (.5,.3) -- (0,0) -- (.5,-.3) -- (1,-.3);
\draw[thick] (0,0) -- (-.5,0);
\draw[<->,gray] (1,.2) -- (1,-.2);
\draw[<->,gray] (.5,.2) -- (.5,-.2);
\filldraw[fill=black] (1,.3) circle (.1);
\filldraw[fill=black] (1,-.3) circle (.1);
\filldraw[fill=black] (.5,.3) circle (.1);
\filldraw[fill=black] (.5,-.3) circle (.1);
\filldraw[fill=black] (0,0) circle (.1);
\filldraw[fill=white] (-.5,0) circle (.1) node[left]{\scriptsize $i$};
\end{tikzpicture}
\qq \qq
\begin{tikzpicture}[baseline=-0.35em,line width=0.7pt,scale=75/100]
\draw[thick] (-.9,.3) -- (-.4,.3) -- (0,0) -- (-.6,-.3) -- (-1.6,-.3);
\draw[thick] (0,0) -- (.5,0);
\filldraw[fill=white] (-.9,.3) circle (.1);
\filldraw[fill=black] (-.4,.3) circle (.1);
\filldraw[fill=black] (0,0) circle (.1);
\filldraw[fill=black] (-.6,-.3) circle (.1);
\filldraw[fill=white] (-1.1,-.3) circle (.1) node[above]{\scriptsize$i$};
\filldraw[fill=black] (-1.6,-.3) circle (.1);
\filldraw[fill=black] (.5,0) circle (.1);
\end{tikzpicture}
\qq \qq
\begin{tikzpicture}[baseline=-0.35em,line width=0.7pt,scale=75/100]
\draw[thick] (-.9,.3) -- (-.4,.3) -- (0,0) -- (-.6,-.3) -- (-1.6,-.3);
\draw[thick] (0,0) -- (.5,0);
\filldraw[fill=white] (-.9,.3) circle (.1) node[left]{\scriptsize $i$};
\filldraw[fill=black] (-.4,.3) circle (.1);
\filldraw[fill=black] (0,0) circle (.1);
\filldraw[fill=black] (-.6,-.3) circle (.1);
\filldraw[fill=black] (-1.1,-.3) circle (.1);
\filldraw[fill=black] (-1.6,-.3) circle (.1);
\filldraw[fill=black] (.5,0) circle (.1);
\end{tikzpicture}
\qq \qq
\begin{tikzpicture}[baseline=-0.35em,line width=0.7pt,scale=75/100]
\draw[thick] (-2.1,-.3) -- (-.6,-.3) -- (0,0) -- (-.4,.3) -- (-.9,.3);
\draw[thick] (0,0) -- (.5,0);
\filldraw[fill=white] (-.9,.3) circle (.1) node[left]{\scriptsize $i$};
\filldraw[fill=black] (.5,0) circle (.1);
\filldraw[fill=black] (-.4,.3) circle (.1);
\filldraw[fill=black] (0,0) circle (.1);
\filldraw[fill=black] (-.6,-.3) circle (.1);
\filldraw[fill=black] (-1.1,-.3) circle (.1);
\filldraw[fill=black] (-1.6,-.3) circle (.1);
\filldraw[fill=white] (-2.1,-.3) circle (.1);
\end{tikzpicture}
\qq \qq
\begin{tikzpicture}[baseline=-0.35em,line width=0.7pt,scale=75/100]
\draw[thick] (-2.1,-.3) -- (-.6,-.3) -- (0,0) -- (-.4,.3) -- (-.9,.3);
\draw[thick] (0,0) -- (.5,0);
\filldraw[fill=black] (-.9,.3) circle (.1);
\filldraw[fill=black] (.5,0) circle (.1);
\filldraw[fill=black] (-.4,.3) circle (.1);
\filldraw[fill=black] (0,0) circle (.1);
\filldraw[fill=black] (-.6,-.3) circle (.1);
\filldraw[fill=black] (-1.1,-.3) circle (.1);
\filldraw[fill=black] (-1.6,-.3) circle (.1);
\filldraw[fill=white] (-2.1,-.3) circle (.1) node[left]{\scriptsize $i$};
\end{tikzpicture}
\\[2mm]
\begin{tikzpicture}[baseline=-0.35em,line width=0.7pt,scale=75/100]
\draw[thick] (.5,0) -- (1,0);
\draw[double,->] (1,0) -- (1.45,0);
\draw[thick] (1.5,0) -- (2,0);
\filldraw[fill=white] (.5,0) circle (.1);
\filldraw[fill=black] (1,0) circle (.1);
\filldraw[fill=black] (1.5,0) circle (.1);
\filldraw[fill=white] (2,0) circle (.1) node[right]{\scriptsize $i$};
\end{tikzpicture}
\qq \qq
\begin{tikzpicture}[baseline=-0.35em,line width=0.7pt,scale=75/100]
\draw[thick] (.5,0) -- (1,0);
\draw[double,->] (1,0) -- (1.45,0);
\draw[thick] (1.5,0) -- (2,0);
\filldraw[fill=white] (.5,0) circle (.1) node[left]{\scriptsize $i$};
\filldraw[fill=black] (1,0) circle (.1);
\filldraw[fill=black] (1.5,0) circle (.1);
\filldraw[fill=black] (2,0) circle (.1);
\end{tikzpicture}
\qq \qq \qq \qq
\begin{tikzpicture}[baseline=-0.35em,line width=0.7pt,scale=75/100]
\draw[double,->] (.9,0) -- (.95,0);
\draw[line width=.5pt] (.5,0) -- (1,0);
\draw[line width=.5pt] (.5,0.05) -- (1,0.05);
\draw[line width=.5pt] (.5,-0.05) -- (1,-0.05);
\filldraw[fill=white] (.5,0) circle (.1) node[left]{\scriptsize $i$};
\filldraw[fill=black] (1,0) circle (.1);
\end{tikzpicture} 
\qq \qq
\begin{tikzpicture}[baseline=-0.35em,line width=0.7pt,scale=75/100]
\draw[double,->] (.9,0) -- (.95,0);
\draw[line width=.5pt] (.5,0) -- (1,0);
\draw[line width=.5pt] (.5,0.05) -- (1,0.05);
\draw[line width=.5pt] (.5,-0.05) -- (1,-0.05);
\filldraw[fill=black] (.5,0) circle (.1);
\filldraw[fill=white] (1,0) circle (.1) node[right]{\scriptsize $i$};
\end{tikzpicture} 
\end{gather*}
\vspace{-1.5em}

\end{table}

Consider the real vector space $V = \R \Phi$.
For fixed $\theta \in \Autinv(\Phi)$ we have the decomposition $V = V^\theta \oplus V^{-\theta}$.
Denote by $\overline{\:\: \vphantom{\al}}: V \to V$ the corresponding projection onto $V^{-\theta}$.
Now consider the \emph{restricted Weyl group} and the set of \emph{restricted roots}
\eq{
\wb{W} = \{ w|_{V^{-\theta}} \,\, | \,\, w \in W, \, w(V^{-\theta}) \subseteq V^{-\theta} \}, \qq 
\wb{\Phi} = \{ \wb{\al} \,\, | \,\, \al \in \Phi \} \backslash \{0\}.
}
If $\theta = \theta(X,\tau)$ with $(X,\tau) \in \CD(A)$ then $W_X$ is a normal subgroup of $W^\theta = \{ w \in W \, | \, \theta w = w \theta \}$.
By \cite[Prop.~3.1]{He84} we have $\wb{W} \cong W^\theta / W_X$.
For $i \in I^*$ denote $X[i] = X \cup \{ i, \tau(i)\}$ and let $\wb{s}_i \in \GL(V^{-\theta})$ be the element that sends $\wb{\al}_i$ to $-\wb{\al}_i$ and fixes all $\beta \in V^{-\theta}$ with $\beta(h_i) = 0$.

\enlargethispage{1.2em} % this is to ensure the theorem stays on the same page

\begin{thrm}[\cite{He84} and \cite{Lu76}] \label{thrm:Heck}
Let $(X,\tau) \in \CD(A)$.
The following are equivalent:
\begin{enumerate}
\item We have $(X,\tau) \in \GSat(A)$.
\item For all $i \in I^*$, $\wb{s}_i \in \wb{W}$.
\item For all $i \in I^*$, $\wt s_i := w_X w_{X[i]}$ lies in $W^\theta$ and satisfies $\wt s_i|_{V^{-\theta}} = \wb{s}_i$.
\item For all $i \in I^*$, {$\tau_{0,X[i]}$} preserves $X$.
\item The restricted Weyl group $\wb{W}$ is the Weyl group of $\wb{\Phi}$.
\item The set $\{ \wt s_i \, | \, i \in I^* \}$ is a Coxeter system for the group it generates.
\end{enumerate}
\end{thrm}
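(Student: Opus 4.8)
The plan is to establish the six conditions as a cycle, say $(4)\Rightarrow(3)\Rightarrow(2)\Rightarrow(5)\Rightarrow(6)\Rightarrow(4)$, supplemented by the separate equivalence $(1)\iff(4)$. The algebraic backbone is a short calculation with the elements $\wt s_i=w_Xw_{X[i]}$. Since $\tau$ is a diagram automorphism preserving both $X$ and $X[i]$, it fixes the longest elements $w_X$ and $w_{X[i]}$ under conjugation, so $\tau$ commutes with each of them. Writing $\theta=-w_X\tau$ on $V$ and using $\theta^{-1}=\theta$, these commutations give
\[
\theta\,\wt s_i\,\theta^{-1}=w_X\tau\,w_Xw_{X[i]}\,w_X\tau=w_{X[i]}w_X=\wt s_i^{-1},
\]
so $\wt s_i\in W^\theta$ if and only if $\wt s_i$ is an involution. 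As $w_X$ and $w_{X[i]}$ are themselves involutions, $\wt s_i^2=1$ is equivalent to $w_X$ and $w_{X[i]}$ commuting; and conjugating the longest element $w_X$ of the parabolic subgroup $W_X$ by $w_{X[i]}$ produces the longest element of $W_{\tau_{0,X[i]}(X)}$, which equals $w_X$ exactly when $\tau_{0,X[i]}(X)=X$. This already yields the membership half of $(3)\Leftrightarrow(4)$, as well as $(6)\Rightarrow(4)$, since the generators of a Coxeter system are involutions.

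To complete $(4)\Rightarrow(3)$ I would show that, whenever $\wt s_i\in W^\theta$, its restriction to $V^{-\theta}$ is the reflection $\wb s_i$. The central observation is that $V^{-\theta}$ pairs trivially with $\mfh^\theta$: for $\lambda\in V^{-\theta}$ and $h\in\mfh^\theta$ one has $\lambda(h)=(\theta\lambda)(\theta h)=-\lambda(h)$. Since $h_k\in\mfh^\theta$ for $k\in X$ and $h_i-h_{\tau(i)}\in\mfh^\theta$ by \eqref{theta:h}, any $\beta\in V^{-\theta}$ with $\beta(h_i)=0$ is annihilated by all of $\mfh_{X[i]}$, hence fixed by $w_X$ and $w_{X[i]}$, hence by $\wt s_i$. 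On the other hand $\wb{\al}_k=0$ for $k\in X$, and $\wb{\al}_{\tau(i)}=\wb{\al}_i$ because their difference lies in both $V^{-\theta}$ and the span of $\{\al_k\}_{k\in X}$, which meet trivially as $A_X$ is nondegenerate; combined with $w_{X[i]}(\al_i)=-\al_{\tau_{0,X[i]}(i)}$ and $\tau_{0,X[i]}(i)\in\{i,\tau(i)\}$ (as (4) makes $\tau_{0,X[i]}$ permute $\{i,\tau(i)\}$), this gives $\wt s_i(\wb{\al}_i)=-\wb{\al}_i$. These two properties characterize $\wb s_i$, so $(3)\Leftrightarrow(4)$; and $(3)\Rightarrow(2)$ is immediate, since an element of $W^\theta$ preserves $V^{-\theta}$ and $\wb s_i$ is its restriction.

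For $(1)\iff(4)$ I would use the factorization of $\tau_{0,X[i]}$ over the connected components of $X[i]$ to reduce condition (4) for a given $i$ to the single component $C$ meeting $\{i,\tau(i)\}$, where it reads $\tau_{0,C}(C\cap X)=C\cap X$. Here the compatibility requirement $\tau|_X=\tau_{0,X}$ is indispensable and does most of the work, sharply limiting which decorated components $C$ can occur. Running through the irreducible types whose longest element is not $-\id$ (types ${\rm A}_n$, ${\rm D}_n$ with $n$ odd, and ${\rm E}_6$) and invoking the reformulation \eqref{GSat:rephrased}, one checks that $\tau_{0,C}$ moves a black node out of $X$ precisely in the excluded configuration $\circ\!-\!\bullet$ of type ${\rm A}_2$ with one white and one black node. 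This is a finite, if slightly tedious, verification, essentially the content of Heck's Table I.

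The remaining links $(2)\Rightarrow(5)\Rightarrow(6)$ carry the genuine content of Heck and Lusztig, and I expect them to be the main obstacle. One must know that $\{\wb{\al}_i\mid i\in I^*\}$ is a base of the restricted root system $\wb\Phi$ and, crucially, that $\wb W$ is generated by the reflections in $\wb\Phi$ that it contains; granting this, condition (2) forces $\wb W$ to be the Weyl group of $\wb\Phi$, which is (5). Transferring the resulting Coxeter presentation back to the lifts $\wt s_i$ through the isomorphism $\wb W\cong W^\theta/W_X$ of \cite[Prop.~3.1]{He84} then yields (6); here one uses that $W_X$ acts trivially on $V^{-\theta}$ (a by-product of the pairing computation above) to control the kernel $\langle\wt s_i\rangle\cap W_X$ and check that the transfer is faithful. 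The delicate ingredient is the generation of $\wb W$ by the $\wb s_i$ — equivalently, the structure of the fixed-point subgroup $W^\theta$ — which is exactly what \cite{Lu76} supplies, and whose careful adaptation to the generalized Satake setting is the crux.
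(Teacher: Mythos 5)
Your proposal is correct in substance and, like the paper's own proof, ultimately rests on \cite{He84} and \cite{Lu76} for the deep implications, but the division of labour is genuinely different. The paper cites Heck wholesale for the equivalence of (ii), (iii), (iv), (v), cites Lusztig for (iv) $\Rightarrow$ (vi), and only writes out (vi) $\Rightarrow$ (iv) (via non-commutation of $w_X$ and $w_{X[i]}$) and (i) $\Leftrightarrow$ (iv) (via the component-wise case check) --- exactly the two arguments you also give. What you add is a self-contained proof of (iv) $\Rightarrow$ (iii) $\Rightarrow$ (ii) built on the identity $\theta\, \wt s_i\, \theta^{-1} = \wt s_i^{-1}$, which cleanly reduces $\wt s_i \in W^\theta$ to $\wt s_i^2 = \id$ and hence to $\tau_{0,X[i]}(X)=X$, together with the observation that $V^{-\theta}$ pairs trivially with $\mfh^\theta$; this replaces the citation to \cite[Lemma 3.2]{He84} by a short direct computation and is a genuine gain in self-containedness. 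Two soft spots should be tightened. First, your route $(5)\Rightarrow(6)$ through $\wb{W} \cong W^\theta/W_X$ needs $\langle \wt s_i \mid i \in I^* \rangle \cap W_X = \{1\}$, which you flag but do not establish; it does follow, for instance, from the fact that each $\wt s_i$ preserves $\Phi_X \cap \Phi^+$ (since $w_{X[i]}$ maps it to $-\,(\Phi_X \cap \Phi^+)$ when $\tau_{0,X[i]}(X)=X$) while every nontrivial element of $W_X$ makes some root of $\Phi_X \cap \Phi^+$ negative --- but note that \cite[5.9 (i)]{Lu76} proves (iv) $\Rightarrow$ (vi) directly, which is the cleaner deferral the paper adopts and which sidesteps the quotient argument entirely. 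Second, the finite verification behind (1) $\Leftrightarrow$ (4) is asserted rather than carried out; the paper lists the seven relevant connected decorated diagrams with $\tau_{0,X[i]} \ne \id$ explicitly and checks that only the two-node configuration of type ${\rm A}_2$ with one white and one black node fails, which is precisely the configuration excluded in Definition \ref{defn:GSat}, so your identification of the unique failing case is right but should be supported by that list.
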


\begin{proof}
The equivalence of the statements (ii), (iii), (iv) and (v) is shown in \cite[Lemma 3.2, Thm.~3.3, Thm.~4.4]{He84}.
The implication (iv) $\implies$ (vi) is shown in \cite[5.9 (i)]{Lu76} (also see \cite[25.1]{Lu02}).
Its converse follows by noting that if (iv) fails then for some $i \in I^*$, $w_{X[i]}$ and $w_{X}$ do not commute so that $\wt s_i^2 \ne \id_V$.
Finally, to show (i) $\Leftrightarrow$ (iv), note that by factorizability of $\tau_{0,X[i]}$ over connected components of $X[i]$, without loss of generality we may restrict to the case where $X[i]$ is connected and equals $I$.
Since there is nothing to prove if $\tau_{0,X[i]} = \id$, it remains to check the cases where $X[i]$ is of type ${\rm A}_n$ with $n>1$, ${\rm D}_n$ with $n>4$ odd or ${\rm E}_6$.
Classifying all $(X,\tau) \in \CD(A)$ such that $I = X[i]$ for some $i \in I^*$, $I$ is connected and $\tau_{0,X[i]} \ne \id$ we obtain the following diagrams in the top row:
\[
\begin{array}{c>{\qu}c>{\qu}c>{\qu}c>{\qu}c>{\qu}c>{\qu}c}
\begin{tikzpicture}[baseline=-.3em,line width=0.7pt,scale=75/100]
\draw[thick] (0,0) -- (.5,0);
\filldraw[fill=white] (0,0) circle (.1);
\filldraw[fill=black] (.5,0) circle (.1);
\end{tikzpicture}
&
\begin{tikzpicture}[baseline=-.3em,line width=0.7pt,scale=75/100]
\draw[thick] (0,0) -- (1,0);
\filldraw[fill=black] (0,0) circle (.1);
\filldraw[fill=white] (.5,0) circle (.1);
\filldraw[fill=black] (1,0) circle (.1);
\end{tikzpicture}
&
\begin{tikzpicture}[baseline=-.3em,line width=0.7pt,scale=75/100]
\draw[thick] (0,.3) -- (.5,.3);
\draw[thick,dashed] (.5,.3) -- (1.5,.3);
\draw[thick] (1.5,.3) -- (2,0) -- (1.5,-.3);
\draw[thick] (0,-.3) -- (.5,-.3);
\draw[thick,dashed] (.5,-.3) -- (1.5,-.3);
\filldraw[fill=white] (0,.3) circle (.1);
\filldraw[fill=white] (0,-.3) circle (.1);
\draw[<->,gray] (0,.2) -- (0,-.2);
\filldraw[fill=black] (.5,.3) circle (.1);
\filldraw[fill=black] (.5,-.3) circle (.1);
\draw[<->,gray] (.5,.2) -- (.5,-.2);
\filldraw[fill=black] (1.5,.3) circle (.1);
\filldraw[fill=black] (1.5,-.3) circle (.1);
\draw[<->,gray] (1.5,.2) -- (1.5,-.2);
\filldraw[fill=black] (2,0) circle (.1);
\end{tikzpicture} 
&
\begin{tikzpicture}[baseline=-.3em,line width=0.7pt,scale=75/100]
\draw[thick] (0,.3) -- (.5,.3);
\draw[thick,dashed] (.5,.3) -- (1.5,.3);
\draw[thick,domain=270:450] plot({1.5+.3*cos(\x)},{.3*sin(\x)});
\draw[thick] (0,-.3) -- (.5,-.3);
\draw[thick,dashed] (.5,-.3) -- (1.5,-.3);
\filldraw[fill=white] (0,.3) circle (.1);
\filldraw[fill=white] (0,-.3) circle (.1);
\draw[<->,gray] (0,.2) -- (0,-.2);
\filldraw[fill=black] (.5,.3) circle (.1);
\filldraw[fill=black] (.5,-.3) circle (.1);
\draw[<->,gray] (.5,.2) -- (.5,-.2);
\filldraw[fill=black] (1.5,.3) circle (.1);
\filldraw[fill=black] (1.5,-.3) circle (.1);
\draw[<->,gray] (1.5,.2) -- (1.5,-.2);
\end{tikzpicture}
&
\begin{tikzpicture}[baseline=-0.25em,line width=0.7pt,scale=75/100]
\draw[thick] (2.5,0) -- (3.5,0);
\draw[thick,dashed] (3.5,0) -- (4.5,0);
\draw[thick] (5,.3) -- (4.5,0) -- (5,-.3);
\filldraw[fill=white] (2.5,0) circle (.1);
\filldraw[fill=black] (3,0) circle (.1);
\filldraw[fill=black] (3.5,0) circle (.1);
\filldraw[fill=black] (4.5,0) circle (.1);
\filldraw[fill=black] (5,.3) circle (.1);
\filldraw[fill=black] (5,-.3) circle (.1);
\end{tikzpicture}
&
\begin{tikzpicture}[baseline=-0.25em,line width=0.7pt,scale=75/100]
\draw[thick] (2.5,0) -- (3.5,0);
\draw[thick,dashed] (3.5,0) -- (4.5,0);
\draw[thick] (5,.3) -- (4.5,0) -- (5,-.3);
\draw[<->,gray] (5,.2) -- (5,-.2);
\filldraw[fill=black] (2.5,0) circle (.1);
\filldraw[fill=white] (3,0) circle (.1);
\filldraw[fill=black] (3.5,0) circle (.1);
\filldraw[fill=black] (4.5,0) circle (.1);
\filldraw[fill=black] (5,.3) circle (.1);
\filldraw[fill=black] (5,-.3) circle (.1);
\end{tikzpicture}
&
\begin{tikzpicture}[baseline=-.3em,line width=0.7pt,scale=75/100]
\draw[thick] (1,.3) -- (.5,.3) -- (0,0) -- (.5,-.3) -- (1,-.3);
\draw[thick] (0,0) -- (-.5,0);
\draw[<->,gray] (1,.2) -- (1,-.2);
\draw[<->,gray] (.5,.2) -- (.5,-.2);
\filldraw[fill=black] (1,.3) circle (.1);
\filldraw[fill=black] (1,-.3) circle (.1);
\filldraw[fill=black] (.5,.3) circle (.1);
\filldraw[fill=black] (.5,-.3) circle (.1);
\filldraw[fill=black] (0,0) circle (.1);
\filldraw[fill=white] (-.5,0) circle (.1);
\end{tikzpicture}
\\[4mm]
\begin{tikzpicture}[baseline=-.3em,line width=0.7pt,scale=75/100]
\draw[thick] (0,0) -- (.5,0);
\draw[<->,gray] (0,.1) .. controls (0,.5) and (.5,.5) .. (.5,.1);
\filldraw[fill=black] (0,0) circle (.1);
\filldraw[fill=black] (.5,0) circle (.1);
\end{tikzpicture}
&
\begin{tikzpicture}[baseline=-.3em,line width=0.7pt,scale=75/100]
\draw[thick] (0,0) -- (1,0);
\draw[<->,gray] (0,.1) .. controls (0,.5) and (1,.5) .. (1,.1);
\filldraw[fill=black] (0,0) circle (.1);
\filldraw[fill=black] (.5,0) circle (.1);
\filldraw[fill=black] (1,0) circle (.1);
\end{tikzpicture}
&
\begin{tikzpicture}[baseline=-.3em,line width=0.7pt,scale=75/100]
\draw[thick] (0,.3) -- (.5,.3);
\draw[thick,dashed] (.5,.3) -- (1.5,.3);
\draw[thick] (1.5,.3) -- (2,0) -- (1.5,-.3);
\draw[thick] (0,-.3) -- (.5,-.3);
\draw[thick,dashed] (.5,-.3) -- (1.5,-.3);
\filldraw[fill=black] (0,.3) circle (.1);
\filldraw[fill=black] (0,-.3) circle (.1);
\draw[<->,gray] (0,.2) -- (0,-.2);
\filldraw[fill=black] (.5,.3) circle (.1);
\filldraw[fill=black] (.5,-.3) circle (.1);
\draw[<->,gray] (.5,.2) -- (.5,-.2);
\filldraw[fill=black] (1.5,.3) circle (.1);
\filldraw[fill=black] (1.5,-.3) circle (.1);
\draw[<->,gray] (1.5,.2) -- (1.5,-.2);
\filldraw[fill=black] (2,0) circle (.1);
\end{tikzpicture} 
&
\begin{tikzpicture}[baseline=-.3em,line width=0.7pt,scale=75/100]
\draw[thick] (0,.3) -- (.5,.3);
\draw[thick,dashed] (.5,.3) -- (1.5,.3);
\draw[thick,domain=270:450] plot({1.5+.3*cos(\x)},{.3*sin(\x)});
\draw[thick] (0,-.3) -- (.5,-.3);
\draw[thick,dashed] (.5,-.3) -- (1.5,-.3);
\filldraw[fill=black] (0,.3) circle (.1);
\filldraw[fill=black] (0,-.3) circle (.1);
\draw[<->,gray] (0,.2) -- (0,-.2);
\filldraw[fill=black] (.5,.3) circle (.1);
\filldraw[fill=black] (.5,-.3) circle (.1);
\draw[<->,gray] (.5,.2) -- (.5,-.2);
\filldraw[fill=black] (1.5,.3) circle (.1);
\filldraw[fill=black] (1.5,-.3) circle (.1);
\draw[<->,gray] (1.5,.2) -- (1.5,-.2);
\end{tikzpicture}
&
\begin{tikzpicture}[baseline=-0.25em,line width=0.7pt,scale=75/100]
\draw[thick] (2.5,0) -- (3.5,0);
\draw[thick,dashed] (3.5,0) -- (4.5,0);
\draw[thick] (5,.3) -- (4.5,0) -- (5,-.3);
\draw[<->,gray] (5,.2) -- (5,-.2);
\filldraw[fill=black] (2.5,0) circle (.1);
\filldraw[fill=black] (3,0) circle (.1);
\filldraw[fill=black] (3.5,0) circle (.1);
\filldraw[fill=black] (4.5,0) circle (.1);
\filldraw[fill=black] (5,.3) circle (.1);
\filldraw[fill=black] (5,-.3) circle (.1);
\end{tikzpicture}
&
\begin{tikzpicture}[baseline=-0.25em,line width=0.7pt,scale=75/100]
\draw[thick] (2.5,0) -- (3.5,0);
\draw[thick,dashed] (3.5,0) -- (4.5,0);
\draw[thick] (5,.3) -- (4.5,0) -- (5,-.3);
\draw[<->,gray] (5,.2) -- (5,-.2);
\filldraw[fill=black] (2.5,0) circle (.1);
\filldraw[fill=black] (3,0) circle (.1);
\filldraw[fill=black] (3.5,0) circle (.1);
\filldraw[fill=black] (4.5,0) circle (.1);
\filldraw[fill=black] (5,.3) circle (.1);
\filldraw[fill=black] (5,-.3) circle (.1);
\end{tikzpicture}
&
\begin{tikzpicture}[baseline=-.3em,line width=0.7pt,scale=75/100]
\draw[thick] (1,.3) -- (.5,.3) -- (0,0) -- (.5,-.3) -- (1,-.3);
\draw[thick] (0,0) -- (-.5,0);
\draw[<->,gray] (1,.2) -- (1,-.2);
\draw[<->,gray] (.5,.2) -- (.5,-.2);
\filldraw[fill=black] (1,.3) circle (.1);
\filldraw[fill=black] (1,-.3) circle (.1);
\filldraw[fill=black] (.5,.3) circle (.1);
\filldraw[fill=black] (.5,-.3) circle (.1);
\filldraw[fill=black] (0,0) circle (.1);
\filldraw[fill=black] (-.5,0) circle (.1);
\end{tikzpicture}
\end{array}
\]
The bottom row shows the corresponding compatible decoration $(X[i],\tau_{0,X[i]})$.
The first case is not in $\GSat(A)$ and for the remaining six cases the subset $X$ is preserved by $\tau_{0,X[i]}$.
\end{proof}

\begin{rmk}
Note that $\wb{\Phi}$ is not always a root system.
By \cite[Thm.~6.1]{He84}, $\wb{\Phi}$ is a (possibly non-reduced or empty) root system precisely if $\tau_{0,X[i]}$ preserves $X$ for all $i \in I^*$ or $(X,\tau) = \begin{tikzpicture}[baseline=-0.3em,line width=0.7pt,scale=75/100]
\draw[thick] (.5,0) -- (1,0);
\filldraw[fill=white] (.5,0) circle (.1);
\filldraw[fill=black] (1,0) circle (.1);
\end{tikzpicture}$. 
\hfill \rmkend
\end{rmk}

%%%%%%%%%%%%%%%%%%%%%%%%%%%%%%%%%%%%%%%%%%%%%%%%%%%%%%%%%%%%%%%%%%%%%%%%%%%%%%
% Section 3
%%%%%%%%%%%%%%%%%%%%%%%%%%%%%%%%%%%%%%%%%%%%%%%%%%%%%%%%%%%%%%%%%%%%%%%%%%%%%%

\section{The subalgebra $\mfk$} \label{sec:k}

For $(X,\tau) \in \Sat(A)$ the subalgebra $\mfg^\theta$ can be described in terms of generators; see e.g.~\cite[Lemma 2.8]{Ko14}.
This motivates the following more general definition.

\begin{defn} \label{defn:k}
Let $(X,\tau) \in \CD(A)$.
For $\bm \ga \in \C^{I \backslash X}$ let $\mfk_{\bm \ga} = \mfk_{\bm \ga}(X,\tau)$ be the Lie subalgebra of $\mfg$ generated by $\mfg_X$, $\mfh^\theta$ and, for all $i \in I \backslash X$,
\begin{flalign}
\label{bi:def} 
&& b_{i;\ga_i} := f_i + \ga_i \,\theta(f_i).
&& \defnend
\end{flalign}
\end{defn}

It is convenient to suppress the dependence on $\bm \ga$ and simply write $b_i$ and $\mfk$ if there is no cause for confusion.
We denote $b_i = f_i$ if $i \in X$.
Note that $\mfh_X \subseteq \mfh^\theta$.
It follows that $\mfk$ is generated by $\mfn^+_X := \{ e_i \, | \, i \in X\}$, $\mfh^\theta$ and $b_i$ for $i \in I$.
Owing to \eqrefs{g:rel1}{g:Serre} and \eqref{theta:basic}, these satisfy
\ali{
\label{k:rels1} [e_i,b_j] &= \del_{ij} h_i \in \mfh^\theta &\qq & \text{for all } i \in X, \, j \in I, \\
\label{k:rels2} [h,b_j] &= - \al_j(h)\, b_j && \text{for all } h \in \mfh^\theta, \, j \in I, \\
\label{k:rels3} [h,e_j] &= \al_j(h)\, e_j && \text{for all } h \in \mfh^\theta, \, j \in X, \\
\label{k:rels4} [h,h'] &= 0 && \text{for all }h,h' \in \mfh^\theta, \\
\label{k:rels5} \ad(e_i)^{1-a_{ij}}(e_j) &= 0 && \text{for all } i,j \in X, \, i \ne j.
}
In Appendix \ref{appendix} we study the repeated adjoint action of $b_i$ on $b_j$ for $i,j \in I$ such that $i \ne j$.
By setting $m=M_{ij}$ in Lemmas \eqrefs{lem:biij:a}{lem:biij:c} one obtains the following Serre-type relations for the generators $b_i$:
\eq{  \label{k:Serre}
\hspace{-3pt} \ad(b_i)^{M_{ij}}(b_j) = 
\begin{cases}
(1+\zeta(\al_i)) \, \ga_i\, [\theta(f_i),[f_i,f_j]] \in \mfn^+_X  \hspace{-5pt}
& \text{if } \theta(\al_i)+\al_i+\al_j \in \Phi^-, \, a_{ij}=-1, \hspace{-5pt} \\
-18 \ga_i^2\, e_j 
& \text{if } \theta(\al_i)+\al_i+\al_j=0, \, a_{ij}=-3,  \\
-\ga_i\, (2h_i+h_j)
& \text{if } \theta(\al_i)+\al_i+\al_j=0, \, a_{ij}=-1,  \\
\big( \ga_i +  \zeta(\al_i) \ga_j \big)\, [\theta(f_i),f_j] \in \mfn^+_X \hspace{-5pt}
& \text{if } \theta(\al_i)+\al_j \in \Phi^-, \,a_{ij}=0, \\
\ga_j h_i - \ga_i\, h_j
& \text{if } \theta(\al_i)+\al_j=0, \, a_{ij}=0, \\
2\,(\ga_i+\ga_j)\, b_i 
& \text{if } \theta(\al_i)+\al_j=0, \, a_{ij}=-1, \\
\displaystyle\sum_{r=1}^{\big\lfloor \tfrac{M_{ij}}{2} \big\rfloor} p_{ij}^{(r,M_{ij})} \ga_i^r \ad(b_i)^{M_{ij}-2r}(b_j) \hspace{-5pt}
& \text{if } \theta(\al_i)+\al_i=0, \, j \in I \backslash X, \\
0 & \text{otherwise}.
\end{cases} 
\hspace{-5pt} 
}
For $i,j \in I$ such that $i \ne j$ and $m,r \in \Z_{\ge 0}$ we have defined $p_{ij}^{(r,m)} \in \Z$ by setting $p_{ij}^{(r,m)} = 0$ if $r> \lfloor m/2 \rfloor$, $p_{ij}^{(0,m)} = -1$ and
\eq{ \label{pij:def} 
p_{ij}^{(r,m)} = p_{ij}^{(r,m-1)} +  (m-1) (M_{ij}+1-m)\, p_{ij}^{(r-1,m-2)} \qq \text{if } 0<r \le \lfloor m/2 \rfloor.
}
By induction with respect to $m$, it can be shown that these integers satisfy
\eq{ \label{pij:negative}
p_{ij}^{(r,m)} <0 \qq \text{if } 0 \le 2r \le m \le M_{ij}.
}
Indeed, \eqref{pij:negative} is true for $m\in \{0,1\}$.
Suppose $0 \le 2r \le m$ and $1< m \le M_{ij}$ and assume \eqref{pij:negative} holds with $m$ replaced by $m-1$ and by $m-2$.
If $p_{ij}^{(r,m-1)} = 0$ we must have $r=m/2$ so that $p_{ij}^{(r-1,m-2)} < 0$.
Hence at least one of $p_{ij}^{(r,m-1)}, p_{ij}^{(r-1,m-2)}$ is nonzero and the observation that $(m-1) (M_{ij}+1-m) > 0$ completes the induction step.

As $\mfg$ is of finite type, $M_{ij} \in \{ 1,2,3,4\}$.
Hence the penultimate case of \eqref{k:Serre} amounts to
\[
\ad(b_i)^{M_{ij}}(b_j) = \begin{cases}
0 & \text{if } a_{ij}=0 \\
-\ga_i b_j & \text{if } a_{ij} = -1 \\
- 4 \ga_i\, [b_i,b_j] & \text{if } a_{ij}=-2 \\
- 9 \ga_i^2 b_j - 10 \ga_i\, [b_i,[b_i,b_j]] & \text{if } a_{ij}=-3
\end{cases}
\qu \text{and } \theta(\al_i)+\al_i = 0, \, j \in I \backslash X.
\]

\begin{rmk} \mbox{}
\begin{enumerate}
\item 
Definition \ref{defn:k} can be used in the general Kac-Moody case, so that \eqrefs{k:rels1}{k:rels5} still hold.
Also the results of Appendix \ref{appendix} are valid in this general setting and hence so is \eqref{k:Serre}.
We will discuss the subalgebra $\mfk(X,\tau)$ in the Kac-Moody setting in future work.

\item
The relations \eqref{k:Serre} entail that $\ad(b_i)^{M_{ij}}(b_j) = 0$ if $i \in X$ or if $\tau(i) \notin \{ i,j \}$ as required by the specialization of \cite[Eq.~(5.20) and Thm.~7.3]{Ko14}.
\hfill \rmkend
\end{enumerate}
\end{rmk}

%%%%%%%%%%%%%%%%%%%%%%%%%%%%%%%%%%%%%%%%%%%%%%%%%%%%%%%%%%%%%%%%%%%%%%%%%%%%%%

\subsection{Basic structure of $\mfk$}

From now on we assume that the $\ga_i$ are nonzero.
In order to state the main result of this section, we need some notation.
For all $i,j \in I$ such that $i \ne j$ denote $\la_{ij} := M_{ij}\, \al_i + \al_j \in Q^+ \backslash \Phi$ and consider the sets
\begin{align}
\Idiff &= \{ i \in I^* \,\,|\,\,  i \ne \tau(i) \text{ and } (\theta(\al_i))(h_i) \ne 0 \} \\
\nonumber &=  \{ i \in I^* \,\,|\,\,  i \ne \tau(i) \text{ and } \exists j \in X[i] {\rm \,\;s.t.\;} a_{ij} < 0 \} \\
\Ga &= \Ga(X,\tau) =  \{ \bm \ga \in (\C^\times)^{I \backslash X} \,\, | \,\, \ga_i = \ga_{\tau(i)}  \text{ if } i \in I^* \backslash \Idiff \}.
\end{align}
For $\bm i \in I^\ell$ with $\ell \in \Z_{>0}$ we write $\al_{\bm i} = \sum_{r=1}^\ell \al_{i_r}$ and
\[ 
b_{\bm i} = \ad(b_{i_1}) \cdots \ad(b_{i_{\ell-1}}) (b_{i_\ell}), \qu
e_{\bm i} = \ad(e_{i_1}) \cdots \ad(e_{i_{\ell-1}}) (e_{i_\ell}), \qu
f_{\bm i} = \ad(f_{i_1}) \cdots \ad(f_{i_{\ell-1}}) (f_{i_\ell}).
\]
Observe that 
\[
\mfn^- = \Sp \{ f_{\bm i} \, | \, \bm i \in I^\ell, \, \ell>0 \}, \qq \qq \mfn^+_X = \Sp \{ e_{\bm i} \, | \, \bm i \in X^\ell, \, \ell>0 \}.
\]
Hence for all $\ell \in \Z_{>0}$ we can choose $\mc{J}_\ell \subseteq I^\ell$ such that $\{ f_{\bm i} \}_{\bm i \in \mc{J}_\ell}$ is a basis for $\Sp \{ f_{\bm i} \}_{\bm i \in I^\ell}$ and $\{ e_{\bm i} \}_{\bm i \in \mc{J}_{X,\ell}}$ is a basis for $\Sp \{ e_{\bm i} \}_{\bm i \in X^\ell}$ where $\mc{J}_{X,\ell} := \mc{J}_\ell \cap X^\ell$.
Let
\eq{ \label{J:def}
\mc{J} := \bigcup_{\ell \in \Z_{>0}} \mc{J}_\ell, \qq \qq \mc{J}_X := \bigcup_{\ell \in \Z_{>0}} \mc{J}_{X,\ell}.
}
Then $\{ f_{\bm i} \}_{\bm i \in \mc{J}}$ is a basis of $\mfn^-$ and $\{ e_{\bm i} \}_{\bm i \in \mc{J}_X}$ is a basis of $\mfn^+_X$.

\begin{thrm} \label{mainthrm}
Let $(X,\tau) \in \CD(A)$ and $\bm \ga \in (\C^\times)^{I \backslash X}$.
The following statements are equivalent:
\begin{enumerate}
\item We have $(X,\tau) \in \GSat(A)$ and $\bm \ga \in \Ga$.
\item For all $i,j \in I$ such that $i \ne j$ we have
\eq{ \label{k:goodSerre}
\ad(b_i)^{M_{ij}}(b_j) \in \mfn^+_X \oplus \mfh^\theta \oplus \bigoplus_{\ell \in \Z_{> 0}} \bigoplus_{\bm k \in I^\ell, \, \al_{\bm k} < \la_{ij}} \C b_{\bm k}.
}
\item We have the following identity for $\mfh^\theta$-modules:
\eq{ \label{k:tridecomp} 
\mfk  = \mfn^+_X \oplus \mfh^\theta \oplus \bigoplus_{\bm i \in \mc{J}} \C b_{\bm i}.
}
\item We have 
\eq{ \label{k:intersection}
\mfk \cap \mfh = \mfh^\theta.
}
\end{enumerate}
\end{thrm}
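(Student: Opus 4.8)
The plan is to prove the four statements equivalent by establishing the cycle (i) $\Rightarrow$ (ii) $\Rightarrow$ (iii) $\Rightarrow$ (iv) $\Rightarrow$ (i). Two structural observations drive everything. First, $\mfk$ is $\mfh^\theta$-graded by restricted weights, since $b_i = f_i + \ga_i\theta(f_i)$ has restricted weight $-\al_i$. Second, for $i \in I \backslash X$ one has $\theta(\al_i) \in \Phi^-$, so $\theta(f_i) \in \mfn^+$ has weight $-\theta(\al_i) > -\al_i$; hence every $b_{\bm i}$ equals $f_{\bm i}$ plus terms of strictly higher weight, and its lowest-weight component is exactly $f_{\bm i}$. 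Because $\{f_{\bm i}\}_{\bm i \in \mc{J}}$ is a basis of $\mfn^-$, this lowest-weight bookkeeping shows, weight by weight, that the sum in \eqref{k:tridecomp} is automatically direct and that $\{b_{\bm i}\}_{\bm i \in \mc{J}}$ is linearly independent. Consequently (iii) reduces to the spanning claim $\mfk \subseteq \mfn^+_X \oplus \mfh^\theta \oplus \bigoplus_{\bm i \in \mc{J}} \C b_{\bm i}$, and (iii) $\Rightarrow$ (iv) is then immediate: writing $x \in \mfk \cap \mfh$ via \eqref{k:tridecomp} and projecting onto $\mfn^-$ forces all $b_{\bm i}$-coefficients to vanish, after which projecting onto $\mfn^+$ kills the $\mfn^+_X$-part, leaving $x \in \mfh^\theta$.

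For (i) $\Rightarrow$ (ii) I would run through the cases of \eqref{k:Serre} and check each lies in the right-hand side of \eqref{k:goodSerre}. The $\mfn^+_X$-valued cases are immediate, while the case valued in $b_i$ and the case involving $\sum_r p_{ij}^{(r,M_{ij})}$ produce elements $b_{\bm k}$ with $\al_{\bm k} < \la_{ij}$, as required. The two delicate cases are the $\mfh$-valued ones. The value $\ga_j h_i - \ga_i h_j$ occurs when $\theta(\al_i)+\al_j = 0$ and $a_{ij}=0$, which forces $j = \tau(i)$ and $i \in I^* \backslash \Idiff$; here $\bm\ga \in \Ga$ gives $\ga_i = \ga_{\tau(i)}$, so the value becomes a multiple of $h_i - h_{\tau(i)} \in \mfh^\theta$ by \eqref{theta:h}. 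The value $-\ga_i(2h_i+h_j)$ occurs when $\theta(\al_i)+\al_i+\al_j=0$ and $a_{ij}=-1$, which is precisely the configuration excluded by the generalized Satake condition in the form \eqref{GSat:rephrased}, so it does not arise. Finally the $a_{ij}=-3$ variant returning $e_j$ is vacuous in finite type, since $\theta(\al_i)+\al_i+\al_j=0$ with $\tau(i)=i$ forces $w_X(\al_i)=\al_i+\al_j$, which in turn forces $a_{ij}=-1$.

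The heart of the argument, and what I expect to be the main obstacle, is (ii) $\Rightarrow$ (iii). Write $M$ for the right-hand side of \eqref{k:tridecomp}; then $M \subseteq \mfk$ and, by the leading-term remark above, $M$ is a direct sum, so it suffices to show $M$ is a Lie subalgebra, whence $M = \mfk$ because $M$ contains the generating set $\{e_i\}_{i \in X}$, $\mfh^\theta$, $\{b_i\}_{i \in I}$. I would establish $\ad$-stability of $M$ by induction on bracket length, equivalently on weight with respect to the order $<$ on $Q^+$. Stability under $\ad(\mfh^\theta)$ is \eqref{k:rels2}, and stability under $\ad(e_i)$ ($i \in X$) follows by using \eqref{k:rels1} and the Jacobi identity to rewrite $[e_i, b_{\bm j}]$ as a combination of strictly shorter brackets $b_{\bm k}$ and elements of $\mfh^\theta$. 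The crucial point is stability under $\ad(b_i)$: the bracket $\ad(b_i)(b_{\bm k}) = b_{(i,\bm k)}$ need not have $(i,\bm k) \in \mc{J}$, but relation (ii) is exactly the rewriting rule expressing the boundary brackets $\ad(b_i)^{M_{ij}}(b_j)$ as combinations of $b_{\bm l}$ of strictly smaller weight modulo $\mfn^+_X \oplus \mfh^\theta$. Combined with the linear dependencies encoded in the choice of $\mc{J}$ via the basis $\{f_{\bm i}\}_{\bm i \in \mc{J}}$ of $\mfn^-$, this is what lets the induction close, and getting this reduction to interact cleanly with the inductive hypothesis is the delicate part.

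Finally, for (iv) $\Rightarrow$ (i) I would argue by contrapositive, exhibiting an explicit element of $(\mfk \cap \mfh) \backslash \mfh^\theta$ from each way (i) can fail. If $\bm\ga \notin \Ga$ there is $i \in I^* \backslash \Idiff$ with $\ga_i \ne \ga_{\tau(i)}$; then $j = \tau(i)$ satisfies $\theta(\al_i) = -\al_{\tau(i)}$ and $a_{i,\tau(i)} = 0$, so \eqref{k:Serre} gives $\ad(b_i)(b_{\tau(i)}) = \ga_{\tau(i)} h_i - \ga_i h_{\tau(i)} \in \mfk \cap \mfh$, which is proportional to $h_i - h_{\tau(i)} \in \mfh^\theta$ only when $\ga_i = \ga_{\tau(i)}$, hence lies outside $\mfh^\theta$. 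If $(X,\tau) \notin \GSat(A)$, then by \eqref{GSat:rephrased} there is a pair $(i,j)$ with $\tau(i)=i$, $a_{ij}=-1$ and $\theta(\al_i)+\al_i+\al_j=0$; here $\ad(b_i)^2(b_j) = -\ga_i(2h_i+h_j) \in \mfk \cap \mfh$, and using $\tau(i)=i$ one computes $\theta(h_i) = -\Ad(w_X)(h_i) = -(h_i+h_j)$ while $\theta(h_j)=h_j$ (as $j \in X$), so $\theta(2h_i+h_j) = -(2h_i+h_j)$ and the element is again not $\theta$-fixed. This closes the cycle; the two genuinely substantial steps are the spanning argument in (ii) $\Rightarrow$ (iii) and the matching of the $\mfh$-valued cases of \eqref{k:Serre} against the defining conditions of $\GSat(A)$ and $\Ga$.
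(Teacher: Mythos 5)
Your proposal follows essentially the same route as the paper's proof: the equivalence of (i) with (ii) and the contrapositive argument out of (iv) both rest on the case analysis of \eqref{k:Serre}; directness of the sum in \eqref{k:tridecomp} comes from the leading-term observation $\pi_{-\al_{\bm j}}(b_{\bm j})=f_{\bm j}$; and the spanning step in (ii) $\Rightarrow$ (iii) is the same induction in which the Serre relations for the $f_i$ are mirrored on the $b_i$ with error terms controlled by \eqref{k:goodSerre}. One slip to correct: the case $\theta(\al_i)+\al_i+\al_j=0$ with $a_{ij}=-3$ is \emph{not} vacuous --- the condition $w_X(\al_i)=\al_i+\al_j$ forces $a_{ji}=-1$, not $a_{ij}=-1$, and the case genuinely occurs in type ${\rm G}_2$ (it is precisely Example \ref{ex:sp4g2}(ii), where $\ad(b_2)^{4}(b_1)=-18\ga_2^2\,e_1$). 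Your implication (i) $\Rightarrow$ (ii) survives nonetheless, because in that case $j\in X$ and hence $-18\ga_i^2\,e_j\in\mfn^+_X$, which is permitted by \eqref{k:goodSerre}.
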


\begin{proof} \mbox{}
\begin{description}
\item[\rm (i) \;$\Longleftrightarrow$ (ii)]
This is a direct consequence of \eqref{k:Serre}.
\item[\rm (ii) $\implies$ (iii)]
Owing to \eqrefs{k:rels2}{k:rels4} it is sufficient to prove \eqref{k:tridecomp} as an identity for vector spaces.
First we prove that $\mfk =  \mfn^+_X + \mfh^\theta + \Sp\{ b_{\bm i} \, | \bm i \in \mc{J} \}$.
From \eqrefs{k:rels1}{k:rels2} it follows that
\eq{ \label{k:sum} 
\mfk = \mfn^+_X + \mfh^\theta + \langle b_j \rangle_{j \in I} = \mfn^+_X + \mfh^\theta + \sum_{\ell \in \Z_{>0}} \sum_{\bm i \in I^\ell} \C b_{\bm i}
}
as vector spaces.
Hence it suffices to prove that for all $\bm j \in \cup_\ell I^\ell$ we have
\eq{ 
\label{eqn:subspacesum} b_{\bm j} \in \mfn^+_X + \mfh^\theta + \Sp\{ b_{\bm i} \,\,|\,\, \bm i \in \mc{J} \}.
}
We will prove this by induction with respect to the height $\ell$.
Since for all $j \in I$ we have $\dim(\mfg_{-\al_j})=1$ and hence $(j) \in \mc{J}$, the case $\ell=1$ is trivial.
Now fix $\ell \in \Z_{>1}$ and assume that \eqref{eqn:subspacesum} holds true for all smaller positive integers.
Fix $\bm j \in I^\ell$ and repeatedly apply the Serre relations \eqref{g:Serre} to obtain that for all $\bm i \in \mc{J}_\ell$ there exist $a_{\bm i} \in \C$ such that $f_{\bm j} = \sum_{\bm i \in \mc{J}_\ell} a_{\bm i} f_{\bm i}$.
Hence, by virtue of (ii) and equations \eqrefs{k:rels1}{k:rels2} it follows that
\eq{
b_{\bm j} - \sum_{\bm i \in \mc{J}_\ell} a_{\bm i} b_{\bm i} \in \mfn^+_X + \mfh^\theta + \Sp\bigg\{ b_{\bm i} \,\bigg\vert\, \bm i \in \bigcup_{m=1}^{\ell-1} I^m  \bigg\}.
}
Using the induction hypothesis for the elements $b_{\bm i}$ in the last summation one obtains \eqref{eqn:subspacesum}.
It remains to show that the sum in \eqref{eqn:subspacesum} is direct.
Let $\bm j \in \mc{J}$.
Then $f_{\bm j}$ is nonzero.
Because of the explicit formula \eqref{bi:def} we have
\eq{ \label{bi:weight}
b_{\bm j} - f_{\bm j} \in  \mfn^+_X + \mfh^\theta + \C \theta(f_{\bm j}) + \Sp\{ b_{\bm i} \,\,|\, \bm i \in \mc{J},\, \al_{\bm i} < \al_{\bm j} \}.
}
Hence $f_{\bm j} = \pi_{-\al_{\bm j}}(b_{\bm j})$ for all $\bm j \in \mc{J}$, where $\pi_\al$ is the projection on $\mfg_\al$ for $\al \in \Phi$, see \eqref{g:rootdecomp}.
Thus the linear independence of $\{ f_{\bm j} \}_{\bm j \in \mc{J}}$ together with \eqref{g:rootdecomp} implies that the sum is direct.

\item[\rm (iii) $\implies$ (iv)]
By definition, $\mfh^\theta \subseteq \mfk \cap \mfh$ so it suffices to show that $\mfk \cap \mfh \subseteq \mfh^\theta$.
Suppose $h \in \mfk \cap \mfh^\theta$.
By $\pi_{-\al_{\bm j}}(b_{\bm j}) = f_{\bm j}$ and the triangular decomposition \eqref{g:rootdecomp}, part (iii) implies $h \in \mfn^+_X \oplus \mfh^\theta \subseteq \mfg^\theta$ so $h \in \mfh^\theta$.

\item[\rm (iv) \,$\implies$ (ii)]
We prove the contrapositive.
If \eqref{k:goodSerre} fails then \eqref{k:Serre} and \eqref{htheta:expl} imply
\eq{
\ga_j h_i - \ga_i h_j \in \mfk \cap (\mfh \backslash \mfh^\theta) \text{ with } \ga_i \ne \ga_j \qq \text{or} \qq 2h_i+h_j \in \mfk \cap (\mfh \backslash \mfh^\theta).
}
In either case \eqref{k:intersection} does not hold.
\qedhere
\end{description}
\end{proof}

Note that if $\mfk = \mfg^{\theta_{\bm \ga}}$ then \eqref{k:intersection} is trivially satisfied, since $\mfh^\theta = \mfh^{\theta_{\bm \ga}}$.
Given $(X,\tau) \in \GSat(A)$, $\bm \ga \in \Ga$ and $\mc{J}$ defined by \eqref{J:def}, from \eqref{htheta:expl} and \eqref{k:tridecomp} we obtain the \emph{standard basis} for $\mfk$:
\eq{
\label{k:basis}
\{ e_{\bm i} \, | \, \bm i \in \mc{J}_X \} \; \cup \; \{ h_i \, | \, i \in X \} \; \cup \; \{ h_i-h_{\tau(i)} \, | \, i \in I^*, \, i \ne \tau(i)\} \; \cup \; \{ b_{\bm i} \, | \, \bm i \in \mc{J} \}.
}
We denote $\Phi_X = \Phi \cap Q_X$ where $Q_X = \sum_{i \in X} \Z \al_i$.
Combining \eqref{k:basis} with $|\mc{J}| = | \Phi |/2$ and $\dim(\mfh^\theta) = |I|-|I^*|$, itself a consequence of \eqref{htheta:expl}, it follows that
\eq{ \label{k:dimension}
\dim(\mfk) = |\Phi_X|/2 + |I| - |I^*| + |\Phi|/2.
}

J.~Stokman showed in \cite{St19} that \emph{generalized Onsager algebra}, i.e.~the Lie algebra with generators $\wt b_i$ ($i \in I$) and relations
\eq{
\ad(\wt b_i)^{M_{ij}}(\wt b_j) = \sum_{r=1}^{\lfloor M_{ij}/2 \rfloor} p_{ij}^{(r,M_{ij})} \ad(\wt b_i)^{M_{ij}-2r}(\wt b_j)
}
is isomorphic to $\mfk_{(1,1\ldots,1)}(\emptyset,\id) = \mfg^{\om}$ via $\wt b_i \mapsto b_i = f_i + \om(f_i) = f_i - e_i$.
Without loss of generality we can set $\bm \ga = (1,1,\ldots,1)$ since $\mfk_{\bm \ga} = \Ad(\chi{(\ga_1^{1/2},\ga_2^{1/2},\ldots,\ga_{|I|}^{1/2})})(\mfk_{(1,1,\ldots,1)})$ for all $\bm \ga \in (\C^\times)^{I \backslash X}$.
We now discuss a generalization of this to arbitrary $(X,\tau) \in \GSat(A)$.

\begin{conj} \label{conj2}
Let $(X,\tau) \in \GSat(A)$ and $\bm \ga \in \Ga$.
The Lie algebra $\wt \mfk$ generated by symbols $\wt h_i, \wt e_i$ ($i \in X$), $\wt{h_i-h_{\tau(i)}}$ ($i \in I^*, i \ne \tau(i)$), $\wt b_i$ ($i \in I$) and the relations obtained from \eqrefs{k:rels1}{k:Serre} by adding tildes appropriately is isomorphic to $\mfk$.
\end{conj}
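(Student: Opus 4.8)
The plan is to realize the conjectured isomorphism in two steps: construct an obvious surjection and then match dimensions by passing to an associated graded Lie algebra in which the awkward relations \eqref{k:Serre} degenerate to honest Serre relations. Since $\mfk$ is generated by $\mfn^+_X$, $\mfh^\theta$ and the $b_i$ ($i\in I$) subject to \eqref{k:rels1}--\eqref{k:Serre}, the assignment $\wt e_i\mapsto e_i$, $\wt h_i\mapsto h_i$, $\wt{h_i-h_{\tau(i)}}\mapsto h_i-h_{\tau(i)}$, $\wt b_i\mapsto b_i$ extends to a surjective homomorphism $\pi\colon\wt\mfk\twoheadrightarrow\mfk$. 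As $\mfk$ is finite-dimensional with $\dim\mfk$ given by \eqref{k:dimension}, it will suffice to prove $\dim\wt\mfk\le\dim\mfk$; then $\pi$ is forced to be an isomorphism.

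To obtain that bound I would grade the free Lie algebra on the generators by $\deg\wt e_i=+1$, $\deg\wt b_i=-1$ and $\deg(\text{Cartan generators})=0$, and equip $\wt\mfk$ with the induced filtration. Filtering $\mfg$ by height (so that the symbol of an element is its lowest-height component), the map $\pi$ becomes filtered: the element $b_i=f_i+\ga_i\theta(f_i)$ has lowest-height part $f_i$ in height $-1$, while $\theta(f_i)\in\mfg_{-\theta(\al_i)}$ sits in positive height because $-\theta(\al_i)=w_X(\al_{\tau(i)})\in\Phi^+$. By \eqref{bi:weight} the same holds for every iterated bracket, so $\operatorname{gr}(b_{\bm i})=f_{\bm i}$ for $\bm i\in\mc{J}$; together with \eqref{k:tridecomp} this identifies $\operatorname{gr}\mfk=\mfn^-\oplus\mfh^\theta\oplus\mfn^+_X$ inside $\operatorname{gr}\mfg=\mfg$, whence $\dim\operatorname{gr}\mfk=\dim\mfk$.

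The decisive step will be the behaviour of the relations \eqref{k:Serre} in the associated graded, and this is where I expect the main difficulty to lie. The left-hand side $\ad(\wt b_i)^{M_{ij}}(\wt b_j)$ is homogeneous of degree $-M_{ij}-1$. By Theorem \ref{mainthrm}, the hypotheses $(X,\tau)\in\GSat(A)$ and $\bm\ga\in\Ga$ guarantee \eqref{k:goodSerre}, so its right-hand side lies in $\mfn^+_X\oplus\mfh^\theta\oplus\bigoplus_{\al_{\bm k}<\la_{ij}}\C b_{\bm k}$. Every summand there has degree strictly greater than $-M_{ij}-1$: the $\mfn^+_X$- and $\mfh^\theta$-parts have nonnegative degree, and $\al_{\bm k}<\la_{ij}$ forces $\mathrm{ht}(\al_{\bm k})\le M_{ij}$, hence $\deg\wt b_{\bm k}>-M_{ij}-1$. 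Consequently the leading symbol of each relation \eqref{k:Serre} is precisely the Serre relation $\ad(\bar b_i)^{M_{ij}}(\bar b_j)=0$. It follows by the standard comparison $\operatorname{gr}(\mathbb{L}/J)\twoheadleftarrow\mathbb{L}/\langle\text{leading symbols}\rangle$ that $\operatorname{gr}\wt\mfk$ is a quotient of the Lie algebra $\mathfrak{a}$ presented by the leading symbols of all of \eqref{k:rels1}--\eqref{k:Serre}: the full Serre relations among the $\bar b_i$ ($i\in I$) and among the $\bar e_i$ ($i\in X$), together with the clean relations $[\bar e_i,\bar b_j]=\del_{ij}\bar h_i$ and the diagonal Cartan action.

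It then remains to bound $\dim\mathfrak{a}$. The mixed relation $[\bar e_i,\bar b_j]=\del_{ij}\bar h_i$ and the Jacobi identity (exactly as in the derivation of \eqref{k:sum}) show that $\mathfrak{a}$ is spanned by the three subalgebras generated by the $\bar e_i$, by the Cartan elements, and by the $\bar b_i$. By Serre's theorem in finite type, the subalgebra generated by the $\bar b_i$ subject to the Serre relations is a quotient of $\mfn^-$, so has dimension at most $|\Phi|/2$; similarly the $\bar e_i$-subalgebra has dimension at most $|\Phi_X|/2$, and the Cartan part has dimension at most $\dim\mfh^\theta=|I|-|I^*|$ by \eqref{htheta:expl}. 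Summing and comparing with \eqref{k:dimension} yields $\dim\wt\mfk=\dim\operatorname{gr}\wt\mfk\le\dim\mathfrak{a}\le\dim\mfk$, completing the argument. The load-bearing point is the leading-symbol computation of the previous paragraph: it is exactly here that the generalized Satake condition and the constraint $\bm\ga\in\Ga$ are indispensable, since without \eqref{k:goodSerre} a correction term in \eqref{k:Serre} could occur already in degree $-M_{ij}-1$ and destroy the degeneration to the Serre relations. A secondary technical issue, relevant only for the eventual Kac--Moody extension, is that Serre's presentation of $\mfn^\pm$ would there have to be replaced by the Gabber--Kac theorem.
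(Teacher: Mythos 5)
The first thing to say is that the paper does \emph{not} prove this statement: it is deliberately left as Conjecture \ref{conj2}, and the authors explain why. Several right-hand sides of \eqref{k:Serre} (e.g.\ $(1+\zeta(\al_i))\,\ga_i\,[\theta(f_i),[f_i,f_j]]$ and $(\ga_i+\zeta(\al_i)\ga_j)\,[\theta(f_i),f_j]$) are only identified as elements of $\mfn^+_X$ by means of $\theta(f_i)$; they are not given as Lie polynomials in the generators $e_k$ ($k\in X$). Consequently the presentation of $\wt\mfk$ ``obtained by adding tildes'' is not actually pinned down in general, and this is precisely the obstruction the authors name. Your proof silently assumes that such expressions have been chosen, so as written it proves a statement that has not yet been made precise. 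To your credit, your argument is insensitive to the choice: all you use about the $\mfn^+_X$- and $\mfh^\theta$-valued correction terms is their degree, so once \emph{any} correct Lie-polynomial expression is fixed (one exists since $\mfn^+_X$ is generated by the $e_k$, but it is neither canonical nor explicit in general), the leading-symbol computation goes through. You should make that reduction explicit rather than leave it implicit; it is the genuine content missing from the paper's discussion.

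Modulo this, your route is essentially a more carefully packaged version of the strategy the authors sketch immediately after the conjecture (the obvious surjection $\wt\mfk\twoheadrightarrow\mfk$, plus the observation that any extra relation would project, via $\pi_{-\al}$, to a forbidden relation among the $f_i$): both arguments reduce to the fact that the lowest-height components of the relations \eqref{k:Serre} are exactly the Serre relations, and that $\mfn^-$ is presented by those. Your identification of where $(X,\tau)\in\GSat(A)$ and $\bm\ga\in\Ga$ enter --- through Theorem \ref{mainthrm}(ii), to exclude a correction term already in degree $-M_{ij}-1$ --- is correct and is indeed the load-bearing point. Two details still need attention. First, the step $\dim\wt\mfk=\dim\operatorname{gr}\wt\mfk$ requires the induced filtration on $\wt\mfk$ to be separated, which is not automatic for a quotient of a $\Z$-graded free Lie algebra with generators in both positive and negative degrees; either verify $\bigcap_d F_d=\{0\}$, or bypass the associated graded altogether by running the induction-on-height spanning argument from the proof of Theorem \ref{mainthrm} (ii)$\Rightarrow$(iii) verbatim in $\wt\mfk$, which shows directly that $\wt\mfk$ is spanned by the $\wt e_{\bm i}$ ($\bm i\in\mc J_X$), the Cartan generators and the $\wt b_{\bm i}$ ($\bm i\in\mc J$), hence $\dim\wt\mfk\le\dim\mfk$. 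Second, in the bound for your auxiliary algebra $\mathfrak a$ the subalgebra generated by the $\bar b_i$ is a quotient of the ``free half'' on generators subject only to Serre relations, and the identification of that half with $\mfn^-$ is exactly where finite type (Serre's theorem, or Gabber--Kac beyond finite type) is used, as you note at the end.
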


The only obstacle to promoting this to a theorem, and thereby settling the question posed in \cite[Rmk.~2.10]{Ko14}, is the lack of a general explicit formula for the right-hand sides in \eqref{k:Serre} in terms of the $e_k$ with $k \in X$, instead of $\theta(f_i)$;
for individual choices of $(X,\tau) \in \GSat(A)$ these explicit expressions can be found, or such relations do not occur at all, and in those cases one could prove the statement in the conjecture as follows.
Because the generators of the Lie subalgebra $\mfk$ satisfy \eqrefs{k:rels1}{k:Serre}, one has a surjective Lie algebra homomorphism $\phi: \wt \mfk \to \mfk$ defined on generators by removing the tilde.
On the other hand, in $\mfk$ there are no relations involving the $b_i$ other than \eqrefs{k:rels1}{k:Serre}, as otherwise applying the appropriate projection $\pi_{-\al}$ onto $\mfg_{-\al}$ with $\al \in \Phi^+$ maximal would yield a relation involving the $f_i$ other than those given in \eqrefs{g:rel1}{g:Serre}.
From this one can deduce that $\phi$ is injective and obtain the statement in Conjecture \ref{conj2}.

%%%%%%%%%%%%%%%%%%%%%%%%%%%%%%%%%%%%%%%%%%%%%%%%%%%%%%%%%%%%%%%%%%%%%%%%%%%%%%

\subsection{Semidirect product decompositions of $\mfk$}

In this section we assume that $A$ is indecomposable, so that $\mfg$ is simple.
In order to describe the derived subalgebra of $\mfk$ recall the set $\Idiff \in I^*$ and define
\eqa{ \label{specialorbits}
I_{\rm ns} &= \{ i \in I \, | \, (\theta(\al_i))(h_i) = -2 \} = \{ i \in I \, | \, i = \tau(i), \, \check X(i) = \emptyset \} \subseteq I^* , \\
\Insf &= \{ j \in I_{\rm ns} \, | \, a_{ij} \in 2\Z \text{ for all } i \in I_{\rm ns} \}. 
}

\begin{prop} \label{prop:kprime}
Let $(X,\tau) \in \GSat(A)$ and $\bm \ga \in \Ga$.
The set
\[
\{ e_{\bm i} \, | \, \bm i \in \mc{J}_X \} \; \cup \; 
\{ h_i \, | \, i \in X \} \; \cup \; 
\{ h_i-h_{\tau(i)} \, | \, i \in I^* \backslash \Idiff, \, i \ne \tau(i)\} \; \cup \; 
\{ b_{\bm i} \, | \, \bm i \in \mc{J}, \, \bm i \ne (j) \text{ with } j \in \Insf \}.
\]
is a basis for the derived subalgebra $\mfk'$ and we have
\eq{ \label{kprime}
\mfk = \mfk' \rtimes \Big( \bigoplus_{i \in \Idiff} \C (h_i - h_{\tau(i)}) \oplus \bigoplus_{j \in \Insf} \C b_j \Big).
}
\end{prop}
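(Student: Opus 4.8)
The plan is to show that the span $V$ of the proposed basis coincides with $\mfk'$ and that $\mathfrak{a}:=\bigoplus_{i\in\Idiff}\C(h_i-h_{\tau(i)})\oplus\bigoplus_{j\in\Insf}\C b_j$ is a complementary abelian subalgebra. The first observation is that the standard basis \eqref{k:basis} of $\mfk$ is the disjoint union of the proposed basis of $\mfk'$ and the basis of $\mathfrak{a}$ (note $\Idiff\subseteq\{i\in I^*:i\ne\tau(i)\}$ and $(j)\in\mc{J}$ for each $j\in\Insf$), so that $\mfk=V\oplus\mathfrak{a}$ as vector spaces and the listed set is automatically linearly independent. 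To organise the argument I would use the $\mfh^\theta$-weight grading $\mfk=\bigoplus_\mu\mfk[\mu]$ from \eqref{k:rels2}: every nonzero-weight vector $x\in\mfk[\mu]$ ($\mu\ne0$) satisfies $x=\mu(h)^{-1}[h,x]\in\mfk'$ for a suitable $h\in\mfh^\theta$, so $\bigoplus_{\mu\ne0}\mfk[\mu]\subseteq\mfk'$ and $\mfk/\mfk'\cong\mfk[0]/\mfk'[0]$. Since $\theta$ fixes $Q_X$ pointwise, \eqref{k:tridecomp} gives $\mfk[0]=\mfh^\theta\oplus\bigoplus_{\bm i\in B_0}\C b_{\bm i}$ with $B_0:=\{\bm i\in\mc{J}:\theta(\al_{\bm i})=-\al_{\bm i}\}$, and $\mathfrak{a}\subseteq\mfk[0]$.

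Next I would verify $V\subseteq\mfk'$ generator by generator. For $i\in X$ one has $e_i=\tfrac12[h_i,e_i]$ and $h_i=[e_i,b_i]$, while all $e_{\bm i}$ and all $b_{\bm i}$ with $|\bm i|\ge2$ are brackets by construction. For $i\in I^*\backslash\Idiff$ with $i\ne\tau(i)$ the node is not adjacent to $X$ and $a_{i\tau(i)}=0$, hence $\theta(\al_i)+\al_{\tau(i)}=0$; the matching case of \eqref{k:Serre} together with $\ga_i=\ga_{\tau(i)}$ (valid since $\bm\ga\in\Ga$) gives $h_i-h_{\tau(i)}=\ga_i^{-1}[b_i,b_{\tau(i)}]$. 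For a single $b_i$ with $i\notin\Insf$ there are two cases: if $\al_i|_{\mfh^\theta}\ne0$ (equivalently $i\notin\Ins$) then $b_i=-\al_i(h)^{-1}[h,b_i]$ for any $h\in\mfh^\theta$ with $\al_i(h)\ne0$; and if $i\in\Ins\backslash\Insf$ then by \eqref{specialorbits} there is $k\in\Ins$ with $a_{ki}$ odd, so $\theta(\al_k)+\al_k=0$ and $M_{ki}$ is even, and the penultimate case of \eqref{k:Serre} expresses the degree-zero term $p_{ki}^{(M_{ki}/2,\,M_{ki})}\ga_k^{M_{ki}/2}b_i$, nonzero by \eqref{pij:negative}, as a sum of brackets, whence $b_i\in\mfk'$.

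Then I would check that $\mathfrak{a}$ is an abelian subalgebra. For $j\in\Insf$ we have $\al_j|_{\mfh^\theta}=0$, so $[h_i-h_{\tau(i)},b_j]=-\al_j(h_i-h_{\tau(i)})b_j=0$ by \eqref{k:rels2}; and for distinct $j,j'\in\Insf$ both $a_{jj'}$ and $a_{j'j}$ are even by definition of $\Insf$, which in finite type forces $a_{jj'}=a_{j'j}=0$, so $[b_j,b_{j'}]=0$ by \eqref{k:Serre}. With $\mfh^\theta$ abelian this yields $[\mathfrak{a},\mathfrak{a}]=0$. Granting $\mfk'=V$, the decomposition \eqref{kprime} is then immediate: $\mfk'$ is an ideal and $\mathfrak{a}$ a complementary abelian subalgebra.

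The crux, which I expect to be the main obstacle, is the reverse inclusion $\mfk'\subseteq V$, equivalently $\mfk'[0]=V[0]$, equivalently $\mfk'\cap\mathfrak{a}=0$. Since $\mfk/\mfk'\cong\mfk[0]/\mfk'[0]$ is a quotient of $\mfk[0]/V[0]\cong\mathfrak{a}$, it suffices to prove that the two coordinate functionals on $\mfk[0]$ — the coefficient of $b_{j_0}$ for $j_0\in\Insf$, and the coefficient of $h_{i_0}-h_{\tau(i_0)}$ for $i_0\in\Idiff$ (read off using \eqref{htheta:expl} and the decomposition of $\mfk[0]$) — vanish on every weight-zero bracket. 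For the first, $b_{j_0}$ can be produced only by a degree-zero term in the penultimate case of \eqref{k:Serre}, which requires some $k\in\Ins$ with $a_{kj_0}$ odd; this is excluded for $j_0\in\Insf$. For the second, the only purely $\mfh^\theta$-valued outputs in \eqref{k:Serre} are $\ga_{\tau(i)}h_i-\ga_i h_{\tau(i)}$ (occurring exactly when $\theta(\al_i)+\al_{\tau(i)}=0$, i.e.\ for $i\in I^*\backslash\Idiff$, and lying in $V[0]$) and $-\ga_i(2h_i+h_j)$; crucially, for $i_0\in\Idiff$ the adjacency $a_{i_0\tau(i_0)}<0$ or to $X$ forces $[b_{i_0},b_{\tau(i_0)}]$ into a longer basis vector $b_{\bm i}\in B_0$ rather than into a pure multiple of $h_{i_0}-h_{\tau(i_0)}$, as the example $\mfg=\mfsl_3$, $X=\emptyset$, $\tau=(1\,2)$ already illustrates. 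The genuine work is then to rule out any $h_{i_0}-h_{\tau(i_0)}$-contribution from a general weight-zero bracket $[\mfk[\mu],\mfk[-\mu]]$, whose $\mfh$-part lies in $\sum_\al\C[\mfg_\al,\mfg_{-\al}]$, by a case analysis driven by \eqref{k:Serre} and the characterization of $\Idiff$ in \eqref{specialorbits}; in the Satake case, where $\mfk=\mfg^{\theta_{\bm\ga}}$ is reductive, these functionals are merely the projections onto the centre and the claim is classical. Once both functionals are shown to be characters one obtains $\dim(\mfk/\mfk')=|\Idiff|+|\Insf|=\dim\mathfrak{a}$, forcing $\mfk'=V$ and $\mfk=\mfk'\rtimes\mathfrak{a}$.
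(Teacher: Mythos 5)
Your verification that the displayed set is contained in $\mfk'$ is correct and coincides with the paper's own argument almost case for case: $e_i,h_i$ ($i\in X$) and the higher $e_{\bm i}, b_{\bm i}$ are handled the same way; $h_i-h_{\tau(i)}$ for $i\in I^*\backslash\Idiff$, $i\ne\tau(i)$ comes from the case $\theta(\al_i)+\al_{\tau(i)}=0$, $a_{i\tau(i)}=0$ of \eqref{k:Serre} together with $\ga_i=\ga_{\tau(i)}$; and a single $b_j$ with $j\notin\Insf$ is recovered either from the $\mfh^\theta$-action or, for $j\in\Ins\backslash\Insf$, from the penultimate case of \eqref{k:Serre} using the nonvanishing \eqref{pij:negative} (you take $m=M_{kj}$ even where the paper takes $m=2M_{ij}$; both work). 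Your check that $\mathfrak{a}=\bigoplus_{i\in\Idiff}\C(h_i-h_{\tau(i)})\oplus\bigoplus_{j\in\Insf}\C b_j$ is an abelian subalgebra is also correct and is slightly more than the paper records explicitly.

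The problem is the step you yourself label ``the genuine work'': the reverse inclusion, equivalently $\mfk'\cap\mathfrak{a}=0$, is never actually proved. You reduce it to showing that two coordinate functionals on $\mfk[0]$ vanish on all weight-zero brackets, dispose of the easy $b_{j_0}$-coefficient, and then defer the $h_{i_0}-h_{\tau(i_0)}$-coefficient ($i_0\in\Idiff$) to an unexecuted ``case analysis driven by \eqref{k:Serre}''. Without that analysis the dimension count $\dim(\mfk/\mfk')=|\Idiff|+|\Insf|$ is unavailable and the proposition is only half proved. The paper closes this direction by a single observation: in the complete list of straightening relations \eqrefs{k:rels1}{k:Serre} the elements $h_i-h_{\tau(i)}$ with $i\in\Idiff$ and $b_j$ with $j\in\Insf$ never occur on the right-hand side (for the $\mfh^\theta$-valued outputs this uses that the generalized Satake condition excludes the case producing $2h_i+h_j$, and that the output $\ga_{\tau(i)}h_i-\ga_i h_{\tau(i)}$ arises only for $i\in I^*\backslash\Idiff$); combined with the basis \eqref{k:tridecomp}, which governs how an arbitrary bracket of standard basis elements re-expands, these vectors therefore cannot appear in any linear combination of brackets. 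Supplying that observation (which is exactly the case analysis you postponed) would complete your argument; as written, the proposal establishes only the inclusion $V\subseteq\mfk'$ and the complementarity at the level of vector spaces, not the equality $V=\mfk'$.
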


\begin{proof}
Fix $(X,\tau) \in \GSat(A)$.
Note that in \eqrefs{k:rels1}{k:Serre} neither $h_i-h_{\tau(i)}$ ($i \in \Idiff$) nor $b_j$ ($j \in \Insf$) appears in the right-hand side.
From the decomposition \eqref{k:tridecomp} it follows that these elements are not linear combinations of Lie brackets in $\mfk$.
It suffices to show that the remaining basis elements specified in \eqref{k:basis} are linear combinations of Lie brackets in $\mfk$.
\begin{itemize}
\item For $b_{\bm i}$ with $\bm i \in \mc{J}_\ell$ and $e_{\bm i}$ with $\bm i \in \mc{J}_{X,\ell}$ with $\ell>1$, this holds by definition.
\item For $e_i$, $f_i$, $h_i$ with $i \in X$, this follows from \eqrefs{k:rels1}{k:rels3}.
\item For $h_i-h_{\tau(i)}$ with $i \in I^* \backslash \Idiff$ and $i \ne \tau(i)$, the constraint on $i$ is equivalent to $w_X(\al_i)=\al_i$ and $a_{i \, \tau(i)}=0$.
Hence \eqref{k:Serre} implies that $h_i-h_{\tau(i)} = \ga_i^{-1} [b_i,b_{\tau(i)}]$.
\item For $b_j$ with $X(j) \ne \emptyset$ there exists $i \in X$ such that $a_{ij} \ne 0$.
By \eqref{k:rels2} we have $b_j = -a_{ij}^{-1}[h_i,b_j]$.
\item For $b_j$ with $j \ne \tau(j)$, by \eqref{k:rels2} we have $b_j =  (a_{\tau(j) \, j}-2)^{-1} [h_j-h_{\tau(j)},b_j]$.
\item For $b_j$ with $j \in I_{\rm ns} \backslash \Insf$ there exists $i \in I_{\rm ns}$ such that $a_{ij}$ is odd.
From \eqref{k:Serre} we deduce
\eq{
p_{ij}^{(M_{ij},2M_{ij})} b_j = \ga_i^{-M_{ij}} \ad(b_i)^{2M_{ij}}(b_j) - \sum_{r=1}^{M_{ij}-1} p_{ij}^{(r,2M_{ij})} \ga_i^{r-M_{ij}} \ad(b_i)^{2(M_{ij}-r)}(b_j).
}
By \eqref{pij:negative} $p_{ij}^{(M_{ij},2M_{ij})}$ is nonzero.
\hfill \qedhere
\end{itemize}
\end{proof}

From Proposition \ref{prop:kprime} it follows that the codimension of $\mfk'$ in $\mfk$ equals $|\Idiff| + |\Insf|$.
For $(X,\tau) \in \Sat(A)$, in \cite[Sec.~7, Variation 1]{Le02} it was noted that $|\Idiff| \le 1$ if $A$ is of finite type.
In light of the above it is natural to generalize this by involving the set $\Insf$ and allowing $(X,\tau) \in \GSat(A)$.
Namely we have $|\Idiff| + |\Insf| \le 1$ for all $(X,\tau) \in \GSat(A)$ and the upper bound is sharp unless $A$ is of type ${\rm E}_8$, ${\rm F}_4$ or ${\rm G}_2$.
This extends the known result for involutive $\theta$ that $\mfg^\theta$ is reductive with abelian summand at most 1-dimensional, see \cite{Ar62}.

\begin{defn} 
The set of \emph{weak Satake diagrams} is
\eq{
\WSat(A) = \GSat(A) \backslash (\Sat(A) \cup \{ \nonweak \}).
}
\end{defn}

For $(X,\tau) \in \WSat(A)$ we will obtain a semidirect product decomposition in terms of a reductive Lie subalgebra and a nilpotent ideal.
For any $r \in \Z_{\ge 0}$ and any $i \in I$ denote by $\mfk(i)_r$ the span of all $b_{\bm j}$ with $\bm j \in \mc J$ such that the coefficient of $\al_i$ in $\al_{\bm j}$ is at least $r$.
Set $\mfk(i) := \mfk(i)_1 \subseteq \mfk$ and recall the $\bm \ga$-modified automorphism $\theta_{\bm \ga}$ defined in \eqref{theta_gamma}.

\begin{prop} \label{prop:k:weak:ideal}
Let $(X,\tau) \in \WSat(A)$, $\bm \ga \in \Ga$ and $i$ the unique element of $I \backslash X$ such that $i = \tau(i)$ and $\al_i(\rho^\vee_X) \notin \Z$.
Then the following statements hold.
\begin{enumerate}
\item The subalgebra $\mfg_{I \backslash \{ i \}}$ is $\theta_{\bm \ga}$-stable, $\theta_{\bm \ga}|_{\mfg_{I \backslash \{ i \}}}$ is an involution and $\mfk_{\hat \imath} := \mfk \cap \mfg_{I \backslash \{ i \}}$ is its fixed-point subalgebra in $\mfg_{I \backslash \{ i \}}$;
\item We have $\ad(b_i)(\mfk(i)_r) \subseteq \mfk(i)_{r+1}$ for all $r \in \Z_{\ge 0}$ and the subspaces $\mfk(i)_r$ are $\ad(\mfk_{\hat \imath})$-modules;
\item The subspace $\mfk(i)$ is an ideal of $\mfk$, $\mfk = \mfk(i) \rtimes \mfk_{\hat \imath}$ and we have the lower central series
\[ 
\mfk(i) = \mfk(i)_1 \supset \mfk(i)_2 \supset \mfk(i)_3 = \mfk(i)_4 = \ldots = \{ 0 \};
\]
\item The subalgebra $\mfk_{\bm \ga}$ is isomorphic to the subalgebra of $\mfg$ generated by $\mfg_X$, $\mfh^\theta$, $b_{j;\gamma_j}$ for $j \in I \backslash (X \cup \{ i \})$ and $b_{i;0} = f_i$.
\end{enumerate}
\end{prop}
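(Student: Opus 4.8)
The plan is to exhibit $\mfk_{\bm\ga}$ and the target algebra (which I will denote $\mfk^\flat$, generated by $\mfg_X$, $\mfh^\theta$, the $b_{j;\ga_j}$ with $j\in I\backslash(X\cup\{i\})$ and $b_{i;0}=f_i$) as a filtered Lie algebra and its associated graded, and then to show the filtered algebra is isomorphic to its graded. Grade $\mfg=\bigoplus_{d\in\Z}\mfg^{(d)}$ by the coefficient of $\al_i$, so that $\mfg^{(0)}=\mfg_{I\backslash\{i\}}$. Since $\tau(i)=i$, each $b_{j;\ga_j}$ with $j\ne i$ lies in $\mfg^{(0)}$, as do $\mfg_X$ and $\mfh^\theta$, whereas $b_i=f_i+\ga_i\theta(f_i)$ decomposes as $f_i\in\mfg^{(-1)}$ plus $\ga_i\theta(f_i)\in\mfg^{(+1)}$. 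I would then use the decreasing multiplicative filtration $\mfk_{\ge d}:=\mfk_{\bm\ga}\cap\bigoplus_{d'\ge d}\mfg^{(d')}$. The key computation is that the lowest-$\al_i$-degree component of a standard basis vector $b_{\bm j}$ of Theorem \ref{mainthrm} is obtained by taking $f_i$ in each $i$-slot and the full $b_{j_s}$ otherwise, i.e. it equals the bracket monomial $b^\flat_{\bm j}$ computed with $b_{i;0}=f_i$; this leading part is nonzero because its projection to $\mfg_{-\al_{\bm j}}$ is $f_{\bm j}\ne0$. As the standard basis is adapted to the filtration, $\mathrm{gr}\,\mfk_{\bm\ga}$ is spanned, as a graded subalgebra of $\mfg$, by the $b^\flat_{\bm j}$ together with the degree-$0$ vectors $e_{\bm i}$, $h_i$ and $h_i-h_{\tau(i)}$, and one checks this is exactly the standard basis of $\mfk^\flat$. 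Hence $\mfk^\flat=\mathrm{gr}\,\mfk_{\bm\ga}$.

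To upgrade this to an isomorphism $\mfk_{\bm\ga}\cong\mfk^\flat$ I would exploit the semidirect decomposition from the preceding parts: by (iii), $\mfk_{\bm\ga}=\mfk(i)\rtimes\mfk_{\hat\imath}$ with $\mfk(i)$ nilpotent of class at most $2$, $\mfk(i)_2$ central in $\mfk(i)$ and $\mfk(i)_3=\{0\}$. Crucially $\mfk_{\hat\imath}$ is literally the \emph{same} subalgebra for $\mfk_{\bm\ga}$ and $\mfk^\flat$, being generated by the common degree-$0$ generators; and by (i) it equals $\mfg_{I\backslash\{i\}}^{\theta_{\bm\ga}}$, hence is reductive in $\mfg_{I\backslash\{i\}}$. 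Therefore $\mfg$ restricts to a semisimple $\mfk_{\hat\imath}$-module, so every subquotient appearing below is semisimple and the sequence $0\to\mfk(i)_2\to\mfk(i)\to M\to0$ of $\mfk_{\hat\imath}$-modules, with $M:=\mfk(i)/\mfk(i)_2$, admits a $\mfk_{\hat\imath}$-equivariant splitting $s\colon M\to\mfk(i)$. Writing $N:=\mfk(i)_2$, this presents $\mfk(i)$ as the two-step nilpotent algebra determined by the equivariant data $(M,N,\beta)$ with $\beta\colon\Lambda^2M\to N$, and $\mfk_{\bm\ga}=(M\oplus N)\rtimes\mfk_{\hat\imath}$.

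Finally I would match this data with that of $\mfk^\flat$. The projections $\pi_{-1},\pi_{-2}$ onto $\mfg^{(-1)},\mfg^{(-2)}$ are $\mfk_{\hat\imath}$-equivariant (as $\mfk_{\hat\imath}\subseteq\mfg^{(0)}$), and by the leading-term identity of the first paragraph they induce $\mfk_{\hat\imath}$-isomorphisms $M\xrightarrow{\sim}\mfk^\flat{}^{(-1)}$ and $N\xrightarrow{\sim}\mfk^\flat{}^{(-2)}$; moreover $\pi_{-2}\big(\beta(m_1,m_2)\big)=[\pi_{-1}s(m_1),\pi_{-1}s(m_2)]$, since the degree-$(-2)$ part of a bracket of two elements with leading degree $-1$ is the bracket of their leading parts. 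Thus the defining data of $\mfk(i)$ and $\mfk^\flat(i)$ correspond, the equivariant splittings leave no residual cocycle, and one obtains a $\mfk_{\hat\imath}$-equivariant Lie isomorphism $\mfk(i)\cong\mfk^\flat(i)$, hence $\mfk_{\bm\ga}\cong\mfk^\flat$, sending $\mfg_X,\mfh^\theta,b_{j;\ga_j}$ ($j\ne i$) to themselves and $b_{i;\ga_i}$ to $f_i$. The main obstacle is exactly this last step: a filtered Lie algebra is \emph{not} in general isomorphic to its associated graded, so the whole point is to rule out a genuine degeneration. What makes it work is the conjunction of the class-$2$ nilpotency from (iii) (bounding the filtration length, so only $M$ and $N$ occur) and the reductivity of $\mfk_{\hat\imath}$ from (i) (forcing the equivariant splitting); I expect verifying the bracket-compatibility and the absence of a cocycle to be the most delicate part of the write-up.
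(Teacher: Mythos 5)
Your proposal only engages with part (iv); parts (i)--(iii) are quoted as inputs (``by (iii)'', ``by (i)'', ``the semidirect decomposition from the preceding parts'') but are never proved, and your argument for (iv) depends on them essentially. Concretely: the reductivity of $\mfk_{\hat \imath}$, which you need for the $\ad(\mfk_{\hat \imath})$-equivariant splitting, comes from the identification $\mfk_{\hat \imath} = \mfg_{I \backslash \{i\}}^{\theta_{\bm \ga}}$ in (i), and proving that identification is not automatic --- the paper has to check that $\theta$ preserves the $\al_i$-coefficient of every root of $\mfg_{I\backslash\{i\}}$ and then run a dimension count against \eqref{k:dimension} to see that $\mfk_{\hat\imath}$ exhausts the fixed points. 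Likewise the class-$2$ nilpotency of $\mfk(i)$, i.e.\ $\mfk(i)_3=\{0\}$, which bounds your filtration to the two layers $M$ and $N$, requires the case-by-case verification (via Table \ref{tab:nonSatake}) that the coefficient of $\al_i$ in the highest root of $\Phi$ is always $2$; nothing in your write-up supplies this. Since (iv) is derived from (i) and (iii) and neither is established, the proof as written is incomplete, and the two omitted facts are precisely the ones that need real work.

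For part (iv) itself your filtered-versus-associated-graded argument looks sound in outline (the leading-term computation, the equivariant splitting, and the matching of the bracket data $\beta\colon\Lambda^2 M\to N$ all go through once (i)--(iii) are granted, and there is genuinely no residual cocycle because $N$ is central and absorbs all brackets), but it is much heavier than necessary. The paper's proof of (iv) is a one-line observation: because $\al_i(\rho^\vee_X)\notin\Z$ one has $\ad(b_i)^{M_{ij}}(b_j)=0$ for all $j\ne i$ by \eqref{k:Serre}, so none of the relations \eqrefs{k:rels1}{k:Serre} involving $b_{i;\ga_i}$ depends on $\ga_i$, and the derivation of \eqref{k:Serre} never used $\ga_i\ne 0$; hence replacing $b_{i;\ga_i}$ by $b_{i;0}=f_i$ changes nothing in the presentation and yields an isomorphic subalgebra. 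Your route does have the merit of not leaning on the (conjectural, cf.\ Conjecture \ref{conj2}) presentation of $\mfk$ by generators and relations, but to stand on its own it must first prove (i)--(iii), in particular the dimension count and the highest-root coefficient computation.
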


\begin{proof}
From the decomposition \eqref{k:tridecomp} it follows that $\mfk_{\hat \imath} = \langle \mfn_X^+, \mfh^\theta, \mfk(i)_0 \rangle$ and, as vector spaces, $\mfk = \mfk(i) \oplus \mfk_{\hat \imath}$.
Crucially, by \eqref{k:Serre} we have $\ad(b_i)^{M_{ij}}(b_j) = 0$ for all $j \in I \backslash \{i \}$, since $\al_i(\rho^\vee_X) \notin \Z$.
Now we prove the four statements consecutively.
\begin{enumerate}
\item Since $\theta_{\bm \ga}(\mfg_\al) = \mfg_{\theta(\al)}$ for all $\al \in \Phi$ and applying $\theta = -w_X \tau$ to any root of $\mfg_{I \backslash \{i\}}$ does not modify the coefficient of $\al_i$, it follows that $\mfg_{I \backslash \{i\}}$ is $\theta_{\bm \ga}$-stable.
Note that $\mfk_{\hat \imath}$ is fixed pointwise by $\theta_{\bm \ga}$.
Furthermore a dimension count in each simple summand of $\mfg_{I \backslash \{i\}}$ combined with \eqref{k:dimension} implies that $\mfk_{\hat \imath}$ is the fixed-point subalgebra of $\theta_{\bm \ga}$.

\item The first statement is immediate.
From the adjoint action of $e_j$ ($j \in X$), $h \in \mfh^\theta$ and $b_j$ ($j \in I \backslash \{i \}$) on elements of $\mfk(i)_r$, subject to \eqrefs{k:rels1}{k:Serre}, we obtain that $\ad(\mfk_{\hat \imath})(\mfk(i)_r) \subseteq \mfk(i)_r$.

\item From part (ii) it follows that $[\mfk(i),\mfk] \subset \mfk(i)$ and combining this with $\mfk = \mfk(i) \oplus \mfk_{\hat \imath}$ we obtain the semidirect product decomposition.
A case-by-case analysis using Table \ref{tab:nonSatake} yields that the coefficient in front of $\al_i$ in the highest root of $\Phi$ is always 2.
This implies that the lower central series becomes trivial after 2 steps.

\item This follows from the facts that the relations involving $b_{i;\ga_i}$ in \eqrefs{k:rels1}{k:Serre} do not depend on $\ga_i$ and that the derivation of \eqref{k:Serre} did not require $\ga_i \ne 0$.
\hfill \qedhere
\end{enumerate}
\end{proof}

\begin{exam} \label{ex:sp4g2} 
We discuss two examples of $\mfk(X,\tau)$ with $(X,\tau) \in \GSat(A) \backslash \Sat(A)$.
\begin{enumerate}
\item
The smallest such $\mfk$ occurs when $(X,\tau)=$\begin{tikzpicture}[baseline=-0.35em,line width=0.7pt,scale=75/100]
\draw[double,<-] (4.05,0) --  (4.4,0);
\filldraw[fill=white] (4,0) circle (.1) node[left=.25pt]{\scriptsize $1$};
\filldraw[fill=black] (4.5,0) circle (.1) node[right=.25pt]{\scriptsize $2$};
\end{tikzpicture}.
By definition, $\mfk$ is the subalgebra of $\mfsp_4$ generated by $b_1 = f_1 + \ga_1 \theta(f_1)$ for some $\ga_1\in\C^\times$ and $b_2=f_2$, $e_2$, $h_2$.
The relations \eqrefs{k:rels1}{k:Serre} specialize to
\eq{ \label{ex:sp4:relations}
\begin{gathered}
[e_2,b_1]=0, \qq [e_2,b_2]=h_2, \qq [h_2,b_1]=b_1, \qq [h_2,b_2]=-2b_2, \\
[h_2,e_2]=2e_2, \qq [b_1,[b_1,[b_1,b_2]]]=0, \qq [b_2,[b_2,b_1]]=0.
\end{gathered}
}
According to \eqref{k:basis}, the standard basis of $\mfk$ is given by $\{ e_2, h_2, b_1, b_2, b_{(1,2)}, b_{(1,1,2)} \}$.
Proposition \ref{prop:kprime} implies $\mfk = \mfk'$ and Proposition \ref{prop:k:weak:ideal} yields the nontrivial Levi decomposition $\mfk = \Sp(b_1,b_{(1,2)},b_{(1,1,2)}) \rtimes \Sp(e_2,h_2,b_2)$ with the radical isomorphic to the 3-dimensional Heisenberg Lie algebra and the Levi subalgebra isomorphic to $\mfsl_2$.
In particular it follows from \eqref{ex:sp4:relations} that $b_{(1,1,2)}$ is central.
\item
Proposition \ref{prop:k:weak:ideal} excludes the case $(X,\tau) =$ \begin{tikzpicture}[baseline=-0.35em,line width=0.7pt,scale=75/100]
\draw[double,->] (.9,0) -- (.95,0);
\draw[line width=.5pt] (.5,0) -- (1,0);
\draw[line width=.5pt] (.5,0.05) -- (1,0.05);
\draw[line width=.5pt] (.5,-0.05) -- (1,-0.05);
\filldraw[fill=black] (.5,0) circle (.1) node[above=.25pt]{\scriptsize $1$};
\filldraw[fill=white] (1,0) circle (.1) node[above=.25pt]{\scriptsize $2$};
\end{tikzpicture}
.
We will now see that is the only element of $\GSat(A)\backslash\Sat(A)$ such that $\mfk$ is a reductive Lie algebra.
By definition, $\mfk$ is the subalgebra of $\mfg={\rm Lie}(G_2)$ generated by $e_1,h_1,b_1=f_1$ and $b_2=f_2+\ga_2\,\theta(f_2)$ for some $\ga_2 \in \C^\times$.
The relations \eqrefs{k:rels1}{k:Serre} give
\eq{ \label{ex:g2:relations}
\begin{gathered}
[e_1,b_1]=h_1, \qq  [e_1,b_2]=0, \qq [h_1,b_1] = -2b_1, \qq [h_1,b_2] = b_2, \\ 
[h_1,e_1]=2e_1, \qq [b_1,[b_1,b_2]] =0 , \qq  [b_2,[b_2,[b_2,[b_2,b_1]]]] = -18 \ga_2^2 e_1.
\end{gathered}
}
The standard basis of $\mfk$ is given by $\{ e_1, h_1, b_1, b_2, b_{(2,1)}, b_{(2,2,1)}, b_{(2,2,2,1)}, b_{(1,2,2,2,1)} \}$.
Proposition \ref{prop:kprime} yields $\mfk = \mfk'$.
Moreover,
using \eqref{ex:g2:relations}, the adjoint action of $e_1$, $b_1$ and $b_2$ on $\mfk$ implies that any ideal of $\mfk$ equals $\mfk$ if it contains any of the above standard basis elements.
Then some straightforward computations show that $\mfk$ is in fact a simple Lie algebra and hence isomorphic to $\mfsl_3$.
\hfill \examend
\end{enumerate}
\end{exam}

\begin{prop} \label{prop:notfixed}
Let $(X,\tau) \in \GSat(A) \backslash \Sat(A)$ and $\bm \ga \in \Ga$.
Then $\mfk$ is not the fixed-point subalgebra of any $\phi \in \Aut(\mfg)$ such that~1 is a simple root of the minimal polynomial of $\phi$.
\end{prop}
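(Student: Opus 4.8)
The plan is to assume for contradiction that $\mfk=\mfg^\phi$ for some $\phi\in\Aut(\mfg)$ with $1$ a simple root of its minimal polynomial, and to exploit that such a $\phi$ forces $\mfk$ to be reductive. First I would pass to the semisimple part of $\phi$: writing $\phi=\phi_s\phi_u$ for the commuting semisimple and unipotent parts (both in $\Aut(\mfg)$), the hypothesis that $1$ is a simple root means that the generalized $1$-eigenspace of $\phi$ equals $\ker(\phi-\id)$, so $\phi_u$ acts as the identity there and $\mfg^\phi=\ker(\phi-\id)=\ker(\phi_s-\id)=\mfg^{\phi_s}$. Hence we may assume $\phi$ is semisimple, which equips $\mfg$ with an eigenspace grading $\mfg=\bigoplus_\zeta\mfg_\zeta$ having $\mfk=\mfg_1$. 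Since $\phi$ preserves the Killing form $\kappa$ of $\mfg$, the spaces $\mfg_\zeta$ and $\mfg_{\zeta'}$ are $\kappa$-orthogonal unless $\zeta\zeta'=1$; in particular $\kappa|_{\mfk}$ is nondegenerate, and it is standard that the fixed-point subalgebra of a semisimple automorphism of a semisimple Lie algebra is reductive. For the weak Satake diagrams it will therefore suffice to show that $\mfk$ is not reductive.

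For the generic case $(X,\tau)\in\WSat(A)$ this is immediate from Proposition \ref{prop:k:weak:ideal}. That result writes $\mfk=\mfk(i)\rtimes\mfk_{\hat\imath}$ with $\mfk(i)$ a nonzero nilpotent ideal of class $2$, i.e.\ with lower central series $\mfk(i)=\mfk(i)_1\supset\mfk(i)_2\supset\mfk(i)_3=\{0\}$ and $\mfk(i)_2\neq\{0\}$, so that $\mfk(i)$ is non-abelian. In a reductive Lie algebra the solvable radical coincides with the abelian centre, so every nilpotent ideal is central and abelian; a non-abelian nilpotent ideal therefore cannot exist. This contradicts the reductivity of $\mfg^{\phi}$ established above, and disposes of every weak Satake diagram.

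The remaining case is the single diagram $(X,\tau)=\nonweak$ in type $\mathrm{G}_2$, and this is where I expect the real difficulty. Here Example \ref{ex:sp4g2}(2) shows that $\mfk\cong\mfsl_3$, which \emph{is} reductive, so the non-reductivity obstruction is unavailable and a separate argument is needed. My plan is to rule out a fixed-point realization directly at the level of modules: a semisimple $\phi$ with $\mfg^\phi=\mfk$ restricts to the identity on $\mfk$ and to some $\psi\in\GL(\mfg/\mfk)$ having no eigenvalue $1$, and the requirement that $\phi$ be a Lie algebra automorphism forces $\psi$ to be an endomorphism of the six-dimensional $\mfk$-module $\mfg/\mfk$ that is simultaneously compatible with the $\mfk$-valued pairing $(\mfg/\mfk)\times(\mfg/\mfk)\to\mfk$ and with the induced bracket on $\mfg/\mfk$. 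I would therefore first determine the $\mfsl_3$-module structure of $\mfg/\mfk$ together with the shape of these two brackets in an explicit Chevalley basis of $\mathrm{G}_2$, and then check whether a $\psi$ meeting all the constraints while avoiding the eigenvalue $1$ can exist. The hard part will be precisely this compatibility analysis: abstractly $\mfk$ is indistinguishable from a genuine fixed-point copy of $\mfsl_3$ in $\mathrm{G}_2$, so the obstruction must be read off from the fine structure of the embedding produced by the generators $e_1,h_1,f_1,b_2$, and I expect this to require an explicit computation rather than a purely structural argument.
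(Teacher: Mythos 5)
Your reduction to a semisimple automorphism and the resulting reductivity of $\mfg^\phi$ is correct; it amounts to reproving the result of Jacobson that the paper simply cites at this point, and your treatment of all $(X,\tau)\in\WSat(A)$ via the nonabelian nilpotent ideal supplied by Proposition \ref{prop:k:weak:ideal} is exactly the paper's argument. Up to that point the proposal is sound.

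The genuine gap is the one remaining diagram $\nonweak$ in type ${\rm G}_2$. You correctly diagnose that the reductivity obstruction is unavailable there (Example \ref{ex:sp4g2} gives $\mfk\cong\mfsl_3$) and that the obstruction must be read off from the particular embedding, but you then only outline a programme --- classify the endomorphisms $\psi$ of the six-dimensional $\mfk$-module $\mfg/\mfk$ compatible with the induced brackets and having no eigenvalue $1$ --- and explicitly defer the computation. As written, this case is not proved, and it is precisely the case where something beyond ``fixed-point subalgebras of such $\phi$ are reductive'' is needed. The paper's actual argument is far more economical than your plan: assuming $\mfg^\phi=\mfk$, the identity $[\phi(u),x]=\phi([u,x])$ for $x\in\mfk$ together with the explicit relations $[h_2,b_1]=3b_1$, $[h_2,e_1]=-3e_1$ and $[h_2,b_2]=-f_2-b_2$ pins down $\phi(h_2)=\tfrac32(m-1)h_1+mh_2$ and $\phi(f_2)=mf_2+\tfrac12(1-m)b_2$ for a single scalar $m$, and applying $\phi$ to $[f_2,b_2]\in\mfn^+_X\subset\mfk$ forces $m=1$; then $h_2$ and $f_2$ would also be fixed, contradicting $\mfk\cap\mfh=\mfh^\theta=\C h_1$. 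In other words, one does not need to control all of $\mfg/\mfk$, only the two elements $h_2$ and $f_2$, chosen so that their brackets with suitable elements of $\mfk$ land back in $\mfk$. Note also that this computation makes no use of the hypothesis on the minimal polynomial, so for this one diagram the paper in fact rules out every $\phi\in\Aut(\mfg)$. To complete your proof you would either need to carry out your module-theoretic analysis in full or replace it by a targeted computation of this kind.
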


\begin{proof}
We first show this in the case $(X,\tau) =$ \begin{tikzpicture}[baseline=-0.35em,line width=0.7pt,scale=75/100]
\draw[double,->] (.9,0) -- (.95,0);
\draw[line width=.5pt] (.5,0) -- (1,0);
\draw[line width=.5pt] (.5,0.05) -- (1,0.05);
\draw[line width=.5pt] (.5,-0.05) -- (1,-0.05);
\filldraw[fill=black] (.5,0) circle (.1) node[above=.25pt]{\scriptsize $1$};
\filldraw[fill=white] (1,0) circle (.1) node[above=.25pt]{\scriptsize $2$};
\end{tikzpicture}.
Suppose there exists $\phi \in \Aut(\mfg)$ such that $\mfk = \mfg^\phi$.
From $[h_2,b_1] = 3b_1$ and $[h_2,e_1]=-3e_1$ one establishes straightforwardly that $\phi(h_2) \in \mfh$ and hence that $\phi(h_2) = \tfrac{3}{2}(m-1) h_1 + m h_2$ for some $m \in \C$.
Next, from $\theta(f_2) \in \mfg_{\al_1+\al_2}$ it follows that $[h_2,b_2] = -f_2 - b_2$; hence $\phi(f_2) = m f_2 + \tfrac{1}{2}(1-m) b_2$.
Combining this with $[f_2,b_2] \in \mfn^+_X$ one obtains $m=1$.
But this means that $h_2$ and $f_2$ are also fixed points of $\phi$, contrary to assumption.
Hence such $\phi$ does not exist.

Now let $(X,\tau) \in \WSat(A)$.
Since $\mfk$ has a nonabelian nilpotent ideal by Proposition \ref{prop:k:weak:ideal}, $\mfk$ is not a reductive Lie algebra.
Hence \cite[Thm.~1]{Jac62} implies the desired conclusion.
\end{proof}

As a consequence of Proposition \ref{prop:notfixed}, $\mfk$ is not the fixed-point subalgebra of any semisimple (in particular, finite-order) automorphism of $\mfg$.\\

Finally we comment on the centre $\mfz$ of $\mfk$ for $(X,\tau) \in \WSat(A)$.
In Example \ref{ex:sp4g2} (i) we saw that it is one-dimensional if $(X,\tau)= \weakmin$\,.
Let $c \in \mfz$ and as before denote by $i$ the unique element of $I \backslash X$ such that $i = \tau(i)$ and $\al_i(\rho^\vee_X) \notin \Z$.
Proposition \ref{prop:k:weak:ideal} implies that $c=c'+c''$ with $c' \in \mfk_{\hat \imath}$, $c'' \in \mfk(i)$.
Moreover, since $c \in \mfz$ we have $[x,c'] = 0$ for all $x \in \mfk_{\hat \imath}$.
We claim that $c'=0$.
If $\mfk_{\hat \imath}$ is semisimple, we are done.
By a case-by-case analysis using Table \ref{tab:nonSatake}, note that $\mfk_{\hat \imath}$ is semisimple unless $(X,\tau) = \begin{tikzpicture}[baseline=-0.5em,line width=0.7pt,scale=75/100]
\draw[thick] (2,0) -- (3,0);
\draw[thick,dashed] (3,0) -- (4,0);
\draw[double,<-] (4.05,0) --  (4.4,0);
\filldraw[fill=white] (2.0,0) circle (.1) node[below=.2pt]{\scriptsize $1$};
\filldraw[fill=white] (2.5,0) circle (.1) node[below=.2pt]{\scriptsize $2$};
\filldraw[fill=black] (3,0) circle (.1);
\filldraw[fill=black] (4,0) circle (.1);
\filldraw[fill=black] (4.5,0) circle (.1) node[below=.2pt]{\scriptsize $n$};
\end{tikzpicture}$
with $n>2$, in which case $\mfk_{\hat \imath}$ has a one-dimensional centre spanned by $b_1$.
Since $[b_1,b_2] \ne 0$, it follows that also in this case $c'=0$.
Hence $c \in \mfk(i)$.
Since the centre of $\mfk(i)$ is $\mfk(i)_2$ we must have $\mfz \subseteq \mfk(i)_2$.
Define
\[
\mc{J}_{\rm even} := \{ \bm j \in \mc{J} \,\, | \,\, \text{the coefficient of } \al_k \text{ in } \al_{\bm j} \text{ is even for all } k \in I \backslash X \}.
\]

\begin{conj} 
If $(X,\tau) \in \WSat(A)$, a single element of $\displaystyle\bigoplus_{\bm j \in \mc{J}_{\rm even}} \C b_{\bm j} \subset \mfk(i)_2$ generates $\mfz$.
\end{conj}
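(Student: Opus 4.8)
The plan is to reduce the determination of $\mfz$ to an invariant-theoretic problem for the reductive subalgebra $\mfk_{\hat\imath}$, and then to control the invariants using the $\Z$-grading of $\mfg$ by the $\al_i$-coefficient. Write $\mfg=\bigoplus_d \mfg^{(d)}$, where $\mfg^{(d)}$ is spanned by the $\mfg_\al$ with coefficient of $\al_i$ in $\al$ equal to $d$, together with $\mfh\subseteq\mfg^{(0)}$; since $\al_i$ occurs with coefficient $2$ in the highest root (as used in the proof of Proposition \ref{prop:k:weak:ideal}) we have $\mfg^{(d)}=0$ for $|d|>2$. As $\theta=-w_X\tau$ negates the $\al_i$-coefficient, $\theta_{\bm\ga}$ maps $\mfg^{(d)}$ to $\mfg^{(-d)}$, while $\mfk_{\hat\imath}\subseteq\mfg_{I\backslash\{i\}}\subseteq\mfg^{(0)}$. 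First I would show $\mfz=(\mfk(i)_2)^{\ad(\mfk_{\hat\imath})}$: the discussion preceding the conjecture already gives $\mfz\subseteq\mfk(i)_2$, and by the class-$2$ nilpotency in Proposition \ref{prop:k:weak:ideal}(iii) the subspace $\mfk(i)_2$ is central in $\mfk(i)$, so by $\mfk=\mfk(i)\rtimes\mfk_{\hat\imath}$ an element of $\mfk(i)_2$ is central in $\mfk$ exactly when it is annihilated by $\mfk_{\hat\imath}$.

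Next I would identify $\mfk(i)_2$ with $\mfg^{(-2)}$ as a $\mfk_{\hat\imath}$-module. The grading projection $\pi^{(-2)}\colon\mfg\to\mfg^{(-2)}$ commutes with $\ad(\mfg^{(0)})$, hence with $\ad(\mfk_{\hat\imath})$, so its restriction to $\mfk(i)_2$ is $\mfk_{\hat\imath}$-equivariant; and by \eqref{bi:weight} one has $\pi^{(-2)}(b_{\bm j})=f_{\bm j}+(\text{strictly higher-weight terms})$, so this restriction is unitriangular with respect to the weight order and therefore a linear isomorphism, both spaces having dimension $\#\{\bm j\in\mc J: \text{the coefficient of }\al_i\text{ in }\al_{\bm j}\text{ equals }2\}$. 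Since for the $\Z$-grading attached to a single node the piece $\mfg^{(-2)}$ is an irreducible module over the semisimple Levi factor $\mfg_{I\backslash\{i\}}$, and $(\mfg_{I\backslash\{i\}},\mfk_{\hat\imath})$ is a symmetric pair by Proposition \ref{prop:k:weak:ideal}(i), hence a Gelfand pair, the trivial $\mfk_{\hat\imath}$-type occurs in $\mfg^{(-2)}$ with multiplicity at most one. This yields $\dim\mfz\le 1$.

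For the containment in the even part I would use the automorphism $\psi:=\Ad(\chi_\epsilon)$, where $\chi_\epsilon\in\wt H$ is given by $\chi_\epsilon(\al_k)=-1$ for $k\in I\backslash X$ and $\chi_\epsilon(\al_k)=1$ for $k\in X$. Checking the generators of Definition \ref{defn:k} shows $\psi$ preserves $\mfk$, fixing $\mfn^+_X\oplus\mfh^\theta$ and scaling $b_{\bm j}$ by $\epsilon(\al_{\bm j})=(-1)^{c(\bm j)}$, where $c(\bm j)$ is the sum of the coefficients of $\al_k$ in $\al_{\bm j}$ over $k\in I\backslash X$; thus the $+1$-eigenspace of $\psi$ meets $\mfk(i)_2$ exactly in $\bigoplus_{\bm j\in\mc J_{\rm even}}\C b_{\bm j}\cap\mfk(i)_2$. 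Moreover $\psi$ commutes with $\theta_{\bm\ga}|_{\mfg_{I\backslash\{i\}}}$, since $\chi_\epsilon$ is fixed by $w_X$, $\tau$ and $\om$, each of which preserves the parity of the $(I\backslash X)$-coefficients; hence $\psi$ normalises $\mfk_{\hat\imath}$ and stabilises $\mfz$. As $\psi^2=\id$ and $\dim\mfz\le 1$, $\psi$ acts on $\mfz$ by a single sign, so it remains only to show $\mfz\neq 0$ and that this sign is $+1$.

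I expect these last two points to be the main obstacle. Showing $\mfz\neq 0$ amounts to proving that $\mfg^{(-2)}$ is a \emph{spherical} $\mfk_{\hat\imath}$-module, and pinning down the sign amounts to showing that the spherical vector is supported on weights of even $(I\backslash X)$-parity. The commutation of $\psi$ with $\theta_{\bm\ga}|_{\mfg_{I\backslash\{i\}}}$ already forces $\mfz$ to be $\psi$-homogeneous, but I do not see a uniform a priori reason that the sign must be $+1$. Since $\WSat(A)$ is exhausted by the finite list in Table \ref{tab:nonSatake}, the cleanest route is probably a case-by-case verification: for each diagram one decomposes $\mfg^{(-2)}$ as a $\mfk_{\hat\imath}$-module, exhibits the one-dimensional spherical vector explicitly—typically the preimage under $\pi^{(-2)}$ of the lowest root vector $f_\vartheta$, where $\vartheta$ is the highest root—and reads off that its support lies in $\mc J_{\rm even}$, as already witnessed by $b_{(1,1,2)}$ in Example \ref{ex:sp4g2}(i).
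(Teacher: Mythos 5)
The statement you are proving is left as an open conjecture in the paper: the authors offer no proof, only the containment $\mfz \subseteq \mfk(i)_2$ established in the preceding discussion and the single worked instance $b_{(1,1,2)}$ in Example \ref{ex:sp4g2}(i). So there is no argument of theirs to compare yours against; your proposal has to be judged on its own. Its strongest contribution is the multiplicity bound: reducing $\mfz$ to $(\mfk(i)_2)^{\ad(\mfk_{\hat\imath})}$ via the class-$2$ nilpotency, transporting this to $\mfg^{(-2)}$ by the grading projection (the triangularity via \eqref{bi:weight} is correct, since $b_{\bm k}$ has no component in $\mfg^{(\pm 2)}$ when the $\al_i$-coefficient of $\al_{\bm k}$ is at most $1$), invoking irreducibility of the top graded piece over the Levi, and then Cartan--Helgason for the symmetric pair of Proposition \ref{prop:k:weak:ideal}(i) to get $\dim\mfz\le 1$. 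That chain is sound and is a genuinely structural route to half of the conjecture.

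However, as you yourself acknowledge, the proposal does not close: the two assertions that actually constitute the conjecture --- that $\mfz\ne 0$ (i.e.\ that $\mfg^{(-2)}$ is spherical for $\mfk_{\hat\imath}$) and that the generator lies in $\bigoplus_{\bm j\in\mc J_{\rm even}}\C b_{\bm j}$ --- are deferred to an unexecuted case-by-case verification, so what you have is a reduction, not a proof. There is also a quantitative gap in the parity step: a single sign character $\chi_\epsilon$ with $\chi_\epsilon(\al_k)=-1$ for all $k\in I\backslash X$ only isolates the subspace spanned by those $b_{\bm j}$ for which the \emph{total} sum of the $(I\backslash X)$-coefficients of $\al_{\bm j}$ is even, whereas $\mc J_{\rm even}$ demands that \emph{each individual} coefficient be even. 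To get the stated containment you need one sign character per $\tau$-orbit in $I\backslash X$ (each of which does preserve $\mfk_{\bm\ga}$, since $\ga$-compatibility only requires $\chi$ to be constant on $\tau$-orbits and trivial on $Q_X$); for the diagrams in $\WSat(A)$ one checks that $\tau$ restricts to the identity on $I\backslash X$, so this refinement suffices, but it must be said. Finally, even with the refined characters, the sign of the $\psi$-action on the one-dimensional $\mfz$ is exactly the content of the conjecture and is not forced by any of the symmetry considerations you list; your suggestion that the spherical vector projects to $f_\vartheta$ is only verified in the degenerate rank-$2$ example where $\mfg^{(-2)}$ is one-dimensional.
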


\begin{rmk} 
This should be compared with the situation for Satake diagrams and the associated fixed-point subalgebras, where the centre of $\mfk = \mfg^\theta$ has dimension $|\Idiff| + |\Insf|  \in \{0,1\}$.
Casework suggests that it is generated by a linear combination of either $h_i-h_{\tau(i)}$ (for $i \in \Idiff$) or $b_i$ (for $i \in \Insf$) and at least one other standard basis element of $\mfk$.
\hfill \rmkend
\end{rmk}

%%%%%%%%%%%%%%%%%%%%%%%%%%%%%%%%%%%%%%%%%%%%%%%%%%%%%%%%%%%%%%%%

\subsection{The universal enveloping algebra $U(\mfk)$}

Let $(X,\tau) \in \GSat(A)$ and $\bm \ga \in \Ga$.
We identify $\mfk$ with its image in $U(\mfk)$ under the canonical Lie algebra embedding.
The generators of $U(\mfk)$ corresponding to $b_i$ $(i \in I \backslash X)$ can be modified by scalar terms, which is a straightforward generalization of \cite[Cor.~2.9]{Ko14}.

\begin{prop} 
For $(X,\tau) \in \GSat(A)$, $\bm \ga \in \Ga$ and $\bm \si \in \C^{I \backslash X}$, the universal enveloping algebra $U(\mfk_{\bm \ga})_{\bm \si}$ is generated by $e_i,f_i$ ($i \in X$), $h \in \mfh^\theta$ and
\eq{
\label{bis:def}
b_{i;\ga_i,\si_i} = f_i + \ga_i\,\theta(f_i) + \si_i \qq \text{for all } i \in I \backslash X.
}
\end{prop}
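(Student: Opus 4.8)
The plan is to establish the identity of unital subalgebras $U(\mfk_{\bm\ga})_{\bm\si} = U(\mfk_{\bm\ga})$ inside $U(\mfg)$, where $U(\mfk_{\bm\ga})_{\bm\si}$ denotes the unital subalgebra generated by $e_i,f_i$ ($i\in X$), $\mfh^\theta$ and the shifted elements $b_{i;\ga_i,\si_i}$ of \eqref{bis:def}; taking $\bm\si=0$ recovers $U(\mfk_{\bm\ga})$ itself, so the proposition is exactly this equality. First I would recall that, by the PBW theorem, the inclusion $\mfk_{\bm\ga}\hookrightarrow\mfg$ induces an embedding $U(\mfk_{\bm\ga})\hookrightarrow U(\mfg)$ whose image is the unital subalgebra generated by $\mfk_{\bm\ga}$. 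By Definition \ref{defn:k} together with Theorem \ref{mainthrm}, the Lie algebra $\mfk_{\bm\ga}$ is generated by $e_i,f_i$ ($i\in X$), $\mfh^\theta$ and the $b_{i;\ga_i}$ ($i\in I\backslash X$), so these elements generate $U(\mfk_{\bm\ga})$ as a unital algebra.

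The two inclusions are then immediate from the observation that, since each $\si_i\in\C$, formula \eqref{bis:def} reads $b_{i;\ga_i,\si_i}=b_{i;\ga_i}+\si_i\,1$. For $U(\mfk_{\bm\ga})_{\bm\si}\subseteq U(\mfk_{\bm\ga})$ I would use that $1\in U(\mfk_{\bm\ga})$, whence each $b_{i;\ga_i,\si_i}$ lies in $U(\mfk_{\bm\ga})$; the remaining generators are common to both sets. For the reverse inclusion I would use that $U(\mfk_{\bm\ga})_{\bm\si}$ is itself unital, so $b_{i;\ga_i}=b_{i;\ga_i,\si_i}-\si_i\,1$ belongs to it, recovering all standard generators of $\mfk_{\bm\ga}$. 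Combining the two inclusions yields the claim.

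I do not expect a serious obstacle here: the essential content of the statement is conceptual rather than computational, namely that at the classical level the parameters $\si_i$ effect no genuine deformation, since $\si_i\,1$ is central in $U(\mfg)$ and shifting the generators by it merely relabels them while fixing the generated unital algebra. The only points needing care are that one works throughout inside the unital algebra $U(\mfg)$, so that $1$ is available for the shifts, and that the generation of $U(\mfk_{\bm\ga})$ by the standard generators of $\mfk_{\bm\ga}$ is invoked from Theorem \ref{mainthrm}. If one wishes to mirror the filtered-algebra reasoning of \cite[Cor.~2.9]{Ko14} more closely, one can additionally record that the $b_{i;\ga_i,\si_i}$ satisfy the relations obtained from \eqrefs{k:rels1}{k:Serre} by the inhomogeneous scalar corrections forced by the weight-$0$ term in \eqref{k:rels2}; this refinement is, however, not required for the generation statement itself.
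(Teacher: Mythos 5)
Your argument is correct and is precisely the intended one: the paper gives no explicit proof, merely noting the statement is a straightforward generalization of \cite[Cor.~2.9]{Ko14}, and the content is exactly your observation that $b_{i;\ga_i,\si_i}=b_{i;\ga_i}+\si_i\,1$ with $\si_i\in\C$, so shifting generators by scalars leaves the generated unital subalgebra of $U(\mfg)$ unchanged. (A minor remark: the generation of $\mfk_{\bm\ga}$ by the listed elements is just Definition \ref{defn:k}; Theorem \ref{mainthrm} is not needed.)
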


Again, if there is no cause for confusion, we will suppress $\bm \ga$ and $\bm \si$ from the notation.
From Conjecture \ref{conj2} we obtain the following conditional result for $U(\mfk)$, which would address the question raised in \cite[Rmk.~2.10]{Ko14}: for $(X,\tau) \in \GSat(A)$, $\bm \ga \in \Ga$ and $\bm \si \in \C^{I \backslash X}$, $U(\mfk)$ is isomorphic to the algebra with generators $h_i, e_i$ ($i \in X$), $h_i-h_{\tau(i)}$ ($i \in I^*, i \ne \tau(i)$), $b_i$ ($i \in I$) and relations \eqrefs{k:rels1}{k:Serre}.

We may view $U(\mfk)$ as a Hopf subalgebra of $U(\mfg)$ so that Lie algebra automorphisms of $\mfg$ lift to Hopf algebra automorphisms of $U(\mfg)$.
Call two Hopf subalgebras $B,B'$ of $U(\mfg)$ \emph{equivalent} if there exists $\phi \in \Aut_{\rm Hopf}(U(\mfg))$ such that $B' = \phi(B)$.
Define 
\eq{
\wt\Ga := \{ \bm \ga \in \Ga \,\, | \,\, \ga_i=1 \text{ if } i \in I^* \backslash \Idiff \}, \qq
\Si := \{ \bm \si \in \C^{I \backslash X} \,\, | \,\, \si_i = 0 \text{ if } i \in I^* \backslash  \Insf \}.
}

\begin{prop}
Let $(X,\tau) \in \GSat(A)$, $\bm \ga \in \Ga$ and $\bm \si \in \C^{I \backslash X}$.
There exist $\bm {\wt \ga} \in \wt\Ga$ and $\bm \si' \in \Si$ such that $U(\mfk_{\bm \ga})_{\bm \si}$ is equivalent to $U(\mfk_{\bm {\wt \ga}})_{\bm \si'}$.
\end{prop}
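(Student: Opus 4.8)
The plan is to realise the equivalence by a single element of the torus $\Ad(\wt H)\subset\Aut(\mfg,\mfh)$, where $\wt H=\Hom(Q,\C^\times)$, each such element lifting to a Hopf algebra automorphism of $U(\mfg)$. First I would dispose of the scalars $\bm\si$. As every $\si_i$ is a scalar and the subalgebra is unital, replacing each generator $b_{i;\ga_i,\si_i}=b_{i;\ga_i}+\si_i$ by $b_{i;\ga_i}$ leaves the generated subalgebra unchanged; hence $U(\mfk_{\bm\ga})_{\bm\si}$ is independent of $\bm\si$, and similarly for the target. It therefore suffices to produce $\bm{\wt\ga}\in\wt\Ga$ and $\chi\in\wt H$ with $\Ad(\chi)\big(U(\mfk_{\bm\ga})_{\bm 0}\big)=U(\mfk_{\bm{\wt\ga}})_{\bm 0}$, and then to take $\bm\si'=\bm 0\in\Si$.

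Next I would compute the action of $\Ad(\chi)$ on the generators. Since $\Ad(\chi)$ fixes $\mfh^\theta$ pointwise and preserves $\mfg_X$, it maps $U(\mfk_{\bm\ga})_{\bm 0}$ onto the subalgebra generated by $\mfg_X$, $\mfh^\theta$ and the elements $\Ad(\chi)(b_{i;\ga_i})$. Using $f_i\in\mfg_{-\al_i}$, $\theta(f_i)\in\mfg_{-\theta(\al_i)}$ and $\Ad(\chi)|_{\mfg_\mu}=\chi(\mu)\,\id$, one obtains
\[
\chi(\al_i)\,\Ad(\chi)\big(b_{i;\ga_i}\big)=f_i+\ga_i\,\chi\big(\al_i-\theta(\al_i)\big)\,\theta(f_i).
\]
Rescaling each image generator by the nonzero scalar $\chi(\al_i)$ does not change the algebra they generate, so $\Ad(\chi)\big(U(\mfk_{\bm\ga})_{\bm 0}\big)=U(\mfk_{\bm\ga'})_{\bm 0}$ with $\ga_i'=\ga_i\,\chi(\al_i-\theta(\al_i))$ for all $i\in I\backslash X$.

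It remains to choose $\chi$ so that $\ga_i'=1$ for every $i\in I^*\backslash\Idiff$; writing $\al_i-\theta(\al_i)=2\wb{\al}_i$, this is the prescription $\chi(2\wb{\al}_i)=\ga_i^{-1}$. The hard part, and essentially the whole content of the argument, is to see that a consistent such $\chi$ exists and that $\bm\ga'$ then lands in $\wt\Ga$. For this I would use two observations. First, $\wb{\al}_i=\wb{\al}_{\tau(i)}$: indeed $\wb{\al}_i-\wb{\al}_{\tau(i)}=\tfrac12(\id-w_X)(\al_i-\al_{\tau(i)})$ lies in $Q_X\subseteq V^\theta$ as well as in $V^{-\theta}$, hence is $0$. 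Thus the prescribed value is constant on $\tau$-orbits, and combined with $\ga_i=\ga_{\tau(i)}$ for $i\in I^*\backslash\Idiff$ (from $\bm\ga\in\Ga$) it gives $\ga_i'=\ga_{\tau(i)}'=1$ for such $i$, while leaving $\ga_j'$ free on the orbits meeting $\Idiff$; this is precisely membership in $\wt\Ga$. Second, $\{\wb{\al}_i\}_{i\in I^*}$ is a basis of $V^{-\theta}$, since these elements span $V^{-\theta}$ and $|I^*|=\dim V^{-\theta}$ by \eqref{htheta:expl}. Hence $\{2\wb{\al}_i\}_{i\in I^*\backslash\Idiff}$ is linearly independent in $Q$, the prescription defines a homomorphism on the subgroup it generates, and this extends to some $\chi\in\Hom(Q,\C^\times)$ because $\C^\times$ is divisible. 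With this $\chi$ and $\bm{\wt\ga}:=\bm\ga'$, together with $\bm\si'=\bm 0$, the proof is complete.
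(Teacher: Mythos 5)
Your proof is correct, and it is worth comparing the two halves separately. For the $\bm\ga$-normalization you use exactly the mechanism the paper invokes (it delegates this step to the analogue of \cite[Prop.~9.2~(i)]{Ko14}): conjugation by an element of $\Ad(\wt H)$, which rescales $b_{i;\ga_i}$ to a multiple of $b_{i;\ga_i\chi(\al_i-\theta(\al_i))}$ while fixing $U(\mfg_X)$ and $\mfh^\theta$. What you add is the explicit consistency check that the prescription $\chi(2\wb{\al}_i)=\ga_i^{-1}$ on $I^*\backslash\Idiff$ is well posed, via the two facts $\wb{\al}_i=\wb{\al}_{\tau(i)}$ and the linear independence of $\{\wb{\al}_i\}_{i\in I^*}$ (using $|I^*|=\dim V^{-\theta}$ from \eqref{htheta:expl}) together with divisibility of $\C^\times$; this is a genuine and welcome filling-in of details that the paper leaves to the citation. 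For the $\bm\si$-normalization your route is different and simpler: since in the classical setting $\si_i$ enters only as a scalar multiple of $1$ in a unital subalgebra of $U(\mfg)$, the subalgebra is outright independent of $\bm\si$ and one may take $\bm\si'=\bm 0\in\Si$. The paper instead argues via Proposition \ref{prop:kprime} that $b_{i;\wt\ga_i}$ lies in the derived subalgebra for $i\notin\Insf$, which lets it keep $\si'_i=\wt\si_i$ on $\Insf$; that argument is the one that survives quantization, where the shift is $\si_iK_i^{-1}$ rather than a scalar and the role of $\Insf$ becomes essential, but for the statement as literally posed your shortcut is perfectly valid.
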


\begin{proof}
The existence of $\bm {\wt \ga}$ follows from an argument analogous to the proof of \cite[Prop.~9.2 (i)]{Ko14}.
Hence $U(\mfk_{\bm \ga})_{\bm \si}$ is equivalent to $U(\mfk_{\bm {\wt \ga}})_{\bm {\wt \si}}$ for some $\bm{\wt \si} \in \C^{I \backslash X}$.
Regarding the existence of $\bm \si' \in \Si$, note that $b_{i;\wt \ga_i} \in (\mfk_{\bm {\wt \ga}})'$ if $i \notin \Insf$, by Proposition \ref{prop:kprime}.
Hence $U(\mfk_{\bm {\wt \ga}})_{\bm {\wt \si}}$ is already generated by $e_i,f_i$ ($i \in X$), $h \in \mfh^\theta$, $b_{i;\wt \ga_i,0}$ for $i \in (I \backslash X)\backslash \Insf$ and $b_{i;\wt \ga_i,\wt s_i}$ for $i \in \Insf$.
Hence we may take $\si'_i = \wt \si_i$ if $i \in \Insf$ and $\si'_i = 0$ otherwise.
\end{proof}

%%%%%%%%%%%%%%%%%%%%%%%%%%%%%%%%%%%%%%%%%%%%%%%%%%%%%%%%%%%%%%%%%%%%%%%%%%%%%%
% Section 4
%%%%%%%%%%%%%%%%%%%%%%%%%%%%%%%%%%%%%%%%%%%%%%%%%%%%%%%%%%%%%%%%%%%%%%%%%%%%%%

\section{Quantum pair algebras and the universal K-matrix revisited} \label{sec:quantum}

Assume the $d_i$ are dyadic rationals and let $\K$ be a quadratic closure of $\C(q)$ where $q$ is an indeterminate, so that $q_i := q^{d_i} \in \K$ for all $i \in I$.
The Drinfeld-Jimbo quantum group $U_q(\mfg)$ is an associative unital algebra over $\K$ which quantizes the universal enveloping algebra $U(\mfg)$.
It is generated by $\{ E_i, F_i, K_i^{\pm 1} \}$ where $i \in I$, satisfying the relations given in e.g.~\cite[3.1.1]{Lu94}.
The Hopf algebra structure is the one defined in \cite[3.1.3, 3.1.11, 3.3]{Lu94}.
We write $U_q(\mfh)$ for the Hopf subalgebra generated by $K_i^{\pm 1}$ for $i \in I$.
We also write $U_q(\mfn^\pm)$ for the coideal subalgebras generated by the $E_i$ and $F_i$ ($i \in I$), respectively.
The algebra $U_q(\mfg)$ is $Q$-graded in terms of the root spaces $U_q(\mfg)_\al   = \{ u \in U_q(\mfg) \, | \, K_i u K_i^{-1} = q_i^{\al(h_i)} u \text{ for all } i \in I \}$.

We discuss some automorphisms of $U_q(\mfg)$.
Diagram automorphisms act (by relabelling) as Hopf algebra automorphisms on $U_q(\mfg)$.
Moreover, we have Lusztig's automorphisms $T_i$ for $i \in I$, given as $T''_{i,1}$ in \cite[37.1.3]{Lu94}, which generate an image of the braid group in $\Aut_{\rm alg}(U_q(\mfg))$ and reproduce $\Ad(s_i)$ as $q \to 1$.
They satisfy $T_i(U_q(\mfg)_\al) \subseteq U_q(\mfg)_{s_i(\al)}$ for all $\al \in Q$ and $T_i(K_j)=K_j K_i^{-a_{ij}}$ for all $j \in I$.
For $X \subseteq I$ with $w_X = s_{i_1} \cdots s_{i_\ell}$ a reduced decomposition we write $T_{w_X} = T_{i_1} \cdots T_{i_\ell}$.
If $\tau \in \Aut(A)$ stabilizes $X$ then $[\tau,T_{w_X}]=0$.
Finally, the assignments
\eq{  \label{omq:defn}
\om_q(E_i) = - K_i^{-1} F_i, \qq \om_q(F_i) = - E_i K_i, \qq \om_q(K_i^{\pm 1}) = K_i^{\mp 1} \qq \qq \text{for } i \in I
}
define $\om_q \in \Aut_{\rm alg}(U_q(\mfg))$ which is a particular quantum analogue of the Chevalley involution commuting with each $T_i$, see \cite[Lemma 7.1]{BK19}, and with $\Aut(A)$.

We now discuss the changes to definitions and statements in the papers \cite{Ko14,BK15,BK19,Ko20,DK19} needed to generalize these works to all generalized Satake diagrams.

%%%%%%%%%%%%%%%%%%%%%%%%%%%%%%%%%%%%%%%%%%%%%%%%%%%%%%%%%%%%%%%%%%%%%%%%%%%%%%

In the remainder of this section we assume $(X,\tau) \in \CD(A)$.
The quantum analogon of the map $\theta = \Ad(w_X) \tau \om$ is the map
\eq{ \label{thetaq:defn}
\theta_q = \theta_q(X,\tau) = T_{w_X} \tau\,\om_q \in \Aut_{\rm alg}(U_q(\mfg)).
}
The quantization of the fixed-point subalgebra in the formalism by \cite{Ko14} relies on the description of $\mfg^\theta$ in terms of generators given in \cite[Lemma 2.8]{Ko14}.
Our $\mfk(X,\tau)$ by definition can be quantized to a right coideal subalgebra in the same way.

\begin{defn} \label{defn:B}
Let $\bm \ga \in (\K^\times)^{I \backslash X}$ and $\bm \si \in \K^{I \backslash X}$.
The \emph{quantum pair algebra} $B = B_{\bm \ga,\bm \si}(X,\tau)$ is the coideal subalgebra generated by $U_q(\mfg_X)$, $U_q(\mfh^\theta)$ and the elements
\eq{
B_i = B_{i; \bm \ga, \bm \si} = F_i + \ga_i \theta_q(F_i K_i) K_i^{-1} + \si_i K_i^{-1} \qq \text{for all } i \in I \backslash X.
}
\end{defn}

\begin{rmk} \label{rmk:gamma:c} \mbox{}
\begin{enumerate}
\item Note that $U_q(\mfh^\theta)$ is denoted ${U^0_\Theta}'$ in \cite{Ko14}.
In \cite[Eq.~(2.4)]{Le03} and \cite[Def.~5.1]{Ko14} $\si_i$ is denoted $s_i$ (we use a different notation to avoid confusion with the simple reflections $s_i$ and the group homomorphism $s \in \wt H$).
The scalar $\ga_i$ corresponds to the scalar $d_i$ in \cite[Eq.~(2.4)]{Le03} and the scalar $c_i$ in \cite[Def.~5.1]{Ko14}.
More precisely, the Kolb-Balagovi\'{c} formalism fits in our approach upon setting 
\eq{
\ga_i = s(\al_{\tau(i)})\, c_i \qq \text{for all } i \in I \backslash X,
} 
also see \cite[Eq.~(7.7)]{BK19}.

\item Comparing with \cite[Def.~4.3]{Ko14} or \cite[Def.~5.4 and Eq.~(5.4)]{BK19}, note the absence of the factor $\Ad(s)$ from our definition of $\theta_q$.
Here $s \in \wt H$ is required to satisfy \cite[Eqs.~(5.1)-(5.2)]{BK19} so that $\theta_q$ specializes to the appropriate Lie algebra involution in the case $(X,\tau) \in \Sat(A)$, see \cite[Prop.~10.2]{Ko14}.
In our case this is unnecessary; to compare with these earlier papers there are in fact two natural choices for $s$, see \cite[Rmk.~5.2]{BK19}, one of which is satisfied for instance by $s = \chi_{(1,1,\ldots,1)}$ (see \eqref{theta_gamma} for the notation) which takes values in $\{ \pm 1 \}$.\hfill \rmkend
\end{enumerate}
\end{rmk}

Moreover, if $(X,\tau) \in \GSat(A)$ and the tuples $\bm \ga$, $\bm \si$ lie in the sets
\eq{ \label{parametersets:defn}
\begin{aligned}
\Ga_q &= \{ \bm \ga \in (\K^\times)^{I \backslash X} \, \, | \,\, \ga_i = \ga_{\tau(i)} \text{ if } i \in I^* \backslash \Idiff \}, \\
\Si_q &= \{ \bm \si \in \K^{I \backslash X} \,\, | \,\, \, \si_i = 0  \text{ if } i \in I^* \backslash \Insf \},
\end{aligned}
}
respectively, then in \cite[Sec.~5.3 and Sec.~6]{Ko14} decompositions of $B$ are obtained which culminate in the quantum analogue of \eqref{k:intersection}, namely $B \cap U_q(\mfh) = U_q(\mfh^\theta)$.
The key condition for Satake diagrams, see \eqref{Sat:defn}, is only used in \cite[Proof of Lemma 5.11, Step 1]{Ko14}, but it is clear that what is needed there is precisely the weaker condition appearing in the definition of a generalized Satake diagram, see Definition \ref{defn:GSat}.
Furthermore, in \cite[Thms.~7.4 and 7.8]{Ko14} quantum Serre relations for the $B_i$ are found for low values of $-a_{ij}$ and the results were extended in \cite[Thm.~3.7, Case 4]{BK15}; we discuss the generalization to $\GSat(A)$ in Section \ref{sec:qSerre}.
The rest of \cite{Ko14} is applicable without change in the setting of generalized Satake diagrams; in particular in the specialization $q \to 1$ one recovers $U(\mfk)$, see \cite[Sec.~10]{Ko14}.

%%%%%%%%%%%%%%%%%%%%%%%%%%%%%%%%%%%%%%%%%%%%%%%%%%%%%%%%%%%%%%%%%%%%%%%%%%%%%%

\subsection{The bar involution for the subalgebra $B$}

The bar involution $\overline{ \phantom{x} }$ of $U_q(\mfg)$ is the algebra automorphism of $U_q(\mfg)$ fixing $E_i,F_i$ and inverting $K_i^{\pm 1}$ and $q$; it plays a crucial role in Lusztig's construction of the quasi R-matrix, see \cite{Lu94}.
In order to have a natural modification of Lusztig's theory of bar involutions and quasi R-matrices to the setting of quantum symmetric pair algebras, the paper \cite{BK15} deals with the existence of an analogous map of $B$.
This follows earlier work by \cite{ES13} and \cite{BW18a} in the case of certain quantum symmetric pairs of type AIII.
More precisely, for suitable parameters $\bm \ga$, there exists an involutive algebra automorphism $\overline{ \phantom{x} }^B : B \to B$ which coincides with $\overline{ \phantom{x} }$ on $U_q(\mfg_X)U_q(\mfh^\theta)$ and satisfies $\overline{ B_i }^B = B_i$ for $i \in I \backslash X$, see \cite[Thm.~3.11, Cor.~3.13, Rmk.~3.15]{BK15}.
The defining condition of Satake diagrams is not used explicitly in \cite{BK15} but casework is used in the results \cite[Props.~2.3 and 3.8]{BK15} which needs to be extended to the new diagrams in Table \ref{tab:nonSatake}, which we will now explain.
We also note that the result \cite[Prop.~2.3]{BK15} was generalized in \cite[Thm.~4.1]{BW18c} to the Kac-Moody setting, but this did not explicitly include the cases where $B$ is defined in terms of $(X,\tau) \in \GSat(A) \backslash \Sat(A)$.
Here we provide an elementary proof for $\mfg$ of finite type which works for all compatible decorations.
 
Let $\si$ be the unique algebra anti-automorphism of $U_q(\mfg)$ which fixes $E_i$ and $F_i$ and inverts $K_i$.
For $i \in I$, denote by $r_i$ Lusztig's \emph{right skew derivation}, see \cite[1.2.13]{Lu94}; it is the unique linear map on $U_q(\mfn^+)$ such that for all $x,y \in U_q(\mfn^+)$ with $y \in U_q(\mfg)_\mu$ ($\mu \in Q^+$) we have
\eq{ \label{ri:def}
r_i(x y) = q_i^{\mu(h_i)} r_i(x) y + x r_i(y).
}
We denote $[x,y]_p:=xy-pyx$ for $x,y \in U_q(\mfg)$ and $p \in \K$; note that $\si([x,y]_p) = [\si(y),\si(x)]_p$.
The definition of $T_j$ implies that $T_j(E_i)=E_i$ if $a_{ji}=0$ and $T_j(E_i) = [E_j,E_i]_{q_j^{-1}}$ if $a_{ji}=-1$.
We start with a lemma that simplifies the proof drastically.
Call a connected component of $X$ \emph{simple} if it is of the form $\{j\}$ for some $j \in I$ such that $a_{ij}=a_{ji} \in \{0,-1\}$ for all $i\in I \backslash X$.

\begin{lemma} \label{lem:simplecomponent}
Let $i \in I \backslash X$ and suppose $\emptyset \ne X(i) = X_1 \cup \cdots \cup X_\ell$ is a decomposition into connected components.
If $i \ne \tau(i)$ then $\ell \le 1$ and if $i = \tau(i)$, all $X_s$ are simple except at most one.
Denote by $Y$ the non-simple connected component of $X(i)$ if present and otherwise let $Y$ be any simple connected component.
If $(r_i T_{w_Y})(E_i)$ is fixed by $\si \tau$ then so is $(r_i T_{w_X})(E_i)$.
\end{lemma}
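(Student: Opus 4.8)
The plan is to prove the three assertions by reducing everything to a single connected component of $X$. Since $T_j(E_i)=E_i$ whenever $a_{ji}=0$, and $w_X$ factorizes over the connected components of $X$, the components not neighbouring $i$ fix $E_i$; hence $T_{w_X}(E_i)=T_{w_{X(i)}}(E_i)$ and I may work throughout with $X(i)=X_1\cup\cdots\cup X_\ell$. The two combinatorial claims---that $\ell\le1$ when $i\ne\tau(i)$, and that at most one $X_s$ is non-simple when $i=\tau(i)$---I would obtain by inspecting the admissible local configurations: a finite-type Dynkin diagram is a forest of valency at most $3$ with very few multiple bonds, and the requirements $\tau(X)=X$ and $\tau|_X=\tau_{0,X}$ sharply restrict how the $\tau$-orbit of $i$ can attach to $X$. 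This is a finite verification, matching the configurations in the classification.

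The substance is the last assertion. The key step is a \emph{multiplicativity} property,
\[
(r_iT_{w_{X(i)}})(E_i)=\prod_{s=1}^{\ell}(r_iT_{w_{X_s}})(E_i),
\]
where the factors lie in the mutually commuting subalgebras $U_q(\mfn^+_{X_s})$ and so may be multiplied in any order. I would prove this by induction on $\ell$. Writing $X(i)=X_1\sqcup X'$ with $X'=X_2\cup\cdots\cup X_\ell$ and $T_{w_{X(i)}}=T_{w_{X'}}T_{w_{X_1}}$, put $u=T_{w_{X_1}}(E_i)=\sum_a A_aE_iB_a$ with $A_a,B_a\in U_q(\mfn^+_{X_1})$; there is a single factor $E_i$ because the $\al_i$-coefficient of the weight equals $1$. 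As $T_{w_{X'}}$ fixes every $E_j$ with $j\in X_1$, it merely replaces the inner $E_i$ by $P:=T_{w_{X'}}(E_i)$. Now $r_i$ annihilates $A_a$ and $B_a$, so the skew-derivation rule \eqref{ri:def} collapses $r_i(A_aPB_a)$ to $q_i^{\nu_a(h_i)}A_a\,r_i(P)\,B_a$, where $\nu_a$ is the weight of $B_a$; since $r_i(P)\in U_q(\mfn^+_{X'})$ commutes with $A_a,B_a$, I pull it out and the remaining sum reassembles as $r_i(u)=(r_iT_{w_{X_1}})(E_i)$, giving $(r_iT_{w_{X(i)}})(E_i)=(r_iT_{w_{X'}})(E_i)\,(r_iT_{w_{X_1}})(E_i)$. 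The inductive hypothesis then produces the full commuting product.

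Next I would evaluate the simple factors. A simple component neighbouring $i$ must satisfy $a_{ij_s}=a_{j_si}=-1$, so $q_{j_s}=q_i$, and a one-line application of \eqref{ri:def} gives $(r_iT_{j_s})(E_i)=(1-q_i^{-2})E_{j_s}$: a fixed nonzero scalar, the same for every simple component, times $E_{j_s}$. Now $\si\tau$ is an anti-automorphism that fixes scalars, sends each $E_{j_s}$ to $E_{\tau(j_s)}$, and permutes the commuting component subalgebras; since $i=\tau(i)$ it preserves $X(i)$, and as $\tau\in\Aut(A)$ respects simplicity it permutes the simple components among themselves, fixing the unique non-simple component $Y$ when one is present. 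Writing $\eta=(r_iT_{w_Y})(E_i)$, the product $\prod_s(1-q_i^{-2})E_{j_s}$ over the simple components is $\si\tau$-invariant by a reindexing that leaves the equal-scalar commuting product unchanged; hence $\si\tau\big((r_iT_{w_X})(E_i)\big)=\si\tau(\eta)\cdot\prod_s(1-q_i^{-2})E_{j_s}$, which equals $(r_iT_{w_X})(E_i)$ exactly when $\si\tau(\eta)=\eta$, the hypothesis. (When every component is simple the full product is unconditionally $\si\tau$-invariant, so the implication holds a fortiori.)

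I expect the main obstacle to be the bookkeeping in the multiplicativity step: confirming that $r_i(P)$ really commutes past the $X_1$-factors and that the leftover sum recombines precisely into $(r_iT_{w_{X_1}})(E_i)$. By contrast the simple-component evaluation is immediate and the combinatorial claims are finite checks. A secondary delicate point is the interplay of the anti-automorphism $\si$ with the $\tau$-permutation of components, where one uses crucially that all simple factors carry the identical scalar $1-q_i^{-2}$, so that reindexing by $\tau$ leaves their product fixed.
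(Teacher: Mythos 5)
Your proposal is correct and follows essentially the same route as the paper: the paper peels off one simple component at a time by induction, computing $(r_iT_{w_X})(E_i)=(1-q_j^{-2})E_j\,(r_iT_{w_{X'}})(E_i)$ via the same skew-derivation collapse and commutativity of orthogonal components, and concludes from $\tau(j)=j$; your version merely packages this as a single multiplicativity statement over all connected components before evaluating the simple factors.
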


\begin{proof}
The first part of the Lemma follows from the classification of Satake diagrams in \cite{Ar62} and an inspection of Table \ref{tab:nonSatake}.
Since adding simple components does not change the statement, it is true for all compatible decorations.

The second part is proven by induction with respect to the number of simple components.
If there are none, then $X(i) = Y$ and the statement is true.
Otherwise, by the induction hypothesis we may suppose $X(i) = X' \cup \{j\}$ where $(\si \tau)(r_i T_{w_{X'}})(E_i)) = (r_i T_{w_{X'}})(E_i)$, $\{ j \}$ is a simple component of $X$ and $a_{jk} = 0$ for all $k \in X'$.
Hence $T_{X(i)} = T_{w_{X'}} T_j$ so that $T_{w_X}(E_i) = T_{X(i)}(E_i) = T_{w_{X'}}([E_j,E_i]_{q_j^{-1}}) = [E_j,T_{w_{X'}}(E_i)]_{q_j^{-1}}$.
By \eqref{ri:def} we have $(r_i T_{w_X})(E_i) = [E_j,(r_i T_{w_{X'}})(E_i)]_{q_j^{-2}}$.
Since $(r_i T_{w_{X'}})(E_i)$ lies in $U_q(\mfg_{X'})$ it commutes with $E_j$.
Hence
\eq{ 
(r_i T_{w_X})(E_i) = (1-q_j^{-2}) E_j (r_i T_{w_{X'}})(E_i) = (1-q_j^{-2}) (r_i T_{w_{X'}})(E_i) E_j.
}
Since $\tau(j)=j$, applying $\si \tau$ yields the desired result.
\end{proof}

\begin{prop} \label{prop:nui1}
For all $i \in I \backslash X$, $(r_i T_{w_X})(E_i)$ is fixed by $\si \tau$.
\end{prop}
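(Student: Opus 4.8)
The plan is to reduce the statement, via Lemma \ref{lem:simplecomponent}, to the case of a single connected component of $X(i)$ and then to settle that case by a short finite computation. First I would dispose of the possibility $X(i) = \emptyset$: then $a_{ij}=0$ for every $j \in X$, so $T_{w_X}(E_i) = E_i$ and hence $(r_i T_{w_X})(E_i) = r_i(E_i) = 1$, which is clearly fixed by $\sigma\tau$. For $X(i) \neq \emptyset$, Lemma \ref{lem:simplecomponent} reduces the claim to showing that $(r_i T_{w_Y})(E_i)$ is $\sigma\tau$-fixed, where $Y$ is the distinguished connected component of $X(i)$ (the non-simple one if present, otherwise a simple one). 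At the outset I would record the structural fact that $\tau|_X = \tau_{0,X}$ factorizes over the connected components of $X$ and hence preserves each of them; in particular $\tau(Y)=Y$, while $\sigma$ fixes every $E_k$ and $\tau(E_k) = E_{\tau(k)}$ with $\tau(k)\in Y$ for $k \in Y$.

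When $Y = \{j\}$ is simple, a direct computation from $T_j(E_i) = [E_j,E_i]_{q_j^{-1}}$ together with the skew-Leibniz rule \eqref{ri:def} gives $(r_i T_{w_Y})(E_i) = (1-q_j^{-2})E_j$; since $\tau(j)=j$ for a singleton component and $\sigma$ fixes $E_j$, this is $\sigma\tau$-invariant. For non-simple $Y$ the method is identical but the computation of $T_{w_Y}(E_i)$ is the real content. I would fix a reduced word for $w_Y$ adapted to the node(s) of $Y$ adjacent to $i$ and $\tau(i)$, apply the formulae for the $T_k$ ($k \in Y$) iteratively to write $T_{w_Y}(E_i)$ as an explicit iterated $q$-commutator in the $E_k$ ($k \in Y$) and $E_i$, and then apply $r_i$ using \eqref{ri:def} to land in $U_q(\mfg_Y)^+$. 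The point to verify is that the resulting expression is invariant under the anti-automorphism that simultaneously reverses $q$-commutators (the action of $\sigma$; recall $\sigma([x,y]_p)=[\sigma(y),\sigma(x)]_p$) and relabels $Y$ by $\tau_{0,Y}$ (the action of $\tau$ on $Y$).

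The finite list of configurations to check is governed by the classification of (generalized) Satake diagrams: Araki's list \cite{Ar62} for $(X,\tau)\in\Sat(A)$ together with the extra diagrams of Table \ref{tab:nonSatake} for $(X,\tau)\in\GSat(A)\backslash\Sat(A)$. I expect the main obstacle to be the multiply-laced components (types ${\rm B}$, ${\rm C}$, ${\rm F}_4$, ${\rm G}_2$) and the doubly-attached configurations with $i=\tau(i)$: there the braid-operator computation produces several nontrivial powers of $q$, and the work is to check that these powers match the $\tau_{0,Y}$-relabelling exactly, so that $\sigma\tau$ fixes the outcome. The genuinely new cases beyond \cite{BK15} are precisely those arising from Table \ref{tab:nonSatake}, which Araki's classification does not include; these are what must be computed to make the proof work for all compatible decorations.
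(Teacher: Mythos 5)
Your plan is correct and follows essentially the same route as the paper: reduce via Lemma \ref{lem:simplecomponent} to a single connected component, dispose of the empty and rank-one cases by the weight/skew-derivation computation, invoke \cite[Proof of Prop.~2.3]{BK15} for all Satake diagrams, and finish by finite casework on the diagrams of Table \ref{tab:nonSatake} with $X$ connected and $|X|>1$. The only content you defer --- the explicit braid-operator computations for the remaining configurations --- is exactly what the paper supplies, namely the observation that the one infinite family is identical to the BII case of \cite{BK15} and a worked computation for the ${\rm F}_4$ diagram (with the other three exceptional cases left to the reader).
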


\begin{proof}
The proof is essentially casework, but first we make some observations.

\begin{enumerate}
\item 
Since $T_{w_X}(E_i) = T_{X(i)}(E_i)$ we may assume that $\{ i, \tau(i) \}$ is the only $\tau$-orbit outside $X$.

\item
We may assume $X$ is nonempty as otherwise $(r_i T_{w_X})(E_i) = 1$.

\item 
By Lemma \ref{lem:simplecomponent} it suffices to prove the statement in the case that $X$ is connected.

\item 
If $|X|=1$, we write $X=\{j\}$ with $\tau(j)=j$.
Then $T_{w_X}(E_i) = T_j(E_i) \in U_q(\mfg)_{s_j(\al_i)} \cap U_q(\mfn^+)$.
Hence $(r_i T_{w_X})(E_i) \in U_q(\mfg)_{s_j(\al_i)-\al_i} \cap U_q(\mfn^+) = \K E_j^{-a_{ji}}$ so it is fixed by $\si \tau$.

\item In \cite[Proof of Prop.~2.3]{BK15} the statement was proved for all Satake diagrams.
\end{enumerate}

Hence it suffices to prove the statement for those diagrams in Table \ref{tab:nonSatake} where the node $i$ is the only node outside $X$, $X$ is connected and $|X|>1$.
The only infinite family satisfying this condition is given by 
\begin{tikzpicture}[baseline=-0.2em,line width=0.7pt,scale=75/100]
\draw[thick] (2.5,0) -- (3,0);
\draw[thick,dashed] (3,0) -- (4,0);
\draw[double,<-] (4.05,0) --  (4.4,0);
\filldraw[fill=white] (2.5,0) circle (.1);
\filldraw[fill=black] (3,0) circle (.1);
\filldraw[fill=black] (4,0) circle (.1);
\filldraw[fill=black] (4.5,0) circle (.1);
\end{tikzpicture}.
In this case the proof is identical to the proof for the type BII case in \cite[Prop.~2.3]{BK15}.
The exceptional diagrams satisfying this condition are
\[
\begin{tikzpicture}[baseline=-0.35em,line width=0.7pt,scale=75/100]
\draw[thick] (1,.3) -- (.5,.3) -- (0,0) -- (.5,-.3) -- (1,-.3);
\draw[thick] (0,0) -- (-.5,0);
\draw[<->,gray] (1,.2) -- (1,-.2);
\draw[<->,gray] (.5,.2) -- (.5,-.2);
\filldraw[fill=black] (1,.3) circle (.1);
\filldraw[fill=black] (1,-.3) circle (.1);
\filldraw[fill=black] (.5,.3) circle (.1);
\filldraw[fill=black] (.5,-.3) circle (.1);
\filldraw[fill=black] (0,0) circle (.1);
\filldraw[fill=white] (-.5,0) circle (.1);
\end{tikzpicture}
\qq
\begin{tikzpicture}[baseline=-0.35em,line width=0.7pt,scale=75/100]
\draw[thick] (-1.1,.3) -- (-.6,.3) -- (0,0) -- (-.4,-.3) -- (-1.4,-.3);
\draw[thick] (0,0) -- (.5,0);
\filldraw[fill=white] (-1.1,.3) circle (.1);
\filldraw[fill=black] (-.6,.3) circle (.1);
\filldraw[fill=black] (0,0) circle (.1);
\filldraw[fill=black] (-.4,-.3) circle (.1);
\filldraw[fill=black] (-.9,-.3) circle (.1);
\filldraw[fill=black] (-1.4,-.3) circle (.1);
\filldraw[fill=black] (.5,0) circle (.1);
\end{tikzpicture}
\qq
\begin{tikzpicture}[baseline=-0.35em,line width=0.7pt,scale=75/100]
\draw[thick] (-1.9,-.3) -- (-.4,-.3) -- (0,0) -- (-.6,.3) -- (-1.1,.3);
\draw[thick] (0,0) -- (.5,0);
\filldraw[fill=black] (-1.1,.3) circle (.1);
\filldraw[fill=black] (.5,0) circle (.1);
\filldraw[fill=black] (-.6,.3) circle (.1);
\filldraw[fill=black] (0,0) circle (.1);
\filldraw[fill=black] (-.4,-.3) circle (.1);
\filldraw[fill=black] (-.9,-.3) circle (.1);
\filldraw[fill=black] (-1.4,-.3) circle (.1);
\filldraw[fill=white] (-1.9,-.3) circle (.1);
\end{tikzpicture}
\qq
\begin{tikzpicture}[baseline=-0.35em,line width=0.7pt,scale=75/100]
\draw[thick] (.5,0) -- (1,0);
\draw[double,->] (1,0) -- (1.45,0);
\draw[thick] (1.5,0) -- (2,0);
\filldraw[fill=white] (.5,0) circle (.1);
\filldraw[fill=black] (1,0) circle (.1);
\filldraw[fill=black] (1.5,0) circle (.1);
\filldraw[fill=black] (2,0) circle (.1);
\end{tikzpicture}
\]
We give here the proof for the last case which is very much in the spirit of \cite[Proof of Prop.~2.3]{BK15}; the proofs for the other three cases are similar and are left to the reader.

We label the nodes as \begin{tikzpicture}[baseline=-0.2em,line width=0.7pt,scale=75/100]
\draw[thick] (.5,0) -- (1,0);
\draw[double,->] (1,0) -- (1.45,0);
\draw[thick] (1.5,0) -- (2,0);
\filldraw[fill=white] (.5,0) circle (.1) node[above]{\scriptsize $1$};
\filldraw[fill=black] (1,0) circle (.1) node[above]{\scriptsize $2$};
\filldraw[fill=black] (1.5,0) circle (.1) node[above]{\scriptsize $3$};
\filldraw[fill=black] (2,0) circle (.1) node[above]{\scriptsize $4$};
\end{tikzpicture}
and assume $d_1=d_2=2$ and $d_3=d_4=1$ for convenience.
Note that $\tau = \id$.
The reduced decompositions $w_X =( s_2 s_3 s_2 s_4 s_3 s_2 )(s_4 s_3 s_4) = (s_4 s_3 s_4) ( s_2 s_3 s_2 s_4 s_3 s_2 )$ yield 
\eq{ \label{TX:E1}
T_{w_X}(E_1) = (T_2 T_3 T_2 T_4 T_3 T_2)(E_1) = (T_4 T_3 T_4 T_{w_X})(E_1).
}
From the first expression we readily obtain
\eq{
T_{w_X}(E_1) = [(T_2 T_3 T_2 T_4 T_3)(E_2),[(T_2 T_3)(E_2),[E_2,E_1]_{q^{-2}}]_{q^{-2}}]_{q^{-2}}.
}
Now note that $(s_3 s_4 s_2 s_3 s_2 s_4 s_3)(\al_2) = (s_3s_2s_3)(\al_2) = \al_2$ and $s_3 s_4 s_2 s_3 s_2 s_4 s_3$ and $s_3 s_2 s_3$ are reduced elements in $W$.
Appealing to \cite[Prop.~8.20]{Jan96} we arrive at 
\eq{
T_{w_X}(E_1) = [ (T_4^{-1} T_3^{-1})(E_2) , [T_3^{-1}(E_2),[E_2,E_1]_{q^{-2}}]_{q^{-2}}]_{q^{-2}}
}
so that \eqref{ri:def} implies
\eq{
(r_1 T_{w_X})(E_1) = (1-q^{-4})  [ (T_4^{-1} T_3^{-1})(E_2) , [T_3^{-1}(E_2), E_2]_{q^{-4}}]_{q^{-4}}.
}
Applying $\si \tau$ and using $T_i \si = \si T_i^{-1}$ (see e.g.~\cite[37.2.4]{Lu94}) we obtain
\eq{ \label{sigmatau}
\begin{aligned}
&(\si \tau)((r_1 T_{w_X})(E_1)) = (1-q^{-4})  [ [E_2,T_3(E_2)]_{q^{-4}},(T_4 T_3)(E_2)]_{q^{-4}} \\
& \qq = (1-q^{-4}) \left( q^{-4} [T_3(E_2),T_4([T_3(E_2),E_2])] + [E_2,[T_3(E_2),(T_4 T_3)(E_2)]_{q^{-4}}]_{q^{-4}} \right).
\end{aligned}
}
By the q-Serre relation $E_2^2 E_3 - (q^2+q^{-2}) E_2 E_3 E_2 + E_3 E_2^2 = 0$ we have
\eq{
\begin{aligned}
[T_3(E_2),E_2] &= \frac{1}{q+q^{-1}}[ E_3^2 E_2 - (1+q^{-2}) E_3 E_2 E_3 + q^{-2} E_2 E_3^2,E_2]  \\
&= \frac{q^2-1}{q+q^{-1}} \left( E_3 E_2 E_3 E_2  - q^{-2} E_3 E_2^2 E_3 - q^{-2} E_2 E_3^3 E_2 + q^{-4} E_2 E_3 E_2 E_3\right) \\
&= (q^2-1)[E_3,E_2]_{q^{-2}}^2 = (q^2-1) \si\big( [E_2,E_3]_{q^{-2}}^2 \big) \\
&=  (q^2-1) ( \si T_2)(E_3^2) \hspace{1mm} =  (q^2-1) T_2^{-1}(E_3^2 )
\end{aligned}
}
so that for the first term of \eqref{sigmatau} we have
\eq{ 
[T_3(E_2),T_4([T_3(E_2),E_2])] = (q^2-1) T_2^{-1} \left( [(T_2 T_3)(E_2),T_4(E_3^2)]\right).
}
The reduced elements $s_3 s_2 s_3$ and $s_3 s_4$ map $\al_2$ to itself and $\al_3$ to $\al_4$, respectively, so that $(T_2 T_3)(E_2) = T_3^{-1}(E_2)$ and $T_4(E_3) = T_3^{-1}(E_4)$ by \cite[Prop.~8.20]{Jan96}.
Hence
\eq{ 
[T_3(E_2),T_4([T_3(E_2),E_2])] = (q^2-1) (T_2^{-1}T_3^{-1}) \left( [E_2,E_4^2]\right) = 0
}
where we have used the q-Serre relation $E_2 E_4 - E_4 E_2 = 0$.
As a consequence, \eqref{sigmatau} yields
\eq{
\begin{aligned}
(\si \tau)((r_1 T_{w_X})(E_1)) &= (1-q^{-4})  [ E_2,[T_3(E_2),(T_4 T_3)(E_2)]_{q^{-4}}]_{q^{-4}} \\
&=  (1-q^{-4})  (T_4 T_3 T_4) \left([ (T_4^{-1} T_3^{-1})(E_2) , [T_3^{-1}(E_2), E_2]_{q^{-4}}]_{q^{-4}} \right) \\
&= (T_4 T_3 T_4 r_1T_{w_X})(E_1)
\end{aligned}
}
where we have used $T_3 T_4 T_3 = T_4 T_3 T_4$.
Because $r_i$ and $T_j$ commute if $a_{ij}=0$ we have $(\si \tau)((r_1 T_{w_X})(E_1))  = (r_1 T_4 T_3 T_4 T_{w_X})(E_1)$ and by virtue of \eqref{TX:E1} the proof is complete.
\end{proof}

%%%%%%%%%%%%%%%%%%%%%%%%%%%%%%%%%%%%%%%%%%%%%%%%%%%%%%%%%%%%%%%%%%%%%%%%%%

\subsection{Quantum Serre relations for the $B_i$} \label{sec:qSerre}

We now introduce some notation in order to discuss q-Serre relations for the $B_i$.
For $x,y \in U_q(\mfg)$ and $i,j \in I$ such that $i \ne j$ recall the shorthand $M_{ij}=1-a_{ij}$ and define
\eq{
F_{ij}(x,y) = \sum_{n=0}^{M_{ij}} (-1)^n {M_{ij} \brack n}_{q_i} x^{M_{ij}-n} y x^n.
}
Here ${M_{ij} \brack n}_{q_i}$ is the $q_i$-deformed binomial coefficient defined in \cite[1.3.3]{Lu94}.
Note that the q-Serre relations for $U_q(\mfg)$ are the relations $F_{ij}(E_i,E_j) = F_{ij}(F_i,F_j) = 0$ for all $i,j \in I$ with $i \ne j$.
We denote $B_i = F_i$ if $i \in X$ and $U_q(\mfn^+_X) = \langle E_i \, | \, i \in X \rangle$.
For $i \in I \backslash X$ we write
\eq{ \label{Yi:def}
Y_i = (r_{\tau(i)} \theta_q)(F_i K_i) K_i^{-1} K_{\tau(i)} = -(r_{\tau(i)} T_{w_X})(E_{\tau(i)}) K_i^{-1} K_{\tau(i)}
}
and we write $[n]_{q_i} = \frac{q_i^n-q_i^{-n}}{q_i-q_i^{-1}}$ for $n \in \Z$, $i \in I$.

According to \cite[Thm.~3.7 (Case 4) and Thm.~3.9]{BK15}, for all $i,j \in I$ such that $i \ne j$ the elements $F_{ij}(B_i,B_j)$ are $U_q(\mfn^+_X)U_q(\mfh^\theta)$-linear combinations of products $B_{i_1} \cdots B_{i_\ell}$ with $i_1, \ldots, i_\ell \in I$ satisfying $\al_{i_1} + \ldots + \al_{i_\ell} < \la_{ij}$.
Moreover, these expressions vanish if $\tau(i) \notin \{i,j\}$ as a consequence of \cite[Thm.~7.3]{Ko14} and if $i \in X$, see \cite[Eq.~(5.20)]{Ko14}.
If $\tau(i)=j \in I \backslash X$ they are given by \cite[Thm.~3.6]{BK15} for all values of $a_{ij}$.
Now suppose $\tau(i)=i \in I \backslash X$.
Then the expressions are given in \cite[Thm.~3.7]{BK15} if $j \in I \backslash (X \cup \{ i \})$ and $a_{ij} \in \{ 0,-1,-2,-3\}$ and in \cite[Thm.~3.8]{BK15} if $j \in X$ and $a_{ij} \in \{ 0,-1,-2\}$.

These theorems cover all generalized Satake diagrams in Table \ref{tab:nonSatake} except \begin{tikzpicture}[baseline=-0.25em,line width=0.7pt,scale=75/100]
\draw[double,->] (.9,0) -- (.95,0);
\draw[line width=.5pt] (.5,0) -- (1,0);
\draw[line width=.5pt] (.5,0.05) -- (1,0.05);
\draw[line width=.5pt] (.5,-0.05) -- (1,-0.05);
\filldraw[fill=black] (.5,0) circle (.1) node[left]{\scriptsize 1};
\filldraw[fill=white] (1,0) circle (.1) node[right]{\scriptsize 2};
\end{tikzpicture}
which we discuss now; without loss of generality we may assume $d_1=3$ and $d_2=1$.
Note that $Y_2 = -r_2(T_1(E_2)) = -q^{-3} (q^3 - q^{-3}) E_1$.
In this case we have, by a direct computation,
\eq{ \label{qSerre:nonweak}
\begin{aligned}
F_{21}(B_2,B_1) &= \left( ([3]_{q}^2 + 1)(B_2^2 B_1 + B_1 B_2^2) -  [4]_{q} ([2]_{q}^2 + 1) B_2 B_1 B_2 \right) q \ga_2 Y_2 + \\
& \qu  - [3]_{q}^2 B_1 (q \ga_2 Y_2)^2 + [2]_{q} B_2^2 \big(r_1(q \ga_2 Y_2) q^6 K_1 + (\si r_1 \si)(q \ga_2 Y_2) q^{-6} K_1^{-1} \big) + \\
& \qu - \frac{[3]_{q}}{(q-q^{-1})(q^3-q^{-3})} q \ga_2 Y_2 \big( r_1(q \ga_2 Y_2) q^9 K_1 + (\si r_1 \si)(q \ga_2 Y_2) q^{-9} K_1^{-1} \big).
\end{aligned}
}

\begin{rmk}
In the formula \cite[Thm.~3.7,Eq.~(3.21)]{BK15} for $a_{ij}=-3$, which deals with the case $j \in I \backslash X$, the sign of the terms quadratic in $B_i$ is incorrect.
This does not affect the later result \cite[Thm.~3.11]{BK15}.
Up to this sign issue, the first terms of \eqref{qSerre:nonweak} directly match \cite[Eq.~(3.21)]{BK15}, so that also in the case $a_{ij}=-3$ we expect a unified expression in the style of \cite[Thm.~3.9]{BK15} in the general Kac-Moody setting.\hfill \rmkend
\end{rmk}

We now review the arguments from \cite{BK15} in terms of $\ga_i$ and $Y_i$ to show that the bar involution exists for general $(X,\tau) \in \GSat(A)$ with $A$ of finite type.
First of all, note that the proof of \cite[Prop.~3.5]{BK15} does not require the specific choice of $s \in \wt H$ given in \cite[Eq.~(3.2)]{BK15} but any $s$ satisfying the weaker constraint \cite[Eqs.~(5.1-5.2)]{BK19}, also see \cite[Rmk.~5.2]{BK19}.
With that qualification, the arguments of the proof of \cite[Prop.~3.5]{BK15} imply that 
\eq{ \label{bar:Y}
\overline{Y_i} = q_i^{(\al_i-w_X(\al_i)-2\rho_X)(h_i)} \zeta(\al_i) Y_{\tau(i)}
}
where $\rho_X \in (\mfh_X)^*$ is the Weyl vector of $\mfg_X$; this specializes to \cite[Prop.~3.5]{BK15} if we set $\ga_i = s(\al_{\tau(i)})c_i$.
Note that Proposition \ref{prop:nui1} is crucial in this context; in the notation of \cite{BK15,BK19}, we have $\nu_i = 1$ for all $i \in I \backslash X$, for all $(X,\tau) \in \GSat(A)$ of finite type.
The proof of \cite[Thm.~3.11]{BK15}, which requires \eqref{qSerre:nonweak} in the special case \nonweak, implies that the existence of the bar involution for $B$ is equivalent to
\eq{ \label{bar:gammaY}
\overline{\ga_i Y_i} = q_i^{\al_{\tau(i)}(h_i)} \ga_{\tau(i)} Y_{\tau(i)}
}
for all $i \in I \backslash X$, which specializes to \cite[Eq.~(3.28)]{BK15} if $\ga_i = s(\al_{\tau(i)})c_i$.
Combining \eqrefs{bar:Y}{bar:gammaY}, the existence of the bar involution for $B$ is equivalent to
\eq{ \label{gamma:bar}
\ga_{\tau(i)} = \zeta(\al_i) q_i^{(\theta(\al_i)-2\rho_X)(h_i)} \overline{\ga_i}
}
where we have used the analogue of \eqref{theta:h} for roots, which can be obtained in the same way.

\begin{rmk}
Suppose $i \in I \backslash X$ is such that $\tau(i)=i$ and $\al_i(\rho^\vee_X) \notin \Z$.
Then \eqref{gamma:bar} simplifies to $\ga_i = -q_i^{(\theta(\al_i)-2\rho_X)(h_i)} \overline{\ga_i}$ so that $\ga_i \to 0$ as $q \to 1$.
For $(X,\tau) \in \WSat(A)$ this is compatible with Proposition \ref{prop:k:weak:ideal} (iv).\hfill \rmkend
\end{rmk}

%%%%%%%%%%%%%%%%%%%%%%%%%%%%%%%%%%%%%%%%%%%%%%%%%%%%%%%%%%%%%%%%%%%%%%%%%%%%%%

\subsection{The universal K-matrix}

Building upon the work in \cite{BK15} on the bar involution, in \cite{BK19} the universal K-matrix $\mc K = \mc K(X,\tau)$ for $B = B(X,\tau)$ is constructed for $(X,\tau) \in \Sat(A)$ and the relations \eqref{Uintw} and \eqref{copr} are derived.
Again we comment on those places in this text with a nontrivial generalization to the setting of quantum pair algebras defined in terms of generalized Satake diagrams.
In addition to \eqref{gamma:bar} we also assume 
\eq{ \label{sigma:bar}
\overline{\si_i} = \si_i,
}
see \cite[Eq.~(5.16)]{BK19}.
In \cite[Proof of Lemma 6.4]{BK19} the defining condition of Satake diagrams is used, but only the defining condition of generalized Satake diagrams is needed.
In \cite[Rmk.~7.2]{BK19} a case-by-case analysis is employed based on the list of Satake diagrams in \cite{Ar62}.
Denoting by $\tau_0$ the diagram automorphism corresponding to the longest element of the Weyl group of $\mfg$, it is derived that $\tau_0$ preserves $X$ and commutes with $\tau$; furthermore one can choose $\bm \ga \in \Gamma$ and $\bm \si \in \Si$ such that $\ga_{\tau(i)} = \ga_{\tau_0(i)}$ and $\si_{\tau(i)} = \si_{\tau_0(i)}$ for all $i \in I \backslash X$ and the above transformation properties with respect to the bar involution hold.
Extending this analysis to Table \ref{tab:nonSatake}, one checks that the remark is valid for all generalized Satake diagrams.

In \cite{Ko20} it is shown that $\mc{K}$ satisfies the axiom \eqref{inBU} for a universal K-matrix and the centre of $B$ is described in terms of $\mc{K}$, without using the defining condition of Satake diagrams or a case-by-case analysis; it follows that the results remain valid for $(X,\tau) \in \GSat(A)$.

This is also the case for \cite[Section 2]{DK19} which entails an analysis of the restricted Weyl group and restricted root system following \cite{Lu76} in order to establish a factorization of the quasi K-matrix (subject to a condition on Satake diagrams of restricted rank 2).
In particular, from the statement $\tau_0(X)=X$ for any $(X,\tau) \in \Sat(A)$ it is inferred that $\tau_{0,X[i]}(X)=X$ for all $i \in I^*$ and hence $\{ w_X w_{X[i]} \, | \, i \in I^*\}$ is a Coxeter system for the group it generates.
For all generalized Satake diagrams these conclusions follow directly from Theorem \ref{thrm:Heck}.

%%%%%%%%%%%%%%%%%%%%%%%%%%%%%%%%%%%%%%%%%%%%%%%%%%%%%%%%%%%%%%%%%%%%%%%%%%%%%%

\paragraph{\bf Acknowledgments}

The authors thank A.~Appel, M.~Balagovi\'c, S.~Kolb, J.~Stokman, W.~Wang and an anonymous referee for useful comments and discussions.
V.R.~was supported by the European Social Fund, grant number 09.3.3-LMT-K-712-02-0017.
B.V.~was supported by the Engineering and Physical Sciences Research Council, grant numbers EP/N023919/1 and EP/R009465/1.
The authors gratefully acknowledge the financial support.

%%%%%%%%%%%%%%%%%%%%%%%%%%%%%%%%%%%%%%%%%%%%%%%%%%%%%%%%%%%%%%%%%%
% Appendices
%%%%%%%%%%%%%%%%%%%%%%%%%%%%%%%%%%%%%%%%%%%%%%%%%%%%%%%%%%%%%%%%%%

\appendix 

\section{Deriving Serre relations for $\mfk$} \label{appendix}

The following three technical lemmas are used to derive the key equation \eqref{k:Serre}.
It is convenient to introduce the notation $Q^+_X := Q^+ \cap Q_X = \sum_{i \in X} \Z_{\ge 0} \al_i$.

\begin{lemma} \label{lem:biij:a}
Fix $(X,\tau) \in \CD(A)$, $\bm \ga \in \C^{I \backslash X}$, $i \in X$ and $j \in I$.
For all $m \in \Z_{\ge 1}$ we have
\eq{
\ad(b_i)^m(b_j) = 
\begin{cases} 
\ad(f_i)^m(f_j) + \ga_j \,\theta\left( \ad(f_i)^m(f_j) \right) & \text{if } j \in I \backslash X, \\
\ad(f_i)^m(f_j) & \text{if } j \in X.
\end{cases}
}
\end{lemma}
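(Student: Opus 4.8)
The plan is to exploit the hypothesis $i \in X$, which collapses the problem to checking that $\theta$ commutes with $\ad(f_i)$. By the convention fixed immediately after Definition~\ref{defn:k}, $i \in X$ gives $b_i = f_i$, hence $\ad(b_i) = \ad(f_i)$. Since $f_i \in \mfg_X$, equation~\eqref{theta:basic} yields $\theta(f_i) = f_i$.

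The crucial observation is then that $\theta$ is a Lie algebra automorphism, so $\theta \circ \ad(x) = \ad(\theta(x)) \circ \theta$ for every $x \in \mfg$; specializing to $x = f_i$ and using $\theta(f_i) = f_i$ gives $\theta \circ \ad(f_i) = \ad(f_i) \circ \theta$. Iterating this $m$ times produces the identity $\theta\big(\ad(f_i)^m(y)\big) = \ad(f_i)^m\big(\theta(y)\big)$ for all $y \in \mfg$ and all $m \in \Z_{\ge 1}$.

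Finally I would split according to whether $j$ lies in $X$. If $j \in X$ then $b_j = f_j$ and $\ad(b_i)^m(b_j) = \ad(f_i)^m(f_j)$, with nothing further to prove. If $j \in I \backslash X$ then $b_j = f_j + \ga_j\,\theta(f_j)$ by~\eqref{bi:def}, and linearity of $\ad(f_i)^m$ together with the commutation identity applied to $y = f_j$ gives
\[
\ad(b_i)^m(b_j) = \ad(f_i)^m(f_j) + \ga_j\,\ad(f_i)^m\big(\theta(f_j)\big) = \ad(f_i)^m(f_j) + \ga_j\,\theta\big(\ad(f_i)^m(f_j)\big),
\]
which is exactly the asserted formula. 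There is no real obstacle here: the whole argument rests on $\theta$ fixing $\mfg_X$ pointwise together with its being an automorphism, so the only computation is the trivial iteration of a single commutation relation and no nontrivial induction is required.
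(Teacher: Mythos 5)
Your argument is correct and is precisely the paper's own proof, merely written out in full: the paper's one-line justification ("Because $\theta$ is a Lie algebra automorphism this follows immediately from \eqref{theta:basic}") is exactly your combination of $\theta(f_i)=f_i$ for $i\in X$ with the automorphism property to commute $\theta$ past $\ad(f_i)^m$. No further comment is needed.
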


\begin{proof}
Because $\theta$ is a Lie algebra automorphism this follows immediately from \eqref{theta:basic} 
\end{proof}

\begin{lemma} \label{lem:biij:b}
Fix $(X,\tau) \in \CD(A)$, $\bm \ga \in \C^{I \backslash X}$, $i \in I \backslash X$ and $j \in X$.
For all $m \in \Z_{\ge 1}$ we have
\eq{
\ad(b_i)^m(b_j) = \ad(f_i)^m(f_j) + \ga_i^m\, \theta\left( \ad(f_i)^m(f_j) \right) + L_m
}
where
\eq{
L_m = 
\begin{cases} 
(1+\zeta(\al_i)) \,\ga_i\, [\theta(f_i),[f_i,f_j]] \in \mfn_X^+ & \text{if } \tau(i)=i, \, w_X(\al_i)-\al_i-\al_j \in \Phi^+, \, m=2,  \\
-\ga_i\, (2h_i-a_{ij}h_j) & \text{if } \tau(i)=i, \, w_X(\al_i)=\al_i+\al_j, \, m=2, \\
-3\,(2+a_{ij}) \,\ga_i\, (f_i - \theta(f_i)) & \text{if } \tau(i)=i, \, w_X(\al_i)=\al_i+\al_j, \, m=3, \\
-6\,a_{ij}(2+a_{ij}) \,\ga_i^2\, e_j & \text{if } \tau(i)=i, \, w_X(\al_i)=\al_i+\al_j, \, m=4, \\
0 & \text{otherwise}.
\end{cases}
\hspace{-2mm} 
}
\end{lemma}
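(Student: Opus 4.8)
The plan is to induct on $m$, expanding $\ad(b_i) = \ad(f_i) + \ga_i\ad(\theta(f_i))$. Write $u_m = \ad(f_i)^m(f_j)$ and, since $j \in X$ forces $\theta(f_j) = f_j$ by \eqref{theta:basic}, set $v_m = \theta(u_m) = \ad(\theta(f_i))^m(f_j)$; the two ``main terms'' $u_m + \ga_i^m v_m$ are precisely the contributions of the all-$f_i$ and all-$\theta(f_i)$ words in the expansion of $\ad(b_i)^m(b_j)$, so $L_m$ collects the genuinely mixed words. A one-line computation then gives the recursion
\[
L_{m+1} = \ga_i[\theta(f_i),u_m] + \ga_i^m[f_i,v_m] + [f_i,L_m] + \ga_i[\theta(f_i),L_m], \qquad L_1 = 0 .
\]
Everything reduces to evaluating the two cross-terms $[\theta(f_i),u_m]$ and $[f_i,v_m]$, which I would control entirely by the $Q$-grading.

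First I would dispose of $\tau(i)\ne i$: here $\theta(\al_i) = -w_X(\al_{\tau(i)})$ carries $\al_{\tau(i)}$ in its support, so each cross-term weight has a strictly negative $\al_i$-coefficient together with a strictly positive $\al_{\tau(i)}$-coefficient; as every root is sign-coherent, such a weight is never a root, both cross-terms vanish, and the recursion forces $L_m = 0$ for all $m$. For $\tau(i)=i$ I would write $w_X(\al_i) = \al_i + \gamma$ with $\gamma \in Q^+_X$ (so $\theta(\al_i) = -\al_i-\gamma$); the weights of $[\theta(f_i),u_m]$ and $[f_i,v_m]$ are then $(1-m)\al_i + (\gamma-\al_j)$ and $(m-1)\al_i + (m\gamma-\al_j)$. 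Sign-coherence again kills all cross-terms unless $\gamma-\al_j \in \Phi^+\cup\{0\}$, i.e. unless $\gamma > \al_j$ (case 1) or $\gamma = \al_j$ (cases 2--4), every other configuration falling into ``otherwise''.

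In case 1 only $m=2$ survives, and $L_2 = \ga_i([\theta(f_i),u_1] + [f_i,v_1])$ with both summands in $\mfn^+_X$ of weight $\gamma-\al_j$. Such an element is fixed by $\theta$, while $\theta([f_i,v_1]) = \zeta(\al_i)[\theta(f_i),u_1]$ by $\theta^2 = \Ad(\zeta)$ together with $\zeta(\al_j)=1$ for $j\in X$; hence the two summands agree up to the scalar $\zeta(\al_i)$ and $L_2 = (1+\zeta(\al_i))\ga_i[\theta(f_i),[f_i,f_j]]$. A further weight check, using finiteness of $\Phi$, then kills $L_m$ for $m\ge 3$.

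The real work is the case $\gamma = \al_j$, i.e. $w_X(\al_i) = \al_i+\al_j$. Since $w_X(\al_i)-\al_i$ is a single simple root, the support argument forces $X(i) = \{j\}$ and $a_{ji} = -1$, so $\theta(f_i) = -\Ad(s_j)(e_i) \in \mfg_{\al_i+\al_j}$ and the entire calculation takes place inside the rank-two subalgebra $\mfg_{\{i,j\}}$, of type ${\rm A}_2$, ${\rm B}_2/{\rm C}_2$ or ${\rm G}_2$ according as $a_{ij} = -1,-2,-3$. Computing $\ad(b_i)^m(b_j)$ within this finite root system yields the explicit $L_2,L_3,L_4$ and the vanishing $L_m = 0$ for $m\ge 5$, the factor $(2+a_{ij})$ emerging as the $\ad(h_i)$-eigenvalue $(\al_i+\al_j)(h_i)$ on $\theta(f_i)$. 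I expect this rank-two bookkeeping of constants, and confirming the uniform vanishing beyond $m=4$, to be the \emph{main obstacle}; the sign-coherence arguments above dispatch all other configurations cleanly.
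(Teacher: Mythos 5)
Your proposal is correct and follows essentially the same route as the paper: induction on $m$ via the recursion for $L_m$, sign-coherence of roots to kill the cross-terms except when $\tau(i)=i$ and $w_X(\al_i)-\al_i-\al_j \in \Phi^+\cup\{0\}$, and an explicit low-rank computation in the surviving cases. The only refinement the paper adds is to rewrite the second cross-term as $\zeta(\al_i)\,\theta\bigl([\theta(f_i),\ad(f_i)^{m-1}(f_j)]\bigr)$, so that its vanishing for $m\ge 3$ in your case 1 follows at once from that of the first cross-term; your ``further weight check'' succeeds for the same reason --- the two weights are exchanged by $\theta \in \Aut(\Phi)$ --- rather than by finiteness of $\Phi$.
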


\begin{proof}
By induction with respect to $m$.
For $m=1$, \eqref{theta:basic} implies
\eq{
\ad(b_i)^1(b_j) = [f_i + \ga_i\,\theta(f_i),f_j] = \ad(f_i)^1(f_j) + \ga_i^1\, \theta\left( \ad(f_i)(f_j) \right) + L_1
}
with $L_1=0$ as required.
Now assume $m \in \Z_{>1}$ and suppose the statement holds for all smaller values.
Then, by virtue of the induction hypothesis, the fact that $\theta$ is a Lie algebra automorphism and \eqref{theta:basic}, we find
\eq{
\begin{aligned}
\ad(b_i)^m(b_j) &= \left[b_i,\ad(b_i)^{m-1}(b_j)\right] \\
&= \left[f_i + \ga_i\, \theta(f_i), \ad(f_i)^{m-1}(f_j) + \ga_i^{m-1} \theta\left( \ad(f_i)^{m-1}(f_j) \right) + L_{m-1}\right] \\
&= \ad(f_i)^m(f_j) + \ga_i^m \theta\left( \ad(f_i)^m(f_j)  \right) \\
& \qu + \ga_i \left[\theta(f_i),\ad(f_i)^{m-1}(f_j) \right] + \ga_i^{m-1} \left[f_i,\theta\left( \ad(f_i)^{m-1}(f_j) \right) \right] +  \left[b_i,L_{m-1}\right].
\hspace{-2mm}
\end{aligned}
}
Using \eqref{theta:basic} we have $\theta^2(f_i)=\zeta(\al_i) f_i$ so that
\eq{ 
\label{recursion:b}
L_m = \ga_i \left[\theta(f_i), \ad(f_i)^{m-1}(f_j) \right] + \zeta(\al_i) \ga_i^{m-1} \theta\left( \left[\theta(f_i), \ad(f_i)^{m-1}(f_j) \right] \right) + [b_i, L_{m-1}].
}
Suppose that $\left[\theta(f_i), \ad(f_i)^{m-1}(f_j) \right] \ne 0$.
Then $w_X(\al_{\tau(i)})-(m-1)\,\al_i-\al_j \in \Phi \cup \{0\}$.
Now $\Phi = \Phi^+ \cup \Phi^-$ implies that $\tau(i)=i$ and $j \in X(i)$.

First we consider the case $w_X(\al_i)-(m-1)\,\al_i-\al_j  \in \Phi^+$.
Then $m=2$ and since $w_X(\al_i) - \al_i - \al_j \in Q^+_X$ it follows that $[\theta(f_i),[f_i,f_j]] \in \mfn^+_X$.
The claimed expression for $L_2$ follows immediately from \eqref{recursion:b} and those for $L_m$ with $m>2$ from \eqref{k:rels1}.

If $w_X(\al_i)-(m-1)\,\al_i-\al_j \in \Phi^- \cup \{0\}$ then $w_X(\al_i) \le (m-1)\, \al_i + \al_j$.
Hence $X(i)=\{j\}$ and we obtain 
\eq{
w_X(\al_i) - (m-1)\, \al_i - \al_j = (2-m)\,\al_i - (1+a_{ji})\,\al_j.
}
From $\Phi = \Phi^+ \cup \Phi^-$ it follows that $a_{ji} = -1$.
Now $\Z \al_i \cap \Phi = \{ \pm \al_i \}$ implies that $m \in \{ 2,3 \}$.
Straightforwardly we compute 
\eq{
\left[\theta(f_i), \ad(f_i)(f_j) \right] = a_{ij} h_j - h_i, \qq \left[\theta(f_i), \ad(f_i)^2(f_j) \right] = -2(1+a_{ij}) f_i,
} 
from which the claimed expressions for $L_m$ readily follow.
\end{proof}

Recall the integer $p_{ij}^{(r,m)}$ defined by \eqref{pij:def}.

\begin{lemma} \label{lem:biij:c}
Fix $(X,\tau) \in \CD(A)$, $\bm \ga \in \C^{I \backslash X}$ and $i,j \in I \backslash X$ such that $i \ne j$.
Recall the integer $p_{ij}^{(r,m)}$ defined by \eqref{pij:def}.
For all $m \in \Z_{\ge 0}$ we have
\eq{ \label{biij:c}
\ad(b_i)^m(b_j) = \ad(f_i)^m(f_j) + \ga_i^m \ga_j \theta\left( \ad(f_i)^m(f_j) \right) + L_m
}
where
\eq{
L_m = 
\begin{cases} 
\left( \ga_i + \zeta(\al_i) \ga_j \right) [\theta(f_i),f_{j}] \in \mfn^+_X & \text{if } \tau(i)=j, \, w_X(\al_i)-\al_i \in \Phi^+, \, m=1, \\
\ga_j h_i - \ga_i h_j & \text{if } \tau(i)=j, \, w_X(\al_i) = \al_i, \, m=1, \\
2\left( (\ga_j - a_{ij} \ga_i)  f_i - \ga_i (\ga_i -a_{ij} \ga_j)  e_j \right) & \text{if } \tau(i)=j, \, w_X(\al_i) = \al_i, \, m=2, \\
\displaystyle\sum_{r=1}^{\lfloor m/2 \rfloor} p_{ij}^{(r,m)} \ga_i^r \ad(b_i)^{m-2r}(b_j) & \text{if } \tau(i)=i, \, w_X(\al_i) = \al_i, \\
0 & \text{otherwise}.
\end{cases}
}
\end{lemma}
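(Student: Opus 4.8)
The plan is to establish \eqref{biij:c} by induction on $m$, writing throughout $P_n:=\ad(f_i)^n(f_j)$. The base case $m=0$ is immediate, since $\ad(b_i)^0(b_j)=b_j=f_j+\ga_j\,\theta(f_j)$ and $L_0=0$. For the inductive step I would expand $\ad(b_i)^m(b_j)=[b_i,\ad(b_i)^{m-1}(b_j)]$, insert the inductive hypothesis, and use $b_i=f_i+\ga_i\,\theta(f_i)$. Because $\theta$ is a Lie algebra automorphism and $\theta^2=\Ad(\zeta)$ by \eqref{theta:basic} (so $f_i=\zeta(\al_i)\theta^2(f_i)$), the principal terms collect into $P_m+\ga_i^m\ga_j\,\theta(P_m)$ and the remainder obeys the master recursion
\[
L_m=\ga_i\,Z_{m-1}+\zeta(\al_i)\,\ga_i^{m-1}\ga_j\,\theta(Z_{m-1})+[b_i,L_{m-1}],\qquad Z_{m-1}:=[\theta(f_i),P_{m-1}].
\]

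Next I would determine when $Z_{m-1}\ne0$. Its weight is $-\theta(\al_i)-(m-1)\al_i-\al_j$, and \eqref{theta:defn} gives $\theta(\al_i)=-w_X(\al_{\tau(i)})=-\al_{\tau(i)}-\beta$ with $\beta\in Q^+_X$. Comparing coefficients against $\Phi=\Phi^+\cup\Phi^-$ shows that this weight can be a root or zero only when $\tau(i)\in\{i,j\}$; otherwise the coefficients of $\al_{\tau(i)}$ and $\al_j$ have opposite signs, so $Z_{m-1}=0$ and hence $L_m=0$ (the final ``otherwise'' case). This leaves the cases $\tau(i)=j$ and $\tau(i)=i$, each split by whether $w_X(\al_i)=\al_i$.

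I would then dispatch the surviving cases. If $\tau(i)=j$ and $w_X(\al_i)-\al_i\in\Phi^+$, a weight count kills $Z_{m-1}$ for $m\ge2$, while $Z_0=[\theta(f_i),f_j]\in\mfg_{w_X(\al_j)-\al_j}\subseteq\mfn^+_X\subseteq\mfg_X$; since $\theta|_{\mfg_X}=\id$ by \eqref{theta:basic}, we get $\theta(Z_0)=Z_0$ and thus $L_1=(\ga_i+\zeta(\al_i)\ga_j)[\theta(f_i),f_j]$, and a short bracket computation (the pertinent higher weights not being roots) gives $[b_i,L_1]=0$. If $\tau(i)=j$ and $w_X(\al_i)=\al_i$, orthogonality of $\al_i$ to $X$ gives $\theta(f_i)=-\Ad(w_X)(e_j)=-e_j$ and $\zeta(\al_i)=1$, reducing $Z_0$ and $Z_1$ to explicit elements of $\mfh$ and $\C f_i$; feeding these into the recursion reproduces the stated $L_1,L_2$, and $[b_i,L_2]=0$ handles $m\ge3$ (here $a_{ij}\in\{0,-1\}$, so $P_m=0$ for $m\ge2$). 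The principal case is $\tau(i)=i$, $w_X(\al_i)=\al_i$, where $\theta(f_i)=-e_i$, so $Z_{m-1}=-[e_i,P_{m-1}]$ is evaluated by the standard $\mfsl_2$-string identity $[e_i,\ad(f_i)^{n}(f_j)]=n(M_{ij}-n)\,\ad(f_i)^{n-1}(f_j)$, valid because $f_j$ is a highest weight vector of $h_i$-weight $M_{ij}-1$ for the triple $\langle e_i,h_i,f_i\rangle$.

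The decisive and most delicate step is closing the induction in this last case. Substituting the $\mfsl_2$ evaluation and rewriting $\ga_iP_{m-2}+\ga_i^{m-1}\ga_j\theta(P_{m-2})=\ga_i\bigl(\ad(b_i)^{m-2}(b_j)-L_{m-2}\bigr)$ (by the hypothesis at $m-2$) turns the master recursion into
\[
L_m=-(m-1)(M_{ij}+1-m)\,\ga_i\,\ad(b_i)^{m-2}(b_j)+(m-1)(M_{ij}+1-m)\,\ga_i\,L_{m-2}+[b_i,L_{m-1}].
\]
Inserting the inductive expressions for $L_{m-1}$ and $L_{m-2}$, pulling $\ad(b_i)$ through the first and reindexing the second, the coefficient of $\ga_i^r\,\ad(b_i)^{m-2r}(b_j)$ comes out as $p_{ij}^{(r,m-1)}+(m-1)(M_{ij}+1-m)\,p_{ij}^{(r-1,m-2)}$, which equals $p_{ij}^{(r,m)}$ by the defining recursion \eqref{pij:def} (the $r=1$ boundary using $p_{ij}^{(0,m-2)}=-1$). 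I expect this bookkeeping, together with the positivity arguments forcing the off-diagonal remainders to vanish, to be the crux; the remaining case computations are routine applications of \eqrefs{g:rel1}{g:Serre} and \eqref{theta:basic}.
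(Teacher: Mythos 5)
Your proposal is correct and follows essentially the same route as the paper's own proof: induction on $m$ via the recursion $L_m=\ga_i[\theta(f_i),\ad(f_i)^{m-1}(f_j)]+\zeta(\al_i)\ga_i^{m-1}\ga_j\,\theta([\theta(f_i),\ad(f_i)^{m-1}(f_j)])+[b_i,L_{m-1}]$, a weight analysis of $w_X(\al_{\tau(i)})-(m-1)\al_i-\al_j$ against $\Phi=\Phi^+\cup\Phi^-$ to isolate the cases $\tau(i)\in\{i,j\}$, and in the main case the $\mathfrak{sl}_2$-string evaluation feeding the recursion \eqref{pij:def} for $p_{ij}^{(r,m)}$. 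The only cosmetic difference is that the paper kills $[b_i,L_1]$ in the first case by invoking \eqref{k:rels1} (i.e.\ $[b_i,\mfn_X^+]=0$) rather than a weight count, which is the cleaner justification there.
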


\begin{proof}
As before we apply induction with respect to $m$.
For $m=0$ we have
\eq{
\ad(b_i)^0(b_j) = b_j = f_j + \ga_j\, \theta(f_j) = \ad(f_i)^0(f_j) + \ga_i^0 \ga_j\, \theta\left( \ad(f_i)^0(f_j) \right) + L_0
}
with $L_0=0$ as required.
Now assume $m \in \Z_{>0}$ and suppose the statement holds for all smaller values.
Then, by the induction hypothesis,
\eq{
\begin{aligned} 
\ad(b_i)^{m}(b_j) &= [b_i,\ad(b_i)^{m-1}(b_j)] \\
&= [f_i + \ga_i \theta(f_i), \ad(f_i)^{m-1}(f_j) + \ga_i^{m-1} \ga_j \theta\left( \ad(f_i)^{m-1}(f_j) \right) + L_{m-1}].
\end{aligned}
}
Since $\theta$ is a Lie algebra automorphism, rearranging terms yields \eqref{biij:c} with $L_m \in \mfg$ defined by 
\eq{
L_m = \ga_i \left[\theta(f_i), \ad(f_i)^{m-1}(f_j) \right] + \ga_i^{m-1} \ga_j \left[f_i,\theta\left( \ad(f_i)^{m-1}(f_j) \right) \right] +  [b_i, L_{m-1}].
}
Using \eqref{theta:basic} we obtain
\eq{ 
\label{recursion:c}
L_m = \ga_i \left[\theta(f_i), \ad(f_i)^{m-1}(f_j) \right] + \zeta(\al_i) \ga_i^{m-1} \ga_j\, \theta\left( \left[\theta(f_i), \ad(f_i)^{m-1}(f_j) \right] \right) +   [b_i, L_{m-1}].
}
If $\left[\theta(f_i), \ad(f_i)^{m-1}(f_j) \right] \ne 0$ then $w_X(\al_{\tau(i)})-(m-1)\,\al_i-\al_j  \in \Phi \cup \{0\}$.

If $w_X(\al_{\tau(i)})-(m-1)\al_i-\al_j  \in \Phi^+$ we must have $j=\tau(i)$, $X(i) \ne \emptyset$, $m=1$; since $w_X(\al_{\tau(i)})-\al_j   \in Q^+_X$ it follows that $[\theta(f_i),f_j] \in \mfn_X^+$.
The expression for $L_1$ follows from \eqref{recursion:c}; $L_m=0$ with $m>1$ is a consequence of \eqref{k:rels1}.

Now suppose $w_X(\al_{\tau(i)})-(m-1)\,\al_i-\al_j  \in \Phi^- \cup \{0\}$.
It follows that $X(i) = \emptyset$, so $\al_i(\rho^\vee_X) \in \Z$, and $\tau(i) \in \{i,j\}$.
If $\tau(i)=j$ then $\Z \al_i \cap \Phi = \{ \pm \al_i \}$ implies that $m \in \{ 1,2 \}$.
Furthermore, $\theta(f_i)=-e_{j}$ and $a_{ij}=a_{ji}$.
From a successive application of \eqref{recursion:c} we obtain $L_1= \ga_j  h_i -  \ga_i h_{j}$, $L_2 = 2 \left((\ga_j - a_{ij} \ga_i ) f_i  - \ga_i ( \ga_i - a_{ij} \ga_j ) e_{j} \right)$ and $L_m=0$ if $m>2$, as required.

It remains to deal with the case $X(i) = \emptyset$ and $\tau(i)=i$, in which case $\theta(f_i)=-e_i$.
A straightforward computation gives
\[ 
[\theta(f_i),\ad(f_i)^{m-1}(f_j)] = (m-1)(m-2+a_{ij}) \ad(f_i)^{m-2}(f_j).
\]
By virtue of the induction hypothesis, \eqref{recursion:c} simplifies to
\[
L_m = (m-1)(m-1-M_{ij}) \ga_i \left( \ad(b_i)^{m-2}(b_j) - L_{m-2} \right) + [b_i,L_{m-1}],
\]
from which the desired expression follows straightforwardly.
\end{proof}

%%%%%%%%%%%%%%%%%%%%%%%%%%%%%%%%%%%%%%%%%%%%%%%%%%%%%%%%%%%%%%%%%%
% Bibliography
%%%%%%%%%%%%%%%%%%%%%%%%%%%%%%%%%%%%%%%%%%%%%%%%%%%%%%%%%%%%%%%%%%

\enlargethispage{1em}


\begin{thebibliography}{DKM03}

\setlength{\parskip}{-0.25ex}

\bibitem[Ar62]{Ar62}
	Sh.~Araki,
	{\it On root systems and an infinitesimal classification of irreducible symmetric spaces}.
	J.~Math.~Osaka City Univ.~{\bf 13}, no.~1 (1962): 1--34.

\bibitem[BB10]{BB10}
	P.~Baseilhac, S.~Belliard,
	{\it Generalized q-Onsager algebras and boundary affine Toda field theories}.
	Lett.~Math.~Phys.~{\bf 93} (2010): 213--228.
	[\href{https://arxiv.org/abs/0906.1215}{\tt arXiv:0906.1215}].

\bibitem[BK15]{BK15}
	M.~Balagovi\'{c}, S.~Kolb,
	{\it The bar involution for quantum symmetric pairs}.
	Rep.~Thy.~of the Amer.~Math.~Soc.~{\bf 19}, no.~8 (2015): 186--210.
	[\href{https://arxiv.org/abs/1409.5074}{\tt arXiv:1409.5074}].

\bibitem[BK19]{BK19}
	M.~Balagovi\'{c}, S.~Kolb,
	{\it Universal $K$-matrix for quantum symmetric pairs}.
	Journal f\"{u}r die reine und angewandte Mathematik (Crelles Journal), no.~747 (2019): 299--353.
	[\href{https://arxiv.org/abs/1507.06276}{\tt arXiv:1507.06276}].

\bibitem[BW18a]{BW18a}
	H.~Bao, W.~Wang,
	{\it A new approach to Kazhdan-Lusztig theory of type B via quantum symmetric pairs}.
	Soc\'{e}t\'{e} math\'{e}matique de France (2018).
	[\href{https://arxiv.org/abs/1310.0103}{\tt arXiv:1310.0103}].

\bibitem[BW18b]{BW18b}
	H.~Bao, W.~Wang,
	{\it Canonical bases arising from quantum symmetric pairs}.
	Invent.~Math.~{\bf 213}, no.~3 (2018): 1099--1177.
	[\href{https://arxiv.org/abs/1610.09271}{\tt arXiv:1610.09271}].

\bibitem[BW18c]{BW18c}
	H.~Bao, W.~Wang,
	{\it Canonical bases arising from quantum symmetric pairs of Kac-Moody type}.
	Preprint at 
	[\href{https://arxiv.org/abs/1811.09848}{\tt arXiv:1811.09848}].

\bibitem[DKM03]{DKM03}
	J.~Donin, P.~Kulish, A.~Mudrov,
	{\it On a universal solution to the reflection equation}.
	Lett.~Math.~Phys.~{\bf 63} (2003): 179--194.
	[\href{https://arxiv.org/abs/math/0210242}{\tt arXiv:math/0210242}].

\bibitem[DK19]{DK19}
	L.~Dobson, S.~Kolb,
	{\it Factorisation of quasi K-matrices for quantum symmetric pairs}.
	Sel.~Math.~{\bf 25}, no.~4 (2019): 63.
	[\href{https://arxiv.org/abs/1804.02912}{\tt arXiv:1804.02912}].

\bibitem[Dr87]{Dr87}
	V.G.~Drinfeld,
	{\it Quantum groups}.
	Proc.~ICM 1986, Amer.~Math.~Soc.~(1987): 798--820.

\bibitem[ES13]{ES13}
	M.~Ehrig, C.~Stroppel,
	{\it Nazarov-Wenzl algebras, coideal subalgebras and categorified skew Howe duality}.
	Adv.~Math.~{\bf 331} (2018): 58--142.
	[\href{https://arxiv.org/abs/1310.1972}{\tt arXiv:1310.1972}].

\bibitem[He84]{He84}
	A.~Heck,
	{\it Involutive automorphisms of root systems}.
	J.~Math.~Soc.~Japan {\bf 36}, no.~4 (1984): 643--658.

\bibitem[Jac62]{Jac62}
	N.~Jacobson,
	{\it A note on automorphisms of Lie algebras}.
	Pac.~J.~of Math.~{\bf 12}, no.~1 (1962): 303--315.

\bibitem[Jan96]{Jan96}
	J.C.~Jantzen,
	{\it Lectures on quantum groups}.
	Grad.~Stud.~Math., vol.~6, Amer.~Math.~Soc.~(1996).

\bibitem[Ji85]{Ji85}
	M.~Jimbo,
	{\it A q-analogue of $U(\mfg)$ and the Yang-Baxter equation}.
	Lett.~Math.~Phys.~{\bf 11} (1985): 63--69.

\bibitem[Ka90]{Ka90}
	M.~Kashiwara, 
	{\it Crystallizing the $q$-analogue of universal enveloping algebras}.
	Comm.~Math.~Phys.~{\bf 133}, no.~2 (1990): 249--260.

\bibitem[Ko14]{Ko14}
	S.~Kolb,
	{\it Quantum symmetric Kac-Moody pairs}.
	Adv.~Math.~{\bf 267} (2014): 395--469.
	[\href{https://arxiv.org/abs/1207.6036}{\tt arXiv:1207.6036}].

\bibitem[Ko20]{Ko20}
	S.~Kolb,
	{\it Braided module categories via quantum symmetric pairs}.
	Proc.~of the London Math.~Soc.~{\bf 121}, no.~1 (2020): 1--31.
	[\href{https://arxiv.org/abs/1705.04238}{\tt arXiv:1705.04238}].

\bibitem[KR90]{KR90}
	N.~Kirillov, N.~Reshetikhin,
	{\it q-Weyl group and a multiplicative formula for universal R-matrices}.
	Comm.~Math.~Phys.~{\bf 134} (1990): 421--431.

\bibitem[KS09]{KS09}
	S.~Kolb, J.~Stokman,
	{\it Reflection equation algebras, coideal subalgebras and their centres}.
	Sel.~Math.~{\bf 15}, no.~4 (2009): 621.
	[\href{https://arxiv.org/abs/0812.4459}{arXiv:0812.4459}].

\bibitem[Le99]{Le99}
	G.~Letzter,
	{\it Symmetric Pairs for Quantized Enveloping Algebras}.
	J.~Algebra {\bf 220} (1999): 729--767.
	
\bibitem[Le02]{Le02}
	G.~Letzter,
	{\it Coideal Subalgebras and Quantum Symmetric Pairs}.
	New Directions in Hopf Algebras, MSRI publications 43, Cambridge University Press (2002): 117--166.
	[\href{https://arxiv.org/abs/math/0103228}{\tt arXiv:math/0103228}].
	
\bibitem[Le03]{Le03}
	G.~Letzter,
	{\it Quantum Symmetric Pairs and Their Zonal Spherical Functions}.
	Transformation Groups {\bf 8}, no.~3 (2003): 261--292
	[\href{https://arxiv.org/abs/math/0204103}{\tt arXiv:math/0204103}].

\bibitem[LS90]{LS90}
	S.Z.~Levendorski\u{\i}, Ya.S. Soibelman,
	{\it Some applications of the quantum Weyl groups}.
	J.~Geom.~Phys.~{\bf 7} (1990): 241--254.

\bibitem[Lu76]{Lu76}
	G.~Lusztig,
	{\it Coxeter orbits and eigenspaces of Frobenius}.
	Invent.~Math.~{\bf 28} (1976): 101--159.

\bibitem[Lu94]{Lu94}
	G.~Lusztig,
	{\it Introduction to quantum groups}.
	Birkh\"auser, Boston, 1994.

\bibitem[Lu02]{Lu02}
	G.~Lusztig,
	{\it Hecke algebras with unequal parameters}.
	CRM Monographs Ser., vol.~18, Amer.~Math.~Soc.,
	enlarged and updated version at 
	[\href{https://arxiv.org/abs/math/0208154}{\tt arXiv:math/0208154}].

\bibitem[Mu02]{Mu02}
	A.~Mudrov, 
	{\it Characters of $U_q(\mfgl(n))$-reflection equation algebra}.
	Lett.~Math.~Phys.~{\bf 60}, no.~3 (2002): 283--291.
	[\href{https://arxiv.org/abs/math/0204296}{\tt arXiv:math/0204296}].

\bibitem[NDS95]{NDS95}
	M.~Noumi, M.S.~Dijkhuizen, T.~Sugitani,
	{\it Multivariable Askey-Wilson polynomials and quantum complex Grassmannians}, 
	AMS Fields Inst.~Commun.~{\bf 14} (1997): 167-177.
	[\href{https://arxiv.org/abs/q-alg/9603014}{\tt arXiv:q-alg/9603014}].

\bibitem[NS95]{NS95} 
	M.~Noumi, T.~Sugitani, 
	{\it Quantum symmetric spaces and related q-orthogonal polynomials}, in: Group Theoretical Methods in Physics (ICGTMP) (Toyonaka, Japan, 1994),
	World Sci.~Pub., River Edge, NJ, (1995): 28--40.
	[\href{https://arxiv.org/abs/math/9503225}{\tt arXiv:math/9503225}].

\bibitem[Sa71]{Sa71}
	I.~Satake,
	{\it Classification theory on semi-simple algebraic groups},
	Lecture notes in pure and applied mathematics, vol.~3, Dekker, New York (1971).

\bibitem[St19]{St19}
	J.~Stokman,
	{\it Generalized Onsager algebras},
	Algebras and Rep.~Theory (2019): 1--19.
	[\href{https://arxiv.org/abs/1810.07408}{\tt arXiv:1810.07408}].

\end{thebibliography}
\end{document}